\renewcommand*\libertine@figurestyle{LF}
\renewcommand*\libertine@figurestyle{OsF}
\def\csname ver@etex.sty\endcsname{3000/12/31}
\theoremstyle{plain}
\newtheorem{theorem}{Theorem}[section]
\newtheorem{proposition}[theorem]{Proposition}
\newtheorem{lemma}[theorem]{Lemma}
\newtheorem{corollary}[theorem]{Corollary}
\theoremstyle{definition}
\newtheorem{definition}[theorem]{Definition}
\theoremstyle{remark}
\newtheorem{remark}[theorem]{Remark}
\DeclareMathOperator{\im}{Im}
\DeclareMathOperator{\Id}{Id}
\def\R{\mathbb{R}}	
\renewcommand{\leq}{\leqslant} 		
\renewcommand{\geq}{\geqslant}
\let\ker\relax
\DeclareMathOperator{\ker}{Ker}
\DeclareMathDelimiter\llbracket{\mathopen}{stmry}{"4A}
                                          {stmry}{"71}
\DeclareMathDelimiter\rrbracket{\mathclose}{stmry}{"4B}
                                           {stmry}{"79}
\newcommand{\mc}[1]{\mathcal{#1}}
\def\cA{\mathcal{A}}
\def\hcA{\hat{\cA}}
\def\hcC{\hat{\mathcal{C}}}
\def\cD{\mathcal{D}}
\def\cE{\mathcal{E}}
\def\cF{\mathcal{F}}
\def\hcF{\hat{\cF}}
\def\cM{\mathcal{M}}
\def\hcM{\hat{\cM}}
\def\m{\mathfrak{m}}
\def\plh{\mathord{\cdot}}
\def\l{\lambda}
\def\d{\partial}
\def\th{\theta}
\begin{document}

\title{Central invariants revisited}

\author{Guido Carlet}
\address{Institut de Math\'ematiques de Bourgogne, UMR 5584 CNRS, Universit\'e de Bourgogne Franche-Comt\'e, 21000 Dijon, France.
}
\email{guido.carlet@u-bourgogne.fr}

\author{Reinier Kramer}
\address{Korteweg-de Vries Instituut voor Wiskunde, 
Universiteit van Amsterdam, Postbus 94248,
1090GE Amsterdam, Nederland.}
\email{r.kramer@uva.nl}

\author{Sergey Shadrin}
\address{Korteweg-de Vries Instituut voor Wiskunde, 
Universiteit van Amsterdam, Postbus 94248,
1090GE Amsterdam, Nederland.}
\email{s.shadrin@uva.nl}

\begin{abstract}
We use refined spectral sequence arguments to calculate known and previously unknown bi-Hamiltonian cohomology groups, which govern the deformation theory of semi-simple bi-Hamiltonian pencils of hydrodynamic type with one independent  and \( N\) dependent variables. In particular, we rederive the result of Dubrovin-Liu-Zhang that these deformations are pa\-ra\-me\-trized by the so-called central invariants, which are \( N\) smooth functions of one variable.
\end{abstract}

\maketitle

\tableofcontents

\raggedbottom


\section{Introduction}

In this paper, we consider the classification of a certain kind of dispersive evolutionary partial differential equations. More precisely, consider a convex domain \( U \subset \R^N \), and formal functions \( u: S^1 \to U \). Denoting the coordinate on \( S^1 \) by \( x\), the partial differential equations look like
\begin{equation}\label{PDE}
\frac{\d u^i}{\d t} = A_j^i(u) u_x^j + \Big( B_j^i(u)u_{xx} + C^i_{jk}(u) u^j_x u^k_x\Big) \varepsilon + \mc{O}(\varepsilon^2 )\,,
\end{equation}
as a homogeneous equation for the degree defined by \( \deg \d_t = \deg \d_x = - \deg \varepsilon = 1\).\par
We require moreover that this equation is bi-Hamiltonian and its dispersionless limit can be written as a hamiltonian equation of hydrodynamic type in two compatible ways:
\begin{equation}
\frac{\d u}{\d t} =\{ u(x),H_1\}_1 = \{ u(x), H_0\}_2\,,
\end{equation}
subject to a number of conditions that will be specified in \cref{preliminary}.\par
The archetypical example of such a structure is the Korteweg-de Vries equation, given by \( \frac{\d u}{\d t} = u u_x + \frac{\varepsilon^2}{12} u_{xxx} \), for which we have\cite{mag78}
\begin{align}
\{ u(x),u(y)\}_1 &= \delta' (x-y) & H_1 &= \int dx \Big( \frac{u^2}{2} + \frac{\varepsilon^2}{12}u_{xx} \Big) \\
\{ u(x),u(y)\}_2 &= u(x)\delta' (x-y)+\frac{1}{2} u'(x) \delta (x-y) + \frac{\varepsilon^2}{8}\delta'''(x-y) & H_0 &= \frac{2}{3} \int dx \,u\,.
\end{align}
An important reason for studying such structures is the possibility to extend it to an infinite-dimensional hierarchy of partial differential equations via the recursion operator \( \{\mathord{\cdot},\mathord{\cdot}\}_1^{-1} \{ \mathord{\cdot},\mathord{\cdot}\}_2 \):
\begin{equation}
\frac{\d u}{\d t_j} = \{ u(x), H_j \}_1 = \{ u(x), H_{j-1}\}_2\,.
\end{equation}

On the space of these structures, there is an action of the Miura group, given by diffeomorphisms of \( U\) in the dispersionless limit, with as dispersive terms differential polynomials in \( u \). Hence, it is natural to try to classify equivalence classes of Poisson pencils with respect to this group action. In 2004, in a series of papers, Dubrovin, Liu, and Zhang first considered this classification problem~\cite{lz05,dlz06}; see also~\cite{l02, z02}. In particular, they proved that the Miura equivalence class of deformations of a given semi-simple\footnote{Recently, the non-semisimple case has been considered in~\cite{dvls16}.} pencil of local Poisson brackets of hydrodynamic type is specified by a choice of $N$ functions of one variable. They called these functions \emph{central invariants}, and conjectured that for any choice of central invariants the corresponding Miura equivalence class is non-empty. This conjecture was proved in~\cite{cps15}. 

As any deformation theory of this type, its space of infinitesimal deformations as well as the space of obstructions for the extensions of infinitesimal deformations are controlled by some cohomology groups. In this case these are the so-called bi-Hamiltonian cohomology in cohomological degrees $2$ and $3$, and one should also consider there the degree with respect to the total $\d_x$-derivative, where $x$ is the spatial variable. In these terms, central invariants span the second bi-Ha\-mil\-to\-ni\-an cohomology group in $\d_x$-degree 3, and the second bi-Hamiltonian cohomology groups in $\d_x$-degrees $2$ and $\geq 4$ are equal to zero.

The computation of bi-Hamiltonian cohomology is a delicate issue. It is defined on the space of local stationary polyvector fields on the loop space of an $N$-dimensional domain $U$. A useful tool for this undertaking is the so-called $\theta$-formalism~\cite{g02}. The main technical difficulty is that we cannot immediately work with the space of densities, since there is a necessary factorization by the kernel of the integral along the loop. For the central invariants it is done in~\cite{lz05} essentially by hand for quasi-trivial pencils, i.e. pencils that are equivalent to their leading order by more general transformations, called quasi-Miura transformations. In \cite{dlz06}, it was proved that any semi-simple pencil of hydrodynamic type is quasi-trivial, completing the proof. 

In~\cite{lz13}, Liu and Zhang came up with an important new idea: they invented a way to lift the computation of the bi-Hamiltonian cohomology from the space of local polyvector fields to the space of their densities. The latter can also be considered as the functions on the infinite jet space of the loop space of the shifted tangent bundle $T_U[-1]$, independent of the loop variable $x$. Their approach was used intensively in a number of papers: it has been applied to show that the deformation of the dispersionless KdV brackets is unobstructed~\cite{lz13} and to compute the higher cohomology in this case as well~\cite{cps14}. More generally, this approach allowed a complete computation of the bi-Hamiltonian cohomology in the scalar ($N=1$) case~\cite{cps14-2}. Finally, it was used to show that the deformation theory for any semi-simple Poisson pencil is unobstructed~\cite{cps15}.

At the moment, it is not completely clear yet how widely this approach can be applied to the computation of the bi-Hamiltonian cohomology. In the case of $N>1$ the full bi-Hamiltonian cohomology is not known, and moreover, as the computation in the case $N=1$ shows, the full answer should depend on the formulas for the original hydrodynamic Poisson brackets. So far the computational techniques worked well only for the groups of relatively high cohomological grading and/or grading with respect to the total $\d_x$-derivative degree. In particular, the most fundamental result of this whole theory, the fact that the infinitesimal deformations are controlled by the central invariants, was out of reach of this technique until now. 

In this paper, we extend the computational techniques of~\cite{cps15} further and give a new proof of the theorem of Dubrovin-Liu-Zhang that the space of the Miura classes of the infinitesimal deformations of a semi-simple Poisson pencil is isomorphic to the space of $N$ functions of one variable. An advantage of our approach is that we use only the general shape of the differential induced on the jet space of $T_U[-1]$, and, for instance, the Ferapontov equations for compatible Poisson brackets of hydrodynamic type~\cite{Ferapontov01} enter the computation only through the fact that the differential squares to zero. Furthermore, our proof does not rely on the quasi-triviality theorem. A disadvantage is that in the cohomological approach of Liu-Zhang it is not possible to reproduce the explicit formula for the central invariants of a given deformation as in~\cite[Equation 1.49]{dlz06}.

\subsection{Organization of the paper}
The outline of the article is as follows. In \cref{preliminary} we recall some standard notations and formulate our main results, based on the computation of some of the cohomology of a certain complex $(\hcA[\lambda], D_\lambda)$ in the rest of the paper.
In \cref{vanishing} we give a streamlined version of the proof~\cite{cps15} of the vanishing theorem for the cohomology of $(\hcA[\lambda], D_\lambda)$.
In the next sections we proceed to compute other parts of this cohomology that will lead us in particular to the identification of the parameters of the infinitesimal deformations.
In \cref{subcomplexdCi} we compute the full cohomology of the complex $(\hat{d}_i (\hcC_i) , \cD_i )$, a subcomplex in one of the spectral sequences. In \cref{subcomplexpd} we compute the cohomology of another subcomplex, $(\hcC[\lambda],\Delta_{0,1})$, for degrees $p=d$.
In \cref{subcomplexpdpu} we prove a vanishing result in degrees $(p,d)=(3,2)$, which is essential to complete the reconstruction of the second bi-Hamiltonian cohomology group. 
In \cref{cohomologyA} we collect the results of the previous sections and, using standard spectral sequences arguments, we prove our main theorems.%

\subsection{Acknowledgments} We thank Hessel Posthuma for useful discussions and the anonymous reviewers for their comments and helpful suggestions. The authors were supported by the Netherlands Organization of Scientific Research.

\section{Recollections and main results} 
\label{preliminary}

\subsection{Poisson pencils}

Let $N$ be the number of dependent variables.
We consider a domain $U$ in $\mathbb{R}^N$ outside the diagonals. Let $u^1,\dots,u^N$ be the coordinate functions of $\mathbb{R}^N$ restricted to $U$. We denote the corresponding basis of sections of $T_U[-1]$ by $\theta^0_1,\dots,\theta^0_N$. We denote by $\mc{A}$ the space of functions on the jet space of the loop space of $U$ that do not depend on the loop variables $x$, that is, 
\begin{equation}
\mc{A} \coloneq C^\infty(U) \Big\llbracket \left\{u^{i,d}\right\}_{\substack{i=1,\dots, N \\ d=1,2,\dots}}  \Big\rrbracket\,,
\end{equation}
and we call its elements differential polynomials. \par
Similarly, we denote by $\hcA$ the space of functions on the jet space of the loop space of $T_U[-1]$ that do not depend on the loop variables $x$, 
\begin{equation}
\hcA \coloneq C^\infty(U) \Big\llbracket \left\{u^{i,d}\right\}_{\substack{i=1,\dots, N \\ d=1,2,\dots}} , \left\{\theta_i^{d}\right\}_{\substack{i=1,\dots, N \\ d=0,1,2,\dots}} \Big\rrbracket\,.
\end{equation}
Sometimes it is convenient to denote the coordinate functions $u^i$ by $u^{i,0}$, for $i=1,\dots,N$. 

The standard derivation, i.e., the total derivative with respect to the variable $x$, is given by 
\begin{equation}
\d_x \coloneq \sum_{d=0}^\infty \left( u^{i,d+1}\frac{\d}{\d u^{i,d}} + \theta^{d+1}_i \frac{\d}{\d \theta^d_i} \right)\,,
\end{equation}
where we assume summation over the repeated basis-related indices (here \( i\)).
\begin{definition}
The space of \emph{local functionals} on \( U\) is defined to be \( \hat{\mc{F}} \coloneq \hat{\mc{A}}/\d_x \hat{\mc{A}} \). The natural quotient map is denoted \( \int dx \colon \hat{\mc{A}} \to \hat{\mc{F}} \).
\end{definition}
Note that both spaces $\hcA$ and $\hcF$ have two gradations: the standard gradation that we also call the $\d_x$-degree in the introduction, given by $\deg u^{i,d}=\deg \theta_i^d=d$, $i=1,\dots,N$, $d\geq0$, and the super gradation that we also call the cohomological or the $\th$-degree, given by $\deg_\th  u^{i,d}= 0$, $\deg_\th \th^d_i = 1$, $i=1,\dots,N$, $d\geq 0$. The first degree is also defined on \( \mc{A} \). We denote by $\hcA^p_d$ (respectively, $\hcF^p_d$) the subspace of $\hcA$ (respectively, $\hcF$) of standard degree $d$ and cohomological degree $p$.
\begin{definition}
A \emph{(dispersive) Poisson pencil} is a pair of Poisson brackets \( \big\{ \{\plh, \plh\}_a \big\}_{a=1,2} \) on \( \mc{F}\llbracket \varepsilon \rrbracket \), homogeneous of standard degree one, where \( \deg \varepsilon = -1\), such that \( \{ \plh,\plh \}_2 + \l \{\plh,\plh \}_1\) is a Poisson bracket for any \( \l \in \R \).\par
A \emph{dispersionless Poisson pencil} is a dispersive Poisson pencil which does not depend on \( \varepsilon \). Any dispersive Poisson pencil has a \emph{dispersionless limit}: this is the constant term in \( \varepsilon \).\par
We will furthermore implicitly require all our Poisson pencils to have a \emph{hydrodynamic dispersionless limit} on \( \mc{F} \),
\begin{equation}
\{ u^i(x),u^j(y)\}_a = \big( g^{ij}_a(u)\d_x + \Gamma^{ij}_{k,a}(u)u^k_x\big) \delta (x-y) + \mc{O}(\varepsilon )\,.
\end{equation}
\end{definition}
\begin{remark}
For any Poisson bracket of hydrodynamic type, \( g^{ij}_a \) is a flat pseudo-Riemannian metric on \( U\) with Christoffel symbols \( \Gamma^{ij}_{k,a} \), as proved by Dubrovin and Novikov in \cite{dn83}.
\end{remark}
\begin{definition}
A Poisson pencil of hydrodynamic type is \emph{semi-simple} if the eigenvalues of \( g^{ij}_2 - \l g^{ij}_1 \) are all distinct and non-constant on \( U\).
\end{definition}
From now on, we will assume the dispersionless limit of our Poisson pencils are semi-simple, and use the roots of \( \det( g_2^{ij} - \l g_1^{ij}) \) as canonical coordinates \( u^i\) on \( U\). This reduces the metrics to
\begin{align}
g_1^{ij}(u) &= f^i(u)\delta^{ij} & g_2^{ij}(u) = u^if^i(u)\delta^{ij}\,,
\end{align}
for $N$ non-vanishing functions $f^1,\dots,f^N$, subject to the following equations derived by Ferapontov~\cite{Ferapontov01}. Let $H_i:=(f^i)^{-1/2}$, $i=1,\dots,N$, be Lam\'e coefficients and $\gamma_{ij}:= (H_i)^{-1} \d_i H_j$, $i\not=j$, be rotation coefficients for the metric determined by $f^1,\dots,f^N$. Here we denote by $\d_i$ the derivative $\d/\d u^i$.  Then we have:
\begin{align}
& \d_k \gamma_{ij} = \gamma_{ik}\gamma_{kj}, & i\not= j\not= k\not= i; \label{eq:dgammaijk}\\
& \d_i \gamma_{ij} + \d_j \gamma_{ji} +\sum\nolimits_{k\not=i,j} \gamma_{ki} \gamma_{kj} =0, & i\not=j; \label{dgammaij}\\
& u^i\d_i \gamma_{ij} + u^j\d_j \gamma_{ji} +\sum\nolimits_{k\not=i,j} u^k\gamma_{ki} \gamma_{kj}
+\frac 12 (\gamma_{ij}+\gamma_{ji}) = 0, & i\not=j . \label{udgamma}
\end{align}
Note that there is no implicit summation in these equations, as these only occur in the case of contractions of generators of \( \hcA \) and derivatives with respect to them, and are a shorthand for matrix-like multiplications. In the rest of the paper, we will often include an explicit summation sign if there is a chance of confusion. If in doubt about an implicit summation, it will suffice to check the other side of the equation for occurence of the same summation index.\par
The space of Poisson pencils has a naturally-defined automorphism group:

\begin{definition}
The \emph{Miura group} is the group of transformations of the form
\begin{equation}
u^i \mapsto v^i(u) + \sum_{k \geq 1}\Phi^i_k \varepsilon^k\,,
\end{equation}
where \( v\) is a diffeomorphism of \( U\) and the \( \Phi_k^i \) are differential polynomials of degree \( k\). Hence the total degree of any Miura transformation is zero.
\end{definition}
Given this action, it is a natural question to try to classify Poisson pencils up to equivalence. Choosing canonical coordinates as above fixes the leading term of the Miura transformation (the transformation of \emph{first type}), so the remaining freedom is given by transformations with \( v = \Id_U \) (transformations of the \emph{second type}). The first main result to answer this question is the following theorem by Dubrovin, Liu, and Zhang:
\begin{theorem}[\cite{lz05,dlz06}]\label{centinv}
Given a dispersionless Poisson pencil \( \{ \plh ,\plh \}_a^0\), deformations of the form
\begin{equation}
\{ u^i(x),u^j(y)\}_a = \{ u^i(x),u^j(y)\}_a^0 + \sum_{k\geq 1} \varepsilon^k \sum_{l=0}^{k+1} A_{k,l;a}^{ij}\delta^{(l)}(x-y)\,,
\end{equation}
where \( A^{ij}_{k,l;a} \) are differential polynomials of degree \( k+1-l \), are equivalent if and only if the following associated functions, called \emph{central invariants}, are equal:
\begin{equation}
c_i(u) \coloneq \frac{1}{3(f^i(u))^2}\Bigg( A_{2,3;2}^{ii} - u^i A_{2,3;1}^{ii} + \sum_{k \neq i} \frac{\big( A_{1,2;2}^{ki} - u^i A_{1,2;1}^{ki} \big)^2}{f^k(u)(u^k-u^i)}\Bigg)\,.
\end{equation}
Furthermore, \( c_i \) only depends on \( u^i \).
\end{theorem}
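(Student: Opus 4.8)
\emph{Proof strategy.} The plan is to translate the classification into a cohomology computation. In the $\theta$-formalism a deformation of the bracket is a bivector of cohomological ($\theta$-)degree $2$; the requirement that $\{\plh,\plh\}_2+\l\{\plh,\plh\}_1$ remain Poisson for all $\l$ is exactly the statement that it is a cocycle for the bi-Hamiltonian differential, while a Miura transformation of the second type changes it by a coboundary. Thus the Miura-equivalence classes of infinitesimal deformations form the second bi-Hamiltonian cohomology group, which is further graded by the $\d_x$-degree $d$. As recalled in the introduction, the theorem then reduces to showing that this group vanishes for $d=2$ and for $d\geq 4$, and that for $d=3$ it is isomorphic to $(C^\infty(\R))^{\oplus N}$, i.e.\ to $N$ functions of one variable; the central invariants $c_i$ are the coordinates on this group. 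The final clause, that each $c_i$ depends only on $u^i$, is precisely the statement that the $i$-th summand is $C^\infty(\R)$ rather than $C^\infty(U)$.

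Following Liu--Zhang, I would first lift the computation from the space of local functionals $\hcF$ to the space of densities $\hcA$, where the bi-Hamiltonian differential is represented by the single operator $\Dla$ on $\hcA[\l]$, with $\l$ the pencil parameter, so that the target groups are recovered from the cohomology of $(\hcA[\l],\Dla)$. A structural point worth emphasizing is that the Ferapontov equations \eqref{eq:dgammaijk}--\eqref{udgamma} for the compatible metrics need only be used through the single identity $\Dla^2=0$; this is what decouples the final answer from the detailed form of the rotation coefficients $\gamma_{ij}$.

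The computation itself I would organize through a filtration and the associated spectral sequence, reducing the problem to the cohomology of a few tractable subcomplexes and then reassembling. Concretely, one computes the cohomology of the subcomplexes appearing in the spectral sequence --- the pieces built from $\hat{d}_i(\hcC_i)$ with differential $\cD_i$, and from $\hcC[\l]$ with differential $\Delta_{0,1}$ in the diagonal degrees $p=d$ --- and feeds these into the later pages. The delicate ingredient is a vanishing result in bidegree $(p,d)=(3,2)$: it is this that guarantees the degree-$3$ cocycles survive to cohomology without spurious contributions, pinning the count at exactly $N$ functions of one variable. To obtain the full, non-infinitesimal statement of the theorem, the infinitesimal classification is then combined with the unobstructedness of the deformation theory established in \cite{cps15}, so that no new moduli appear at higher order in $\varepsilon$.

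I expect the main obstacle to be exactly the degree-$3$ identification together with the $(3,2)$ vanishing. A priori the size of the relevant cohomology could depend sensitively on the metric data, and the whole difficulty lies in the spectral-sequence bookkeeping needed to show that, after all cancellations, precisely one function of one variable survives per canonical coordinate $u^i$, and nothing else.
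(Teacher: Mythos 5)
Your strategy coincides with the paper's own route: the cohomological reformulation (Miura classes of infinitesimal deformations as $BH^2_{\geq 2}$), the Liu--Zhang lift to $(\hcA[\l],\Dla)$ combined with the long exact sequence, the computation of the subcomplexes $(\hat{d}_i(\hcC_i),\cD_i)$ and $(\hcC[\l],\Delta_{0,1})$ at $p=d$, the vanishing in bidegree $(3,2)$ (\cref{prop32}), and the reassembly into \cref{thm:mainmain}; even your observation that the Ferapontov relations enter only through $\Dla^2=0$ is precisely the advantage the paper claims for its method.

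There is, however, one genuine gap relative to the statement as literally given. The theorem asserts that deformations are equivalent if and only if the \emph{specific} differential-polynomial expressions $c_i(u)$ built from $A^{ii}_{2,3;a}$ and $A^{ki}_{1,2;a}$ coincide. The machinery you describe produces an abstract isomorphism $BH^2_3 \cong \bigoplus_{i=1}^N C^\infty(u^i)$, i.e.\ it shows that the moduli of infinitesimal deformations consists of $N$ functions of one variable, but it does not identify the coordinates of that isomorphism with the displayed formula. Your sentence ``the central invariants $c_i$ are the coordinates on this group'' is exactly the step the method cannot deliver: the paper says so explicitly (``it is not possible to reproduce the explicit formula for the central invariants of a given deformation as in \cite[Equation 1.49]{dlz06}'') and accordingly claims only the abstract form of \cref{centinv}, namely \cref{thm:mainstatement}. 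To prove the theorem as stated you would have to supplement the spectral-sequence computation with the Dubrovin--Liu--Zhang argument that the displayed $c_i$ are invariant under second-type Miura transformations and separate equivalence classes --- or, as the paper does, state the result with the caveat that the explicit form of the invariants is not recovered.
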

They also conjectured that any set of such functions has an associated deformation class. This conjecture was settled recently:
\begin{theorem}[\cite{cps15}]\label{defexist}
Given a dispersionless Poisson pencil \( \{ \plh, \plh \}_a^0\) and a  set \( \big\{ c_i(u) \in C^\infty (U)\big\}_{i=1}^N \), such that each \( c_i \) depends only on \( u^i \), there exists a deformation of the pencil as in the previous theorem which has the \( c_i \) as central invariants. 
\end{theorem}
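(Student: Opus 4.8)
The plan is to deduce the existence statement from the cohomological picture together with the vanishing of the obstruction groups. Since by \cref{centinv} the central invariants form a complete system of Miura invariants, it suffices to exhibit, for each admissible choice $\{c_i\}$, a single bi-Hamiltonian pencil deforming $\{\plh,\plh\}_a^0$ whose central invariants are the prescribed $c_i$: its Miura class is then automatically the desired one, and every class with these invariants is realized. I would construct such a pencil in two steps, first realizing the $c_i$ by an infinitesimal deformation and then prolonging it order by order in $\varepsilon$ to a genuine Poisson pencil.

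For the first step I work in the $\theta$-formalism on $\hcA$ and read a deformation of the pencil as a formal series of cohomological-degree-two densities graded by the $\d_x$-degree, where the order-$\varepsilon^k$ contribution sits in $\d_x$-degree $k+1$. The vanishing of the $\d_x$-degree-two part of the second bi-Hamiltonian cohomology shows that the genuine deformation parameters first appear at $\d_x$-degree three; there the computation identifies the cohomology with the space of $N$ functions of one variable precisely through the central-invariant formula of \cref{centinv}. In particular this map is surjective, so for any $\{c_i\}$ with $c_i=c_i(u^i)$ there is a $\Dla$-closed density of $\d_x$-degree three realizing it. Concretely one may take the ansatz with all first-order coefficients $A^{ki}_{1,l;a}$ equal to zero and diagonal leading coefficients fixed by $A^{ii}_{2,3;1}=0$ and $A^{ii}_{2,3;2}=3(f^i)^2 c_i$, then adjust the remaining off-diagonal and lower-derivative coefficients so as to close the cocycle, which does not alter the $c_i$. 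The hypothesis that each $c_i$ depends only on $u^i$ is exactly the solvability condition for this closing, i.e. the condition that the chosen leading term be $\Dla$-closed.

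For the second step I extend the infinitesimal deformation inductively in $\varepsilon$, equivalently in the $\d_x$-degree. At each stage the failure of the prolonged pencil to remain bi-Hamiltonian is a cohomological-degree-three density which is automatically a cocycle by the graded Jacobi identity and the lower-order equations, so the obstruction to continuing is its class in the corresponding third bi-Hamiltonian cohomology group. The crucial input, and the main obstacle, is that these obstruction groups vanish in the relevant $\d_x$-degrees, i.e. that the deformation theory is unobstructed; this is exactly the point where the spectral-sequence machinery is indispensable, as it reduces the vanishing to that of the cohomology of the auxiliary subcomplexes of $(\hcA[\l],\Dla)$ studied in the later sections, and for the pieces not recomputed here it may be imported from the unobstructedness result of \cite{cps15}. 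Granting this vanishing, each obstruction class is zero, the induction produces an all-orders Poisson pencil, and by construction of its leading term together with the Miura-invariance of the central invariants this pencil has the prescribed $c_i$, which completes the proof.
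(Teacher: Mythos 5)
Your two-step strategy --- infinitesimal deformation classes read off from \( BH^2_{\geq 2} \), then order-by-order prolongation using the vanishing of the obstruction groups in \( BH^3 \) --- is the same as the paper's, which obtains \cref{centinv} (in abstract form) and \cref{defexist} simultaneously from \cref{thm:mainmain}. One small correction there: you do not need to import unobstructedness from \cite{cps15}; the vanishing \( BH^3_d = 0 \) for \( d \geq 5 \) is part of \cref{thm:mainmain}, proved in this paper via \cref{vanth} and the exact sequence~\eqref{eq:MainExact}.

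The genuine gap is in your first step, at the claim that the computation identifies \( BH^2_3 \) with \( \bigoplus_{i=1}^N C^\infty(u^i) \) ``precisely through the central-invariant formula of \cref{centinv}''. It does not. The isomorphism of \cref{thm:mainmain} is the composition \( BH^2_3 \cong H^2_3(\hcF[\l]) \cong H^3_3(\hcA[\l]) \cong \bigoplus_i C^\infty(u^i) \), where the middle arrow is the connecting homomorphism of the long exact sequence and the last one comes from \cref{resultp=d}; nothing in these computations relates the resulting abstract parameter to the explicit expression for \( c_i \) in \cref{centinv}, and the paper flags exactly this point as a caveat (``the explicit form of the central invariants cannot be recovered by this method''). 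What the cohomology together with \cref{centinv} gives you is an injective map \( c \) from the set of deformation classes to \( \bigoplus_i C^\infty(u^i) \), plus an abstract bijection between these same two sets; for infinite-dimensional spaces, injectivity plus abstract bijectivity does not imply that \( c \) is surjective, and surjectivity of \( c \) is the literal content of \cref{defexist}. Your fallback --- set \( A^{ki}_{1,l;a} = 0 \), \( A^{ii}_{2,3;1} = 0 \), \( A^{ii}_{2,3;2} = 3(f^i)^2 c_i \) and ``adjust the remaining coefficients so as to close the cocycle'' --- is precisely the missing nontrivial construction: it amounts to solving the bi-Hamiltonian equations at order \( \varepsilon^2 \), equivalently to inverting the connecting homomorphism on the representative \( c_i(u^i)\,\th_i^0\th_i^1\th_i^2 \) (up to a power of \( f^i \)) of \cref{resultp=d}, and your assertion that the hypothesis \( c_i = c_i(u^i) \) is ``exactly the solvability condition'' is given without proof. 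As written, your argument therefore establishes the abstract form of the theorem --- deformation classes are parametrized by \( N \) functions of one variable --- which is what the paper itself proves, with the same caveat; to obtain the statement with the explicit \( c_i \) one needs either the concrete order-\( \varepsilon^2 \) realization as in \cite{dlz06,cps15}, or an evaluation of the formula of \cref{centinv} on deformations corresponding to the representatives of \cref{resultp=d}, neither of which your proposal carries out.
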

The first theorem was proved using quasi-triviality of Poisson pencils, involving Miura transformations with rational differential functions, i.e. the dependence on the \( u^{i,d} \) is allowed to be rational. The second theorem used more general methods from homological algebra, using formalism and techniques developed by Liu and Zhang \cite{lz13}. The main result of the current paper is an extension of the results of \cite{cps15}, which in particular also implies the abstract form of \cref{centinv}, showing that deformations of a dispersionless Poisson pencil are classified by \( N\) smooth functions, each dependent on one \( u^i \). Hence, this paper gives a unified proof of both theorems, yielding a complete classification of deformations of Poisson pencils of hydrodynamic type in several dependent and one independent variable, with the caveat that the explicit form of the central invariants cannot be recovered by this method.\par

\subsection{Cohomological formulation}
In essence, the theorems in the previous subsection are cohomological statements: \cref{centinv} states that \emph{infinitesimal deformations}, i.e., deformations up to \( \mc{O}(\varepsilon^3 )\), are equivalent if and only if their central invariants are, and can be extended to at most one deformation to all orders, while \cref{defexist} states that this deformation to all orders exists. To develop the right cohomological notions, we have to introduce some more notation.
\begin{definition}
On \( \hcA \), the \emph{variational derivatives} with respect to the coordinates on \( T_U[-1] \) are defined via the Euler-Lagrange formula as
\begin{align}
\frac{\delta}{\delta u^i} &= \sum_{s \geq 0} (-\d )^s \frac{\d}{\d u^{i,s}}\,, & \frac{\delta}{\delta \th_i} &= \sum_{s \geq 0} (-\d )^s \frac{\d}{\d \th_i^s}\,.
\end{align}
These are zero on total \( \d_x \)-derivatives, so they factor through maps \( \hcF \to \hcA \), which we denote by the same symbols.\par
The \emph{Schouten-Nijenhuis bracket} is defined by
\begin{equation}
[\plh ,\plh ] : \hcF^p \times \hcF^q \to \hcF^{p+q-1} : \bigg( \int \! A \, dx, \int \! B\, dx \bigg) \mapsto \int  \bigg( \frac{\delta A}{\delta \th_i} \frac{\delta B}{\delta u^i} + (-1)^p \frac{\delta A}{\delta u^i} \frac{\delta B}{\delta \th_i} \bigg) dx \,.
\end{equation}
\end{definition}
In a completely analogous way to the finite-dimensional case, a Poisson bracket \( \{ \plh, \plh \} \) corresponds to a bivector \( P \in \hcF^2 \) such that \( [P,P] = 0\), and therefore induces a differential \( d_P = [P,\plh ]\) on \( \hcF \). This can be lifted straightforwardly to a differential \( D_P \) on \( \hcA \).\par
For a pencil \( \{ \plh, \plh \}_a \), we get \( P_a \in \hcF \) such that \( d_{P_1} d_{P_2} + d_{P_2} d_{P_1} = 0\), so \( d_\l = d_{P_2} - \l d_{P_1} \) is a differential on \( \hcF [\l ]\), and similarly, \( D_\l \) is one on \( \hcA [\l ] \). Explicitly, for a pencil given by the functions $f^1,\dots,f^N$, $D_\lambda$ is defined as $D_\lambda:=D(u^1f^1,\dots,u^Nf^N)-\lambda D(f^1,\dots,f^N)$, where
\begin{align}
\label{diff-biham}
D(g^1,\dots,g^N) = &
\sum_{s\geq0}  \partial^s\left(g^i\theta_i^1\right)\frac{\d}{\d u^{i,s}}
\\ \notag 
& +\frac{1}{2}\sum_{s\geq0}  \partial^s\bigg(
\d_jg^i u^{j,1} \theta_i^0 
+ g^i\frac{\d_i g^j}{g^j} u^{j,1}\theta_j^0 
- g^j\frac{\d_j g^i}{g^i} u^{i,1}\theta_j^0
\bigg) \frac{\d}{\d u^{i,s}}
\\
& + \frac{1}{2} \sum_{s\geq0}  \partial^s \bigg( 
\d_ig^j  \theta_j^0 \theta_j^1 
+ g^j\frac{\d_j g^i}{g^i} \theta_i^0 \theta_j^1 
- g^j\frac{\d_j g^i}{g^i} \theta_j^0 \theta_i^1
\bigg) \frac{\d}{\d \theta_i^s}
\\
&+\frac12 \sum_{s\geq0} \d^s \bigg( 
\d_i \Big( g^k \frac{\d_k g^j}{g^j} \Big) u^{j,1} \theta^0_k \theta^0_j  
-\d_j \Big( g^k \frac{\d_k g^i}{g^i} \Big) u^{j,1} \theta^0_k \theta^0_i \bigg) \frac{\partial }{\partial \theta_i^s} .
\end{align}
By a result of \cite{dz01,g02,dms05}, \( H^2 (\hcF, d_P ) = 0\) for any hydrodynamic Poisson bivector \( P\). This makes it possible to construct, order by order, a Miura tranformation that turns the first Poisson bracket in a deformed Poisson pencil into its dispersionless part. Hence, to deform the second bracket, we should consider the following:
\begin{definition}[\cite{dz01}]
The \emph{bi-Hamiltonian cohomology} of a Poisson pencil \( P_1\), \( P_2 \) is
\begin{equation}
BH(U,P_1,P_2) = \frac{\ker d_{P_1} \cap \ker d_{P_2}}{\im d_{P_1}d_{P_2}}\,.
\end{equation}
As in similar cases, we denote by $BH^p_d$ the subspace of $BH$ of $\d_x$-degree $d$ and cohomological degree $p$.
\end{definition}
An interpretation of the first few of these groups has also been given in \cite{dz01}:
\begin{itemize}
\item The common Casimirs of the Poisson pencil are given by \( BH^0\);
\item The bi-Hamiltonian vector fields are given by \( BH^1\);
\item The equivalence classes of infinitesimal deformations of the pencil are given by \( BH^2_{\geq 2} \);
\item The obstruction to extending infinitesimal deformations to deformations of a higher order are given by \( BH^3_{\geq 5}\).
\end{itemize} 

We can restate \cref{centinv,defexist} together using bi-Hamiltonian cohomology. We denote by $C^\infty (u^i)$ the space of smooth functions on $U$ that only depend on the single variable $u^i$. 

\begin{theorem} \label{thm:mainstatement} We have $BH^2_d$ is equal to zero for $d=2$ and $d\geq 4$. In the case $d=3$, $BH^2_3$ is isomorphic to $\bigoplus_{i=1}^N C^\infty (u^i)$. Moreover, \( BH_d^3 \) is zero for \( d \geq 5\).
\end{theorem}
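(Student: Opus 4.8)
The plan is to deduce all three statements from the cohomology of the auxiliary complex $(\hcA[\l], \Dla)$, reducing $BH^p_d$ to it in two steps. First I would descend from densities to local functionals. Since $\hcF = \hcA/\d_x\hcA$ and the only elements killed by $\d_x$ sit in bidegree $(0,0)$ (the constants $C^\infty(U)$), the four-term exact sequence
\[
0 \to C^\infty(U) \to \hcA \xrightarrow{\ \d_x\ } \hcA \xrightarrow{\ \int dx\ } \hcF \to 0
\]
splits into two short exact sequences of complexes for $\Dla$ and $d_\l$. For $p \geq 1$ the constants never interfere, so the resulting long exact sequence reads
\[
\begin{aligned}
\cdots \to H^p_{d-1}(\hcA[\l], \Dla) &\xrightarrow{\ \d_x\ } H^p_d(\hcA[\l], \Dla) \\
&\to H^p_d(\hcF[\l], d_\l) \to H^{p+1}_{d-1}(\hcA[\l], \Dla) \to \cdots,
\end{aligned}
\]
and computes $H(\hcF[\l], d_\l)$ from $H(\hcA[\l], \Dla)$. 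Second, I would recover $BH^p_d$ from the $\R[\l]$-module $H^p_d(\hcF[\l], d_\l)$ by a universal-coefficients-type reduction modulo $\l$; the potential $\operatorname{Tor}$ corrections are controlled by the cited acyclicity $H^2(\hcF, d_{P_1}) = 0$, which is exactly what lets the pencil differential see the corner cohomology $\ker d_{P_1} \cap \ker d_{P_2}/\im d_{P_1}d_{P_2}$.

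The heart of the matter is then the computation of $H^p_d(\hcA[\l], \Dla)$. I would filter $\hcA$ so that the associated graded differential is the tractable piece $\Delta_{0,1}$, yielding a spectral sequence converging to $H(\hcA[\l], \Dla)$. The vanishing theorem of \cref{vanishing} annihilates this cohomology outside a narrow window of bidegrees, leaving only finitely many $(p,d)$ with $p \in \{2,3\}$ and small $d$ to be determined by hand. These surviving terms are governed by the two subcomplexes isolated inside the spectral sequence: the full cohomology of $(\hat{d}_i(\hcC_i), \cD_i)$ from \cref{subcomplexdCi}, and the cohomology of $(\hcC[\l], \Delta_{0,1})$ in the diagonal degrees $p = d$ from \cref{subcomplexpd}. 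Feeding these, together with the vanishing in bidegree $(p,d) = (3,2)$ from \cref{subcomplexpdpu}, into the $E_1$/$E_2$ pages pins down $E_\infty$.

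For the assembly I would run the two long exact sequences against this $E_\infty$ input. In $\d_x$-degree $d = 2$ and $d \geq 4$ the vanishing window and the subcomplex computations force the relevant contributions to $H^2_d(\hcF[\l], d_\l)$ to vanish, hence $BH^2_d = 0$. In degree $d = 3$ the only surviving classes are those of $(\hcC[\l], \Delta_{0,1})$ that are parametrized by $N$ functions, each depending on a single canonical coordinate $u^i$; tracing them through the reduction gives $BH^2_3 \cong \bigoplus_{i=1}^N C^\infty(u^i)$. The same window, now in cohomological degree $p = 3$ with $d \geq 5$, yields $BH^3_d = 0$. The $(3,2)$-vanishing is indispensable here precisely because the $\d_x$-shift in the long exact sequence couples $H^2_d$ to $H^3_{d-1}$: without it the connecting map could contribute spurious classes to $BH^2_3$.

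The main obstacle I expect is the exact identification of $BH^2_3$ with $\bigoplus_i C^\infty(u^i)$, rather than merely a bound on its size. One must show that every surviving cocycle is, up to coboundaries, encoded by $N$ functions and that each of these genuinely depends on $u^i$ alone; this collapse of the functional dependence is forced by the fine structure of $\Delta_{0,1}$ and, through $\Dla^2 = 0$, by the Ferapontov relations \eqref{eq:dgammaijk}--\eqref{udgamma}, so it cannot be obtained by a soft dimension count and is where the analysis of $(\hcC[\l], \Delta_{0,1})$ in \cref{subcomplexpd} does the real work. A secondary difficulty is the careful bookkeeping of the spectral sequence and of the $\hcA \leftrightarrow \hcF$ long exact sequences in the borderline bidegrees, where the constants $C^\infty(U)$ and low-degree terms can leak spurious contributions unless the connecting homomorphisms are tracked precisely.
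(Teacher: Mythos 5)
Your proposal follows the same overall route as the paper (the Liu--Zhang identification \(BH^p_d \cong H^p_d(\hcF[\l],d_\l)\) for \(d\geq 2\), the long exact sequence relating \(\hcA[\l]\) and \(\hcF[\l]\), \cref{vanth}, and the subcomplex computations of \cref{lem13,pro14,prop32}), but it contains a fatal indexing error in the long exact sequence, and this error inverts the very mechanism that produces the central invariants. The connecting homomorphism raises the cohomological degree but \emph{keeps} the standard degree: lifting a \(d_\l\)-cocycle \(f\in\hcF^p_d\) to \(a\in\hcA^p_d\), the element \(D_\l a\) has bidegree \((p+1,d+1)\) and equals \(\d_x b\) for \(b\) of standard degree \(d\), so the sequence reads as in \cref{eq:MainExact}, with fourth term \(H^{p+1}_{d}(\hcA[\l])\), not \(H^{p+1}_{d-1}(\hcA[\l])\) as you write. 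With the correct indexing, at \((p,d)=(2,3)\) one gets
\begin{equation}
0=H^2_3(\hcA[\l]) \to H^2_3(\hcF[\l]) \to H^3_3(\hcA[\l]) \to H^3_4(\hcA[\l])=0\,,
\end{equation}
so \(BH^2_3\) is \emph{entirely} the image of the connecting map, isomorphic to \(H^3_3(\hcA[\l])\cong\bigoplus_{i=1}^N C^\infty(u^i)\,\th_i^0\th_i^1\th_i^2\) of \cref{resultp=d}; the central invariants live in cohomological degree \(3\) of \(\hcA[\l]\), not in degree \(2\). Your sequence, taken literally, instead couples \(H^2_3(\hcF[\l])\) to \(H^3_2(\hcA[\l])\), which vanishes by \cref{prop32}; together with \(H^2_3(\hcA[\l])=0\) (from \cref{vanth}, since \(p<d\)) this would ``prove'' \(BH^2_3=0\), contradicting the statement you are trying to establish. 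So the slip is not cosmetic: it annihilates exactly the nonzero group.

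The error propagates into two further misattributions. First, the \(N\) functions of one variable do not come from the diagonal cohomology of \((\hcC[\l],\Delta_{0,1})\) computed in \cref{pro14}: those classes (\(\R[\l]\) at \(p=0\) and \(\bigoplus_i C^\infty(u^i)\th_i^1\) at \(p=1\)) are killed in the second spectral sequence, whose page-two differential maps the \(p=1\) part isomorphically onto the \(p=2\) classes \(\bigoplus_i C^\infty(u^i)\th_i^0\th_i^2\). What survives at \(p=d=3\), and hence gives \(BH^2_3\) through the connecting map, comes from the cohomology of the subcomplexes \((\hat{d}_i(\hcC_i),\cD_i)\) computed in \cref{lem13}. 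Second, the \((3,2)\) vanishing of \cref{prop32} is not needed to protect \(BH^2_3\) from spurious classes; its actual role is the case \(d=2\): since \(H^2_2(\hcA[\l])=0\) by \cref{resultp=d}, the exact sequence embeds \(H^2_2(\hcF[\l])\) into \(H^3_2(\hcA[\l])\), and \cref{prop32} is precisely what makes that group, hence \(BH^2_2\), vanish. Two smaller inaccuracies: the kernel of \(\d_x\) on \(\hcA\) is \(\R\), not \(C^\infty(U)\), since \(\d_x f=u^{i,1}\d_i f\); and the reduction \(BH^p_d\cong H^p_d(\hcF[\l],d_\l)\) for \(d\geq2\) is the key lemma of Liu--Zhang, not a universal-coefficient/Tor argument hinging on \(H^2(\hcF,d_{P_1})=0\).
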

This is the form of the theorem of which we will give a uniformized proof in this paper. We will actually prove the more general Theorem~\ref{thm:mainmain}, from which this theorem follows.

In order to calculate the bi-Hamiltonian cohomology, we use the key lemma of~\cite{lz13}, see also~\cite{b08}, which implies that for $d\geq 2$ we have that $BH^p_d\cong H^p_d(\hcF[\lambda], d_\lambda)$. Another idea of Liu and Zhang~\cite{lz13} is that in order to compute the cohomology of $(\hcF[\lambda], d_\lambda)$ one might use the long exact sequence in the cohomology induced by the short exact sequence
\begin{equation}
0 \to (\hcA[\lambda]/\R[\lambda], D_\lambda) \xrightarrow{\d_x} ( \hcA[\lambda], D_\lambda) \to (\hcF [\lambda], d_\lambda) \to 0\,.
\end{equation}
In particular, we will consider the parts of the form
\begin{equation}\label{eq:MainExact}
H^p_{d-1} (\hcA[\l]) \to H^p_{d}(\hcA[\l]) \to H^p_d(\hcF[\l]) \to H^{p+1}_{d} (\hcA[\l]) \to H^{p+1}_{d+1}(\hcA[\l])
\end{equation}
for \( d \geq 2\). We omit the differentials in the notation for the cohomology since they are always $D_\lambda$ for the space $\hcA[\lambda]$ and $d_\lambda$ for the space $\hcF[\l]$.

We want to derive \cref{thm:mainstatement} from the exact sequence given by \cref{eq:MainExact}. In order to do this, let us recall that in~\cite{cps15} the following vanishing theorem for the cohomology of the complex $(\hcA[\lambda], D_\lambda)$ was proved. 
\begin{theorem} \label{vanth}
	The cohomology $H^p_d(\hcA[\lambda])$ vanishes for all bi-degrees $(p,d)$, unless $(p,d)=(d+k,d)$ with
	\begin{equation}
	k=0, \dots , N-1 , \quad  d=0, \dots ,N+2  \quad 
	\text{or} \quad k=N, \quad d=0, \dots , N.
	\end{equation}
\end{theorem}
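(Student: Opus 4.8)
The plan is to exploit the bigrading and reduce the whole problem to a finite-dimensional count by a spectral sequence. First I would record the structural fact that, as one checks on each summand of \eqref{diff-biham}, $\Dla$ raises both the $\th$-degree $p$ and the standard degree $d$ by exactly one. Hence it preserves the difference $k:=p-d$, and $(\hcA[\l],\Dla)$ splits as a direct sum of subcomplexes, one for each $k\in\Z$. It therefore suffices to bound, for each fixed $k$, the standard degrees $d$ in which $H^{d+k}_d(\hcA[\l])$ can be non-zero, and to show the complex is acyclic for $k<0$ and $k>N$.

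The engine is a filtration of each $k$-subcomplex by the total number of jet generators $u^{i,s}$ ($s\geq1$) and $\th^s_i$ ($s\geq0$) occurring in a monomial. This filtration is bounded in each fixed bidegree, and one checks term by term in \eqref{diff-biham} that $\Dla$ never decreases this number; the only part that preserves it is the Koszul-type top-symbol operator $\delta=\sum_i\sum_{s\geq1} f^i(u^i-\l)\,\th_i^{s+1}\,\frac{\d}{\d u^{i,s}}$, which pairs $u^{i,s}$ with $\th^{s+1}_i$. So $\delta$ is the $E_0$-differential, and the first thing to compute is its cohomology. Over $C^\infty(U)[\l]$ each coefficient $f^i(u^i-\l)$ is a non-zero-divisor (the $f^i$ are units and $u^i-\l$ is monic in $\l$), so at the generic level these Koszul towers are acyclic and collapse \emph{all} the variables $u^{i,s}$, $\th^{s+1}_i$ with $s\geq1$, leaving a complex built only from $C^\infty(U)$ and the finitely many odd generators $\th^0_i,\th^1_i$.

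This finiteness is what forces the bounds. The only generators contributing positively to $k=p-d$ are the $N$ Grassmann variables $\th^0_1,\dots,\th^0_N$, each of standard degree $0$; a monomial contains at most $N$ of them and none can make $k$ negative, which already yields $0\leq k\leq N$. The standard degree is then carried by the $\th^1_i$ (again at most $N$) together with whatever even jet variables fail to be annihilated by $\delta$. Here is the point: $f^i(u^i-\l)$ is \emph{not} a unit, so the $i$-th Koszul tower does not collapse along the degeneracy locus $\l=u^i$. These residual contributions are exactly what can extend the range of $d$ to $N+2$ when $k\leq N-1$, whereas in the extremal case $k=N$ all $N$ odd generators $\th^0_i$ are already used up, leaving no room for a residual even contribution and confining the support to $d\leq N$.

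\textbf{The main obstacle} is precisely this non-invertibility: the $E_1$-page is not simply $C^\infty(U)\otimes\Lambda[\th^0_i,\th^1_i]$, and one must control the residual classes supported on the $N$ loci $\l=u^i$ and show that neither they nor the higher differentials spread the support beyond the stated window. This is where a single spectral sequence is insufficient and the refined, multi-page analysis organized through the auxiliary subcomplexes $(\hat d_i(\hcC_i),\cD_i)$ and $(\hcC[\l],\Delta_{0,1})$ of the later sections is needed, in particular to separate the regimes $k\leq N-1$ and $k=N$. Since support can only shrink along a (bounded, hence convergent) spectral sequence, confining $E_1$ to the admissible window is enough to conclude the vanishing. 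Throughout, the only input I would use from the Ferapontov system \crefrange{eq:dgammaijk}{udgamma} is the identity $\Dla^2=0$; the explicit form of the rotation coefficients never enters, which is what makes the argument uniform in the pencil.
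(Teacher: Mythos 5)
Your first spectral sequence is exactly the paper's: the filtration by the total number of jet generators is the filtration by $\deg_u+\deg_\th$ of \cref{degu}, your operator $\delta$ is $\Delta_{-1}=\sum_i(-\l+u^i)f^i\hat{d}_i$, and your description of the first page --- the generic summand $\hcC[\l]$ together with residual classes supported on the loci $\l=u^i$ --- is the decomposition \eqref{cohA}. The genuine gap is that the proof stops exactly where the theorem begins. The statement to be proved \emph{is} the support window, and for that you write that ``the refined, multi-page analysis organized through the auxiliary subcomplexes $(\hat{d}_i(\hcC_i),\cD_i)$ and $(\hcC[\l],\Delta_{0,1})$ of the later sections is needed''; that analysis (\cref{vanC,vandC}) is the paper's proof, not something your proposal supplies or replaces.

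Moreover, the one self-contained argument you do give for the bounds fails on the page where you invoke it, so your concluding reduction rests on a false premise. You claim that counting the $\th^0_i$ ``already yields $0\le k\le N$'', and later that it suffices to confine $E_1$ to the admissible window. But $E_1$ is \emph{not} confined to that window: each residual summand $\hat{d}_i(\hcC_i)[\l]/(-\l+u^i)\hat{d}_i(\hcC_i)[\l]$ contains the nonzero classes $\th_i^{s+1}=\hat{d}_i(u^{i,s})$, $s\ge1$, of bidegree $(p,d)=(1,s+1)$, i.e.\ $k=-s$ arbitrarily negative and $d$ unbounded. Hence neither $k\ge 0$ nor $d\le N+2$ nor $k\le N-1$ can be obtained by counting generators on $E_1$; these bounds hold only after taking cohomology with respect to the differential induced by $\Delta_0$. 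In the paper this is the content of \cref{cohdC}: after conjugation by $\Psi$, the Euler-operator weight argument shows that $\cD_{i,1}$ is acyclic on $\hcC\,\hat{d}_i(\m)$ for every monomial $\m\ne u^{i,1}$, so the residual cohomology is a subquotient of $\hcC^i_0\,\th_i^0\th_i^2$, and only then does degree counting produce $d\in\{2,\dots,N+2\}$, $k\in\{0,\dots,N-1\}$, which combined with the counting on $\hcC[\l]$ (\cref{vanC}) gives the asserted window. A smaller omission of the same kind: the acyclicity of the mixed summand $\hcM[\l]$ under $\Delta_{-1}$, which you subsume under ``the Koszul towers are acyclic'', is not formal, because the coefficients $(-\l+u^i)$ are not units; the paper proves it by induction on $N$, using on the residual locus $\l=u^N$ that the functions $u^i-u^N$, $i\ne N$, are invertible on $U$.
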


We give a streamlined proof of this theorem in the next section.\par

The main contributions of this paper are the following results about the cohomology of $\hcA[\lambda]$.
\begin{theorem}\label{resultp=d}
For \( p =d \), the cohomology of \( \hcA [\l ] \) is given by:
\begin{equation}
H_p^p (\hcA [\l ], D_\l ) \cong \begin{cases}
\R [\l ] & p = 0,\\
\bigoplus_{i=1}^N C^\infty (u^i) \th_i^0 \th_i^1 \th_i^2 & p = 3,\\
0 & \text{else.}
\end{cases}
\end{equation}
\end{theorem}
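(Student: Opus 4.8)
The plan is to exploit the structure of the differential $D_\l$ written out in \eqref{diff-biham}, together with the vanishing constraints from Theorem~\ref{vanth}. In the diagonal bidegree $(p,d)=(p,p)$ the elements are particularly rigid: since every $\th_i^d$ and $u^{i,d}$ carries $\d_x$-degree $d$, an element of $\hcA^p_p$ must be a sum of monomials in which the total $\d_x$-degree equals the number of $\theta$-factors. The only way to achieve $\d_x$-degree $p$ with exactly $p$ thetas and no room for extra jet variables is to use $\theta$'s of low $\d_x$-degree and essentially no $u^{i,d}$ with $d\geq 1$. So the first step is a careful \emph{weight count}: I would show that $\hcA^p_p$ is spanned by monomials of the form $P(u)\,\th_{i_1}^{d_1}\cdots \th_{i_p}^{d_p}$ with $\sum d_j = p$ and $P(u)\in C^\infty(U)$ depending only on the $u^{i,0}$. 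This reduces the problem to a finite-dimensional (over $C^\infty(U)$) computation in each $p$.

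Next I would treat the easy extremes. For $p=0$ the space $\hcA^0_0=C^\infty(U)$ and $D_\l$ acts by producing $\th^1$-terms, so $\ker D_\l$ at this spot consists of the $u$-independent constants, giving $\R[\l]$; here I must check that the only common Casimirs of the pencil in degree zero are constants, which follows from semisimplicity (the metrics $g_1,g_2$ are non-degenerate with distinct eigenvalues). For $p=1,2$ I expect vanishing, and this should come out of Theorem~\ref{vanth} combined with the weight count: the bidegrees $(1,1)$ and $(2,2)$ either fall outside the allowed region of Theorem~\ref{vanth} for generic $N$, or can be killed directly. The delicate point is that Theorem~\ref{vanth} permits nonzero cohomology on the diagonal only when $k=0$, i.e.\ $(p,d)=(d,d)$, so it does \emph{not} by itself force vanishing at $(1,1)$ or $(2,2)$; I will need to analyze these two cases by hand, computing $\ker D_\l/\im D_\l$ explicitly from \eqref{diff-biham}.

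The heart of the argument is the case $p=3$, where I must identify $H^3_3(\hcA[\l])$ with $\bigoplus_i C^\infty(u^i)\,\th_i^0\th_i^1\th_i^2$. The strategy is: (i) by the weight count, a cocycle is a combination of monomials $\th_{i}^{a}\th_{j}^{b}\th_{k}^{c}$ with $a+b+c=3$ and $C^\infty(U)$-coefficients; (ii) apply $D_\l$ and $\l$-independence (split into the $\l^0$ and $\l^1$ parts, i.e.\ the two brackets $D(uf)$ and $D(f)$ must separately annihilate the class modulo image) to extract a system of PDEs on the coefficients; (iii) show that the Ferapontov relations \eqref{eq:dgammaijk}--\eqref{udgamma} — entering, as the introduction stresses, only through $D_\l^2=0$ — force all off-diagonal ($i,j,k$ not all equal) coefficients into the image of $D_\l$, leaving only the purely diagonal terms $\th_i^0\th_i^1\th_i^2$; and (iv) show the surviving coefficient in front of $\th_i^0\th_i^1\th_i^2$ is closed precisely when it depends only on $u^i$, and that no such term is exact. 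The claim ``$c_i$ depends only on $u^i$'' of \cref{centinv} is exactly what should fall out of step (iv), so I expect the separation-of-variables computation there to mirror the central-invariant formula.

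The main obstacle will be step (iii): disentangling the off-diagonal coefficients. The system coming from $D_\l=0$ couples the coefficients of different monomials through the rotation coefficients $\gamma_{ij}$, and showing that every off-diagonal cocycle is a coboundary requires explicitly constructing a primitive in $\hcA^2_3$ and using the Ferapontov equations to verify $D_\l$ of it matches. I anticipate this is where most of the genuine work lies, and where the semisimplicity hypothesis (distinct, non-constant eigenvalues, so that denominators like $u^k-u^i$ in \cref{centinv} are invertible) is essential to invert the relevant linear-algebraic operators. A clean way to organize this would be to filter $\hcA^3_3$ by the ``spread'' of the index set $\{i,j,k\}$ and run an induction, reducing three-distinct-index and two-distinct-index terms to single-index ones; I would set this up as an auxiliary lemma before assembling the final answer.
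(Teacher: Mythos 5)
Your step (i), the ``weight count,'' is false, and everything downstream depends on it. The generators $\th_i^0$ carry $\d_x$-degree zero, so nothing prevents a monomial in $\hcA^p_p$ from containing jet variables: for instance $u^{j,1}u^{k,1}u^{l,1}\,\th_a^0\th_b^0\th_c^0$ and $u^{j,2}u^{k,1}\,\th_a^0\th_b^0\th_c^0$ lie in $\hcA^3_3$, and so does $u^{j,1}\,\th_a^0\th_b^0\th_c^1$. Hence $\hcA^p_p$ is not spanned by $P(u)\,\th_{i_1}^{d_1}\cdots\th_{i_p}^{d_p}$ with $\sum d_j=p$; it is an infinite-rank module over $C^\infty(U)$, and your reduction to a finite-dimensional computation collapses, taking steps (ii)--(iv) with it. What you tried to get for free by counting degrees is precisely the homological core of the paper's argument: one first filters by $\deg_u$ (the number of $u^{i,s}$, $s\geq1$) and computes the cohomology of the leading term $\Delta_{-1}=\sum_i(-\l+u^i)f^i\hat{d}_i$, a family of de Rham-like differentials; this is what eliminates the mixed monomials ($\hcM$) and replaces $\hcA[\l]$ by $\hcC[\l]\oplus\bigoplus_i \hat{d}_i(\hcC_i)[\l]/(-\l+u^i)\hat{d}_i(\hcC_i)[\l]$ --- and even this reduced space still contains jet variables, through $\hat{d}_i(\hcC_i)$.

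Second, your fallback for $(p,d)=(1,1)$ and $(2,2)$ (``analyze these two cases by hand'') misses the mechanism by which they actually vanish, and it is not one a direct computation would stumble on easily. In the paper's second spectral sequence these two spots are \emph{not} zero on the first page: they carry $\bigoplus_i C^\infty(u^i)\th_i^1$ (from $H(\hcC[\l],\Delta_{0,1})$, \cref{pro14}) and $\bigoplus_i C^\infty(u^i)\th_i^0\th_i^2$ (from $H(\hat{d}_i(\hcC_i),\cD_i)$, \cref{lem13}) respectively, and they die only because the page-two differential, induced by the $\deg_{\theta^1}$-lowering part $\Delta_{0,-1}$ of $\Delta_0$, maps the first isomorphically onto the second via the term $\tfrac12 f^i\th_i^0\th_i^2\,\d/\d\th_i^1$. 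The same bookkeeping shows that $p=0$ and $p=3$ survive intact, which is where the answer $\bigoplus_i C^\infty(u^i)\th_i^0\th_i^1\th_i^2$ comes from. So the correct picture is not that the diagonal groups at $p=1,2$ can be ``killed directly,'' but that two genuinely nontrivial families of classes cancel each other at a higher page of the spectral sequence; any proof must produce this pairing (or an equivalent homotopy), and your outline contains no step that would.
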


\begin{theorem}\label{extravanishing}
The cohomology  \( H^p_d(\hcA [\l ], D_\lambda ) \) vanishes for 
\begin{equation}
\begin{cases}
p<d, \quad &d\geq 0\,; \\
p>d+N, \quad &d\geq 0\,; \\
d<p\leq d+N, \quad  &d>\max(3,N)\,; \\
p=3, &d=2\,.
\end{cases}
\end{equation}
\end{theorem}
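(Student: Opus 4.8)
The plan is to extract as much as possible directly from the coarse vanishing of \cref{vanth}, and to isolate the few bidegrees where the refined spectral sequences of the preceding sections are genuinely needed. The two unbounded regions are immediate, since \cref{vanth} already confines the nonvanishing cohomology to the strip $d\le p\le d+N$: thus $H^p_d(\hcA[\lambda])=0$ whenever $p<d$ or $p>d+N$. Writing $k=p-d$, the third region $d<p\le d+N$ splits according to $k$; for $k=N$ the group already vanishes once $d>N$, and for $1\le k\le N-1$ it already vanishes once $d>N+2$, in both cases by \cref{vanth}. Hence, under the hypothesis $d>\max(3,N)$, the only bidegrees not already killed are the boundary ones with $1\le k\le N-1$ and $\max(3,N)<d\le N+2$: explicitly $d\in\{N+1,N+2\}$ for $N\ge3$ and $(p,d)=(5,4)$ for $N=2$, while for $N=1$ there are none.

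For these finitely many boundary bidegrees I would pass to the relevant spectral sequence abutting to $H^p_d(\hcA[\lambda])$, whose first page is accessible through the subcomplexes of the preceding sections: the cohomology of $(\hat{d}_i(\hcC_i),\cD_i)$ computed in \cref{subcomplexdCi} and that of $(\hcC[\lambda],\Delta_{0,1})$ computed in \cref{subcomplexpd} identify the relevant first-page entries. Combined with the support constraint of \cref{vanth} and, where relevant, with the diagonal answer of \cref{resultp=d} governing the neighbouring $p=d$ classes, I expect these computations to show that the first page already vanishes in the boundary bidegrees above; since every later page is a subquotient of the first, the abutment $H^p_d(\hcA[\lambda])$ vanishes there as well. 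Matching which page entries feed into which bidegree is the main bookkeeping, but no essentially new computation is needed once the subcomplex cohomologies are in hand.

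The one genuinely hard case is the remaining bidegree $(p,d)=(3,2)$ (for $N=1$ it too is covered by \cref{vanth}). Here the spectral sequence does not collapse for formal reasons---the relevant first-page entry need not be zero---so instead one must show that every surviving class is killed by a higher differential. This is precisely the dedicated vanishing computation of \cref{subcomplexpdpu}, and I expect it to be the principal obstacle: the bidegree lies off the diagonal, just below the region controlled by \cref{resultp=d}, and it is exactly the group whose vanishing the main theorem needs. Indeed, in the long exact sequence~\eqref{eq:MainExact} with $(p,d)=(2,2)$ the term $H^2_2(\hcA[\lambda])$ is zero by \cref{resultp=d}, so $H^2_2(\hcF[\lambda])$ injects into $H^3_2(\hcA[\lambda])$; the vanishing of the latter is what forces $BH^2_2=0$ and thereby separates the trivial infinitesimal deformations from the central invariants surviving in $BH^2_3$.
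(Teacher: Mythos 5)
Your proposal is correct and follows essentially the same route as the paper: \cref{vanth} disposes of the two unbounded regions and all of the third region except the boundary degrees you identify, those boundary degrees (your $d\in\{N+1,N+2\}$ for $N\geq 3$, $(p,d)=(5,4)$ for $N=2$) are killed because \cref{lem13} shows the summands $H(\hat{d}_i(\hcC_i),\cD_i)$ of the relevant first page are supported only in $d=2,3$ while $H(\hcC[\lambda],\Delta_{0,1})$ is supported in $d\leq N$, and the bidegree $(3,2)$ is exactly the case delegated to \cref{prop32}. Your bookkeeping of the leftover bidegrees is accurate (indeed slightly more careful than the paper's one-line proof), so nothing further is needed.
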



Assuming these theorems, we can formulate our main result on the bi-Hamiltonian cohomology, from which Theorem~\ref{thm:mainstatement} follows:

\begin{theorem} \label{thm:mainmain}
The bi-Hamiltonian cohomology \( BH_d^p \) vanishes for
\begin{align}
\begin{cases}
p < d & d \geq 2\,; \\
p > d+N & d \geq 2\,; \\
d \leq p \leq d+N & d > \max (3,N)\,; \\
p = 2 & d = 2\,,
\end{cases}
\end{align}
unless \( (p,d) = (2,3)\), in which case it is isomorphic to \( \bigoplus_{i=1}^N C^\infty (u^i) \), the space of central invariants.
\end{theorem}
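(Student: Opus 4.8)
The plan is to derive \cref{thm:mainmain} from the long exact sequence \cref{eq:MainExact} together with the three computations of the cohomology of $(\hcA[\lambda], D_\lambda)$ recorded in \cref{vanth,resultp=d,extravanishing}. By the key lemma of \cite{lz13} we have $BH^p_d \cong H^p_d(\hcF[\lambda], d_\lambda)$ for $d \geq 2$, so it suffices to identify the groups $H^p_d(\hcF[\lambda])$ in the stated bi-degrees. First I would assemble the input: \cref{vanth} restricts the possible nonvanishing $H^p_d(\hcA[\lambda])$ to the wedge $p \in \{d, \dots, d+N\}$ with the additional constraints on $d$, and \cref{extravanishing} sharpens this by killing everything with $p < d$, everything with $p > d+N$, the interior $d < p \leq d+N$ for $d > \max(3,N)$, and the single group $(p,d)=(3,2)$. \cref{resultp=d} supplies the diagonal $p=d$, where only $H^0_0 \cong \R[\lambda]$ and $H^3_3 \cong \bigoplus_i C^\infty(u^i)\,\theta_i^0\theta_i^1\theta_i^2$ survive.

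Next I would read off $BH^p_d$ region by region from the segment
\begin{equation}
H^p_{d-1}(\hcA[\lambda]) \xrightarrow{\d_x} H^p_d(\hcA[\lambda]) \to H^p_d(\hcF[\lambda]) \to H^{p+1}_d(\hcA[\lambda]) \xrightarrow{\d_x} H^{p+1}_{d+1}(\hcA[\lambda]).
\end{equation}
For the vanishing claims the strategy is purely diagrammatic: whenever the two flanking $\hcA$-groups $H^p_d(\hcA[\lambda])$ and $H^{p+1}_d(\hcA[\lambda])$ both vanish, exactness forces $H^p_d(\hcF[\lambda]) = 0$, hence $BH^p_d = 0$. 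The regions $p<d$ ($d\geq 2$) and $p>d+N$ ($d\geq 2$) follow because in each case both neighbouring $\hcA$-groups lie outside the wedge of \cref{vanth}; the interior region $d \leq p \leq d+N$ with $d > \max(3,N)$ and the corner $(p,d)=(2,2)$ follow the same way after invoking \cref{extravanishing}. I would tabulate these cases explicitly so that each vanishing of a $\hcF$-group is justified by naming the two $\hcA$-groups that bracket it.

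The only interesting group is $BH^2_3 \cong H^2_3(\hcF[\lambda])$, and this is where the real work lies. The relevant portion of the exact sequence is
\begin{equation}
H^2_2(\hcA[\lambda]) \xrightarrow{\d_x} H^2_3(\hcA[\lambda]) \to H^2_3(\hcF[\lambda]) \to H^3_3(\hcA[\lambda]) \xrightarrow{\d_x} H^3_4(\hcA[\lambda]).
\end{equation}
By \cref{extravanishing} the interior off-diagonal group $H^2_3(\hcA[\lambda])$ vanishes (it has $p=2 < 3 = d$, so it falls in the first vanishing region), which collapses the left end and shows $H^2_3(\hcF[\lambda]) \hookrightarrow H^3_3(\hcA[\lambda])$ is injective. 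By \cref{resultp=d} we have $H^3_3(\hcA[\lambda]) \cong \bigoplus_i C^\infty(u^i)$, so the task reduces to identifying the image, i.e.\ to computing the connecting map $\d_x \colon H^3_3(\hcA[\lambda]) \to H^3_4(\hcA[\lambda])$ and showing it is zero. The main obstacle is precisely this last step: I expect it to hinge on a cocycle-level argument showing that each diagonal generator $\theta_i^0\theta_i^1\theta_i^2$, multiplied by a function of $u^i$ alone, is annihilated by $\d_x$ up to $D_\lambda$-exact terms, so that the whole of $H^3_3(\hcA[\lambda])$ survives into $H^3_4$ and hence maps to zero under the connecting homomorphism. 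Granting this, exactness gives $H^2_3(\hcF[\lambda]) \cong H^3_3(\hcA[\lambda]) \cong \bigoplus_{i=1}^N C^\infty(u^i)$, the space of central invariants, completing the proof.
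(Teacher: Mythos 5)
Your strategy coincides with the paper's own proof: use the isomorphism \( BH^p_d \cong H^p_d(\hcF[\l]) \) for \( d\geq 2\), and read off each group from the exact sequence \eqref{eq:MainExact} using \cref{vanth,resultp=d,extravanishing}; the diagrammatic treatment of all the vanishing regions is correct and complete. The one place you stop short is the final step for \( (p,d)=(2,3)\): you leave the vanishing of the map \( \d_x\colon H^3_3(\hcA[\l]) \to H^3_4(\hcA[\l]) \) as an expectation, to be settled by a cocycle-level argument showing each generator \( c_i(u^i)\,\th_i^0\th_i^1\th_i^2 \) is killed by \( \d_x \) up to \( D_\l \)-exact terms. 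No such computation is needed, and proposing one suggests you overlooked that the target group itself vanishes: \( H^3_4(\hcA[\l]) \) has \( p=3<4=d \), so it is zero by the first case of \cref{extravanishing} (indeed already by \cref{vanth}, since \( p<d \) lies outside the wedge \( d\leq p\leq d+N \)). Hence the connecting map is zero for trivial reasons, exactness gives \( H^2_3(\hcF[\l]) \cong H^3_3(\hcA[\l]) \cong \bigoplus_{i=1}^N C^\infty(u^i) \), and your proof closes exactly as the paper's does (the paper leaves this same vanishing implicit when it asserts the isomorphism \( H^2_3(\hcF[\l])\cong H^3_3(\hcA[\l]) \) from the vanishing of the second term alone). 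In short: correct approach, but the step you single out as the main obstacle is a one-line consequence of the theorems you had already invoked, not a new computation.
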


The regions of this theorem are visualized in \cref{vanishingregions}.
\begin{figure}[ht]
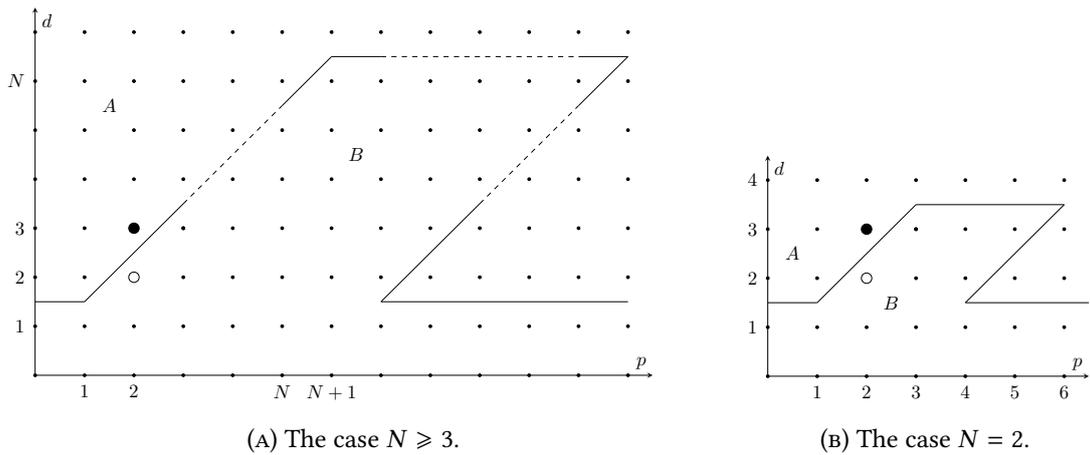

\begin{subfigure}[b]{0.66\textwidth}
\begin{lpic}{VanishingregionNgreaterthan2(0.65,0.65)}
\end{lpic}
\caption{The case \( N \geq 3\).}
\end{subfigure}
~
\begin{subfigure}[b]{0.33\textwidth}
\begin{lpic}{VanishingregionN=2(0.65,0.65)}
\end{lpic}
\caption{The case \( N = 2\).}
\end{subfigure}
\caption{All bi-Hamiltonian cohomology groups are zero in region \( A\), except for the black dot, which is given by the central invariants. All groups are unknown in region \( B\), except for the white dot, which vanishes.}\label{vanishingregions}
\end{figure}

\begin{proof}
Using the isomorphism between \( BH_d^p \) and \( H_d^P (\hcF [\l ] )\) in the required range, all the vanishing statement follow from the exact sequence~\eqref{eq:MainExact} as both the second and the fourth terms are zero. For $(p,d)=(2,3)$, the second term is zero, which implies that $H^2_3(\hcF[\lambda])\cong H^3_3(\hcA[\lambda])$, and $H^3_3(\hcA)\cong \bigoplus_{i=1}^N C^\infty (u^i)$ by \cref{resultp=d}.
\end{proof}


\begin{remark}
Observe that the cohomology of \( \hcA [\l ] \) is still unknown on the subcomplexes $p=d+1, \dotsc, d + N$ for $d< N$, unless  $(p,d)=(3,2)$ or unless \( N =1 \). The last case has been determined completely in \cite[Proposition 4]{cps14-2}. The key to determining the cohomology completely would likely lie in an extension of the proof of \cref{prop32}, where one would have to study more carefully the transformation \( \th_i^0 \mapsto \bar{\th}_i^0 \).  This transformation is trivial in the case \( N =1 \), so the subtlety does not occur there.
\end{remark}

We conclude this section with one more piece of notation that we use throughout the rest of the paper: for a multi-index \( I = \{ i_1, \dotsc, i_s\} \), we write \( f^I = \prod_{i \in I} f^i\), \( \theta_I^t = \theta_{i_1}^t \dotsb \theta_{i_s}^t \),   etc.

\section{The first vanishing theorem}
\label{vanishing}

In this section we give a proof of \cref{vanth}, based on the proof of \cite{cps15}. This section does not contain any new results, but has the main purpose of recalling some objects that will be used later.\par
The presentation of the proof given here is improved over \cite{cps15}, mainly by focusing less on the intricacies of spectral sequences and more on the structure and decomposition of the spaces and differentials involved. This exposition is somewhat less detailed as a result and the reader is expected to be familiar with spectral sequence techniques for graded complexes; more details can be found in \cite{cps15}.

\subsection{}
\label{degu}
Let $\deg_u$ be the degree on $\hcA$ defined by assigning
\begin{equation}
\deg_u u^{i,s} = 1, \quad  s >0
\end{equation}
and zero on the other generators. The operator $D_\lambda$ splits in the sum of its homogeneous components
\begin{equation}
D_\lambda = \Delta_{-1} + \Delta_0 + \dots \,,
\end{equation}
where $\deg_u \Delta_k= k$.

To the degree $\deg_u + \deg_\theta$ we associate a decreasing filtration of $\hcA[\lambda]$. Let us denote by $\tensor*[^1]{E}{}$ the associated spectral sequence. The zero page $\tensor*[^1]{E}{_0}$ is simply given by $\hcA[\lambda]$ with differential $\Delta_{-1}$:
\begin{equation}
(\tensor*[^1]{E}{_0} , \tensor*[^1]{d}{_0} ) = ( \hcA[\lambda], \Delta_{-1} ).
\end{equation}
To find the first page $\tensor*[^1]{E}{_1}$, we have to compute the cohomology of this complex.

\subsection{}
Let us compute the cohomology of the complex 
$(\hcA[\lambda], \Delta_{-1})$. 
The differential can be written as
\begin{equation}
\Delta_{-1} = \sum_i (-\lambda+u^i) f^i \hat{d}_i  
\end{equation}
where $\hat{d}_i$ is the de Rham-like differential
\begin{equation}
\hat{d}_i = \sum	_{s \geq 1} \theta_i^{s+1} \frac{\partial }{\partial u^{i,s}} .
\end{equation}

It is convenient to split $\hcA$ in a direct sum
\begin{equation}
\hcA = \hcC \oplus \left( \bigoplus_{i=1}^N  \hcC_i^\textnormal{nt} \right) \oplus \hcM.
\end{equation}
Here 
\begin{equation}
\hcC=C^\infty(U)[\theta^0_1,\dots,\theta^0_N,\theta^1_1,\dots,\theta^1_N],
\end{equation}
and
\begin{equation}
\hcC_i = \hcC \llbracket  \{ u^{i,s}, \theta_i^{s+1} \mid s\geq1 \} \rrbracket,
\end{equation}
while $\hcC_i^\textnormal{nt}$  denotes the subspace of $\hcC_i$ spanned by nontrivial monomials, i.e., all monomials that contain at least one of the variables $u^{i,s}$, $\theta_i^{s+1}$ for $s\geq 1$. 
By $\hcM$ we denote the subspace of $\hcA$ spanned by monomials which contain at least one of the mixed quadratic expressions
\begin{equation}
u^{i,s} u^{j,t}, \quad u^{i,s} \theta_j^{t+1}, \quad \theta_i^{s+1} \theta_j^{t+1}
\end{equation}
for some $s,t \geq1$ and $i\not= j$. 

\begin{lemma}
The differential $\Delta_{-1}$ leaves invariant each direct summand in 
\begin{equation} \label{split1}
\hcA[\lambda] = \hcC[\lambda] \oplus \left( \bigoplus_{i=1}^N  \hcC_i^\textnormal{nt}[\lambda] \right) \oplus \hcM[\lambda],
\end{equation}
and in particular maps $\hcC[\lambda]$ to zero.
\end{lemma}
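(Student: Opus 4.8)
The plan is to verify the claimed invariance directly by examining how $\Delta_{-1}$ acts on each generator appearing in the three summands. Recall that $\Delta_{-1} = \sum_i (-\lambda + u^i) f^i \hat{d}_i$ with $\hat{d}_i = \sum_{s\geq 1} \theta_i^{s+1} \partial/\partial u^{i,s}$. The crucial structural observation is that $\hat{d}_i$ only differentiates with respect to the variables $u^{i,s}$ for $s\geq 1$, and replaces each such variable by $\theta_i^{s+1}$. Neither the prefactor $(-\lambda+u^i)f^i$ nor the operator $\hat{d}_i$ introduces any \emph{new} dependent-variable index: everything stays within index $i$. This is what makes the decomposition respected.

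First I would dispose of $\hcC[\lambda]$. Since $\hcC = C^\infty(U)[\theta^0, \theta^1]$ contains none of the variables $u^{i,s}$ with $s\geq 1$, every $\hat{d}_i$ annihilates it, so $\Delta_{-1}$ maps $\hcC[\lambda]$ to zero; this simultaneously proves invariance of this summand and the "in particular" clause. Next I would treat $\hcM[\lambda]$. A monomial in $\hcM$ contains a mixed quadratic factor $u^{i,s}u^{j,t}$, $u^{i,s}\theta_j^{t+1}$, or $\theta_i^{s+1}\theta_j^{t+1}$ with $i\neq j$ and $s,t\geq 1$. Applying $\Delta_{-1}$, each term of $\hat{d}_k$ either misses such a monomial (if $k$ equals neither $i$ nor $j$) or differentiates one of the two mixed factors, converting a $u^{k,\bullet}$ into a $\theta_k^{\bullet}$ while leaving the other mixed factor intact; in every surviving case the resulting monomial still contains a mixed quadratic expression with distinct indices, hence lies in $\hcM$. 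I would check the three shapes of mixed factor case by case, noting that differentiating $u^{i,s}$ in $u^{i,s}\theta_j^{t+1}$ yields $\theta_i^{s+1}\theta_j^{t+1}$, still mixed.

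Finally, for the summands $\hcC_i^{\textnormal{nt}}[\lambda]$, I would argue that $\Delta_{-1}$ preserves the index $i$ and the nontriviality condition. A monomial in $\hcC_i^{\textnormal{nt}}$ uses only the generators of $\hcC$ together with $u^{i,s}, \theta_i^{s+1}$ ($s\geq 1$) for the single index $i$, and contains at least one such generator. Since $\hat{d}_k$ acting on it is nonzero only for $k=i$ (no $u^{j,\bullet}$ with $j\neq i$ is present) and merely trades a $u^{i,s}$ for $\theta_i^{s+1}$, the image stays inside $\hcC_i$ and remains nontrivial, so it lands in $\hcC_i^{\textnormal{nt}}[\lambda]$. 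The only point requiring slight care is confirming that no cross-index contributions are generated and that the nontrivial monomial cannot be sent entirely into $\hcC$; this holds because $\hat{d}_i$ preserves the total count of index-$i$ generators (replacing $u^{i,s}$ by $\theta_i^{s+1}$). I expect no genuine obstacle here: the entire statement is a bookkeeping verification driven by the single-index, degree-preserving nature of $\hat{d}_i$, and the mild subtlety is simply ensuring the casework on $\hcM$ is exhaustive across the three types of mixed quadratic factors.
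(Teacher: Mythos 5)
Your proposal is correct and follows essentially the same route as the paper, which simply records the four facts $\hat{d}_i(\hcC)=0$, $\hat{d}_i(\hcM)\subseteq\hcM$, $\hat{d}_i(\hcC_i^{\textnormal{nt}})\subseteq\hcC_i^{\textnormal{nt}}$, and $\hat{d}_i(\hcC_j^{\textnormal{nt}})=0$ for $j\neq i$ as ``easy to check''; your write-up just makes those checks explicit via the observation that $\hat{d}_i$ trades $u^{i,s}$ for $\theta_i^{s+1}$ without changing the index structure. The only nitpick is that for $\hcM$ the operator $\hat{d}_k$ with $k\notin\{i,j\}$ need not ``miss'' the monomial (it may hit a $u^{k,r}$ outside the mixed pair), but your closing invariant---that replacing $u^{k,\bullet}$ by $\theta_k^{\bullet}$ preserves the presence of a mixed quadratic factor---already covers that case.
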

\begin{proof}
It is easy to check that 
\begin{align}
&\hat{d}_i (\hcC) = 0 , 
&\hat{d}_i (\hcM) \subseteq \hcM, \\
&\hat{d}_i (\hcC_i^\textnormal{nt}) \subseteq \hcC_i^\textnormal{nt}, 
&\hat{d}_i (\hcC_j^\textnormal{nt} ) = 0 \quad i\not= j, 
\end{align}
from which the lemma follows immediately.
\end{proof}

The cohomology of $\hcA[\lambda]$ is therefore the direct sum of the cohomologies of the summands in the direct sum~\eqref{split1}, and in particular
\begin{equation}
H(\hcC[\lambda], \Delta_{-1}) = \hcC[\lambda] .
\end{equation}

Let us first observe that the cohomology of the de Rham complex $(\hcC_i, \hat{d}_i)$ is trivial in positive degree.
\begin{lemma}
\begin{equation}
H(\hcC_i, \hat{d}_i) = \hcC. 
\end{equation}
\end{lemma}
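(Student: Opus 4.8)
The plan is to compute the cohomology of the de Rham-like complex $(\hcC_i, \hat{d}_i)$, where
\begin{equation}
\hcC_i = \hcC \llbracket \{ u^{i,s}, \theta_i^{s+1} \mid s \geq 1 \} \rrbracket, \qquad \hat{d}_i = \sum_{s \geq 1} \theta_i^{s+1} \frac{\partial}{\partial u^{i,s}}.
\end{equation}
The key observation is that this is genuinely an (infinite) de Rham complex in disguise: the generators $u^{i,s}$ for $s \geq 1$ play the role of ``coordinates,'' the generators $\theta_i^{s+1}$ play the role of their ``differentials,'' and $\hat{d}_i$ is exactly the exterior derivative pairing each $du^{i,s}$-slot with the corresponding $\theta_i^{s+1}$. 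All the remaining generators (the coefficients in $\hcC$, and the variables $u^{j,s}, \theta_j^{s+1}$ for $j \neq i$) are inert: $\hat{d}_i$ annihilates them and they may be absorbed into the coefficient ring. So the claim $H(\hcC_i, \hat{d}_i) = \hcC$ is the statement that this de Rham complex has cohomology concentrated in degree zero, equal to the constants $\hcC$, i.e. a Poincar\'e-lemma--type result.

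\textbf{Construction of a homotopy.} The cleanest route is to exhibit an explicit contracting homotopy. First I would fix the $\deg_u$-grading restricted to the $i$-th variables: let $N_i$ be the operator counting the total number of $u^{i,s}$ and $\theta_i^{s+1}$ factors (for $s \geq 1$) in a monomial. Since $\hat{d}_i$ lowers the $u^{i,s}$-count by one and raises the $\theta_i^{s+1}$-count by one, it preserves $N_i$ but acts within each fixed-$N_i$ graded piece. I would then define the ``contraction'' (antiderivative) operator
\begin{equation}
\hat{h}_i = \sum_{s \geq 1} u^{i,s} \frac{\partial}{\partial \theta_i^{s+1}},
\end{equation}
which is dual to $\hat{d}_i$ and lowers the $\theta_i^{s+1}$-count while raising the $u^{i,s}$-count. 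A standard computation of the anticommutator (Cartan-type homotopy formula) should give $\hat{d}_i \hat{h}_i + \hat{h}_i \hat{d}_i = L_i$, where $L_i = \sum_{s \geq 1}\bigl( u^{i,s} \tfrac{\partial}{\partial u^{i,s}} + \theta_i^{s+1}\tfrac{\partial}{\partial \theta_i^{s+1}}\bigr)$ is the Euler-type operator measuring $N_i$. On each monomial this $L_i$ acts as multiplication by the scalar $N_i$, which is invertible (equal to the identity after dividing by $N_i$) precisely on the subspace where $N_i \geq 1$. Hence on $\ker L_i^{\perp}$, i.e. on all of $\hcC_i$ with $N_i \geq 1$ removed, the operator $\tfrac{1}{N_i}\hat{h}_i$ is a genuine contracting homotopy, forcing the cohomology in positive $N_i$-degree to vanish.

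\textbf{Conclusion.} It remains to treat the degree-zero piece $N_i = 0$, which is exactly $\hcC$ (no $u^{i,s}$ or $\theta_i^{s+1}$ with $s \geq 1$ appear): there $\hat{d}_i$ acts as zero and there is nothing in the image of $\hat{d}_i$ landing in this piece, so the cohomology in $N_i$-degree zero is all of $\hcC$. Combining the two cases gives $H(\hcC_i, \hat{d}_i) = \hcC$, as claimed. The main technical point to be careful about is the completed (formal power series) nature of $\hcC_i$: because $\llbracket \cdot \rrbracket$ denotes formal series, I must check that $\hat{h}_i$ and the decomposition into finite-dimensional $N_i$-graded pieces are well-defined on the completion and that the homotopy formula holds termwise, so that the argument is not merely formal on polynomials but extends to the full space. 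Since $\hat{d}_i$, $\hat{h}_i$ and $L_i$ all preserve each bidegree $(\deg_u, \deg_\theta)$ and map each finite-rank graded component to itself, this causes no genuine difficulty: the computation reduces to the polynomial case in each fixed bidegree and then assembles over the product.
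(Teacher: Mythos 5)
Your proof is correct and is precisely the argument the paper has in mind: the paper's own proof of this lemma is the one-line remark that it is ``completely analogous to the standard proof of the Poincar\'e lemma,'' and the Koszul-type contracting homotopy $\hat{h}_i = \sum_{s\geq 1} u^{i,s}\frac{\partial}{\partial \theta_i^{s+1}}$ with the Euler operator $L_i = [\hat{d}_i,\hat{h}_i]_+$ that you construct is exactly that standard algebraic argument, made explicit. Your closing remark on the formal-series completion (the termwise, $N_i$-graded action of the homotopy) is the right way to see that the polynomial computation extends to all of $\hcC_i$.
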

\begin{proof}
The proof is completely analogous to the standard proof of the Poincar\'e lemma.
\end{proof}

In particular we have that 
\begin{equation} \label{poincare1}
H(\hcC_i^\textnormal{nt}, \hat{d}_i ) = 0 ,
\end{equation}
therefore the kernel of $\hat{d}_i$ in $\hcC_i^\textnormal{nt}$ coincides with $\hat{d}_i(\hcC_i)$.

\begin{lemma}
\begin{equation}
H(\hcC_i^\textnormal{nt}[\lambda], \Delta_{-1} ) = \frac{\hat{d}_i (\hcC_i)[\lambda]}{(-\lambda+u^i)\hat{d}_i (\hcC_i)[\lambda]}.
\end{equation}
\end{lemma}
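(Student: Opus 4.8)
The plan is to reduce the computation to a single de Rham-like differential twisted by an invertible function and a factor linear in $\l$, and then read off kernel and image directly. First I would use the commutation relations recorded in the proof of the splitting lemma, namely $\hat{d}_j(\hcC_i^\textnormal{nt}) = 0$ for $j \neq i$, to observe that the restriction of $\Delta_{-1}$ to the invariant summand $\hcC_i^\textnormal{nt}[\l]$ collapses to the single term
\begin{equation}
\Delta_{-1}\big|_{\hcC_i^\textnormal{nt}[\l]} = (-\l + u^i) f^i \hat{d}_i\,.
\end{equation}
The three factors on the right commute: both $f^i$ and $u^i = u^{i,0}$ depend only on the zeroth jets, which are annihilated by the operators $\d/\d u^{i,s}$ with $s\geq 1$ appearing in $\hat{d}_i$, so multiplication by $f^i$ and by $(-\l+u^i)$ each commutes with $\hat{d}_i$.

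Next I would compute the kernel. Multiplication by $(-\l+u^i)$ is injective on $\hcC_i^\textnormal{nt}[\l]$: writing an element as a polynomial $\sum_k \omega_k \l^k$ with $\omega_k \in \hcC_i^\textnormal{nt}$ and comparing top-degree coefficients in $\l$ shows that $(-\l+u^i)\sum_k \omega_k\l^k=0$ forces all $\omega_k=0$. Since $f^i$ is a non-vanishing, hence invertible, element of $C^\infty(U)$, it follows that $\ker \Delta_{-1} = \ker \hat{d}_i$ on $\hcC_i^\textnormal{nt}[\l]$. Because $\hat{d}_i$ acts coefficient-wise on polynomials in $\l$, its kernel on $\hcC_i^\textnormal{nt}[\l]$ is obtained by extension of scalars from its kernel on $\hcC_i^\textnormal{nt}$, which by~\eqref{poincare1} coincides with $\hat{d}_i(\hcC_i)$; hence $\ker \Delta_{-1} = \hat{d}_i(\hcC_i)[\l]$.

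For the image, I would use that $f^i$ is invertible and commutes with $\hat{d}_i$ to absorb it, via $f^i \hat{d}_i(\hcC_i) = \hat{d}_i(f^i \hcC_i) = \hat{d}_i(\hcC_i)$, where the last equality uses that $\hcC_i$ is a module over $C^\infty(U)$ and $f^i$ is a unit. This gives
\begin{equation}
\im \Delta_{-1} = (-\l+u^i) f^i \hat{d}_i\big(\hcC_i^\textnormal{nt}[\l]\big) = (-\l + u^i)\hat{d}_i(\hcC_i)[\l]\,.
\end{equation}
Taking the quotient of kernel by image then yields the claimed formula for $H(\hcC_i^\textnormal{nt}[\l], \Delta_{-1})$.

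The argument is essentially formal once the reduction to $(-\l+u^i)f^i\hat{d}_i$ is in place. I expect the only points needing care to be the bookkeeping that justifies stripping off the unit $f^i$ symmetrically on the kernel and image sides, and the injectivity of the linear factor $(-\l+u^i)$; both rest on $f^i$ being a nowhere-vanishing function of the zeroth jet coordinates alone, which is exactly what guarantees that the three operators commute and that $f^i$ preserves the submodule $\hat{d}_i(\hcC_i)$.
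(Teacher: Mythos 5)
Your proposal is correct and follows essentially the same route as the paper's proof: restrict $\Delta_{-1}$ to $(-\lambda+u^i)f^i\hat{d}_i$ on the invariant summand, identify the kernel with that of $\hat{d}_i$ (which is $\hat{d}_i(\hcC_i)[\lambda]$ by the Poincar\'e-lemma vanishing), and identify the image with $(-\lambda+u^i)\hat{d}_i(\hcC_i)[\lambda]$. The paper states these steps without elaboration; your additional justifications (injectivity of multiplication by $-\lambda+u^i$, commutation of $f^i$ and $u^i$ with $\hat{d}_i$, and absorption of the unit $f^i$) are exactly the bookkeeping it leaves implicit.
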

\begin{proof}
On $\hcC_i^\textnormal{nt}[\lambda]$ the differential $\Delta_{-1}$ is equal to $(-\lambda+u^i) f^i \hat{d}_i$. Its kernel coincides with the kernel of $\hat{d}_i$ on $\hcC_i^\textnormal{nt}[\lambda]$, which is $d_i(\hcC_i)[\lambda]$. Its image is $(-\lambda+u^i)\hat{d}_i (\hcC_i)[\lambda]$.
\end{proof}

Finally we prove that the complex $(\hcM[\lambda], \Delta_{-1})$ is acyclic.
\begin{lemma}
\begin{equation}
H(\hcM[\lambda], \Delta_{-1}) =0.
\end{equation}
\end{lemma}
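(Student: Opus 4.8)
The plan is to show that the complex $(\hcM[\lambda], \Delta_{-1})$ is acyclic by exhibiting a homotopy operator, or equivalently by decomposing $\hcM$ into subcomplexes on which the Poincar\'e-type argument applies. The key structural observation is that $\Delta_{-1} = \sum_i (-\lambda + u^i) f^i \hat{d}_i$ is a sum of commuting de Rham-like differentials, each $\hat{d}_i$ acting only on the variables $u^{i,s}, \theta_i^{s+1}$ for $s \geq 1$ attached to the single index $i$. Since a monomial in $\hcM$ by definition involves the nontrivial variables of at least two distinct indices, say $i$ and $j$, the differential $\Delta_{-1}$ restricted to $\hcM$ genuinely involves at least two of the de Rham operators acting nontrivially.

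First I would fix the cohomological and standard degrees and filter $\hcM[\lambda]$ by the number of distinct indices $i$ for which the nontrivial variables $u^{i,s}, \theta_i^{s+1}$ ($s\geq 1$) appear in a monomial. More concretely, for each fixed index $i$ appearing nontrivially, one can factor the relevant piece of a monomial as a de Rham form in the $i$-variables times a remainder, and since $\hat{d}_i$ computes the reduced de Rham cohomology of those variables — which vanishes in positive degree by the Poincar\'e lemma already established in the preceding lemma — one gets contractibility in the $i$-direction. The coefficient $(-\lambda + u^i)f^i$ is a nonzero scalar in the ground ring $C^\infty(U)[\lambda]$ localized appropriately (it is a nonzerodivisor, being a nonzero polynomial in $\lambda$), so multiplying by it does not affect exactness: the kernel of $(-\lambda + u^i) f^i \hat{d}_i$ on the $i$-nontrivial part equals the kernel of $\hat{d}_i$, and its image is the corresponding scaled image.

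The cleanest route is to pick the \emph{smallest} index $i$ that occurs nontrivially in a given monomial and use the associated homotopy $\hat{h}_i$ inverting $\hat{d}_i$ (the standard integration operator from the Poincar\'e lemma) to build a contracting homotopy $h$ on all of $\hcM[\lambda]$. I would verify that $\Delta_{-1} h + h \Delta_{-1} = \Id$ on $\hcM[\lambda]$ by checking that the cross-terms coming from the other $\hat{d}_j$ ($j \neq i$) either preserve the choice of smallest index or are cancelled, using that distinct $\hat{d}_i, \hat{d}_j$ and their homotopies commute and act on disjoint sets of variables. Because $\hcM$ requires at least two indices to appear, no monomial in $\hcM$ is annihilated by all the $\hat{d}_i$ simultaneously, which is exactly what guarantees that the homotopy is defined everywhere and leaves no harmonic representatives.

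The main obstacle will be bookkeeping the interaction between the scalar factors $(-\lambda + u^i)f^i$ and the homotopy operators: since these factors differ from index to index, a naive homotopy built from a single $\hat{d}_i$ picks up error terms proportional to $(-\lambda+u^j)f^j$ for $j\neq i$, and one must check these errors land again in the image of $\Delta_{-1}$ or are controlled by the filtration by number of distinct nontrivial indices. I expect that organizing the argument as a spectral sequence or an induction on this number-of-indices filtration — rather than producing one global homotopy in closed form — will be the most efficient way to close the gap, reducing acyclicity of $\hcM[\lambda]$ to the already-proven vanishing of $H(\hcC_i^\textnormal{nt}, \hat d_i)$ in positive degree applied one index at a time.
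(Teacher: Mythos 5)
There is a genuine gap, and it sits in the key claim of your second paragraph. The assertion that multiplying $\hat{d}_i$ by the nonzerodivisor $(-\lambda+u^i)f^i$ ``does not affect exactness'' is false: a nonzerodivisor preserves kernels but strictly shrinks images. This is precisely the source of the nontrivial cohomology in the immediately preceding lemma of the paper, where $H(\hcC_i^\textnormal{nt}[\lambda],\Delta_{-1})=\hat{d}_i(\hcC_i)[\lambda]/(-\lambda+u^i)\hat{d}_i(\hcC_i)[\lambda]\neq 0$ even though $(\hcC_i^\textnormal{nt},\hat{d}_i)$ is acyclic; if your reasoning were valid, it would prove that this group vanishes as well, a contradiction. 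For the same reason, the contracting homotopy you propose cannot exist: since the Poincar\'e homotopy $\hat{h}_i$ inverts $\hat{d}_i$ but you are not allowed to divide by $(-\lambda+u^i)$ inside $C^\infty(U)[\lambda]$, the best identity one can extract (even after all sign bookkeeping with the cross terms) is $\Delta_{-1}\hat{h}_i+\hat{h}_i\Delta_{-1}=(-\lambda+u^i)f^i\,\pi_i$, where $\pi_i$ is the projection onto monomials in which the index $i$ occurs nontrivially --- not the identity. Relatedly, your claim that no monomial of $\hcM$ is annihilated by all the $\hat{d}_k$ simultaneously is false: $\theta_i^2\theta_j^2\in\hcM$ is killed by every $\hat{d}_k$.

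The missing idea is that the vanishing is \emph{not} a formal consequence of Poincar\'e lemmas applied one index at a time; it uses that at least two distinct indices occur nontrivially \emph{together with} the invertibility of $u^i-u^j$ on $U$ (the domain avoids the diagonals). Concretely, decompose $\hcM[\lambda]$ by the set $S$ of nontrivially occurring indices (this decomposition is preserved by $\Delta_{-1}$); the homotopy identity above shows that $H(\hcM_S[\lambda],\Delta_{-1})$ is annihilated by $(-\lambda+u^i)f^i$ for every $i\in S$, hence, since $|S|\geq 2$ and the $f^i$ are units, by $(-\lambda+u^i)-(-\lambda+u^j)=u^i-u^j$, which is invertible, so the cohomology vanishes. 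Your own counterexample illustrates this: $\theta_i^2\theta_j^2$ is closed and exact only because $(u^i-u^j)\,\theta_i^2\theta_j^2=\tfrac{1}{f^i}\Delta_{-1}(u^{i,1}\theta_j^2)+\tfrac{1}{f^j}\Delta_{-1}(u^{j,1}\theta_i^2)$; no single-index homotopy produces it. The paper's induction implements the same mechanism in a different packaging: the cohomology of $(-\lambda+u^N)f^N\hat{d}_N$ on the $N$-nontrivial part is a quotient in which $\lambda$ is set equal to $u^N$, and only after this substitution do the remaining coefficients become the invertible functions $(u^i-u^N)f^i$, at which point the de Rham/Poincar\'e argument applies. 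Your closing suggestion (induction on the number of nontrivial indices) is structurally close to this, but as stated it never performs the substitution $\lambda=u^i$ and never invokes the invertibility of $u^i-u^j$, so it does not close the gap.
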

\begin{proof} This lemma can be proved by induction on $N$. Denote, for convenience, the corresponding space and the differential by $\hcM[\lambda]_{(N)}$ and $\Delta_{-1,(N)}$. We also use in the proof the notation $\hcA_{(N)}$ and $\hcC_{(N)}$. 
	
The differential $\Delta_{-1}$ is naturally the sum of two commuting differentials, 
\begin{equation}
	\Delta_{-1,(N)}=\Delta_{-1,(N-1)}+(-\lambda+u^N) f^N \hat d_N.
\end{equation} 
	The cohomology of $(-\lambda+u^N) f^N \hat{d}_N$ on  $\hcM[\lambda]_{(N)}$ is equal to the direct sum of two subcomplexes, $\hcC_{(N)}\otimes_{\hcC_{(N-1)}}\hcM[\lambda]_{(N-1)}$ and 
	$$
	\frac{\hat d_N(\hcC^\textnormal{nt}_N)\otimes_{\hcC_{(N)}} \left(\left( \bigoplus_{i=1}^{N-1}  \hcC_i^\textnormal{nt}[\lambda] \right) \oplus\hcC_{(N)}\otimes_{\hcC_{(N-1)}}\hcM[\lambda]_{(N-1)}\right)}{(-\lambda+u^N)}. 
	$$
	On the first component the induced differential is equal to $\Delta_{-1,(N-1)}$, so we can use the induction assumption. On the second component the induced differential is equal to $$\left.\left(\Delta_{-1,(N-1)}\right)\right|_{\lambda=u^N},$$ 
	so, up to rescaling by non-vanishing functions, it is a de Rham-like differential acting only on the second factor of the tensor product. This second factor can be identified with $\hcC_{(N)}\otimes_{\hcC_{(N-1)}} \hcA_{(N-1)}/\hcC_{(N-1)}$, so the possible non-trivial cohomology is quotiented out (cf. the standard proof of the Poincar\'e lemma).
\end{proof}

This completes the computation of the cohomology of the complex $(\hcA[\lambda],\Delta_{-1})$:
\begin{proposition}
\begin{equation} \label{cohA}
H(\hcA[\lambda], \Delta_{-1}) = \hcC[\lambda] \oplus \left(
\bigoplus_{i=1}^N \frac{\hat{d}_i (\hcC_i)[\lambda]}{(-\lambda+u^i)\hat{d}_i (\hcC_i)[\lambda]} 
\right)
\end{equation}
\end{proposition}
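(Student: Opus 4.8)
The plan is to recognize that \eqref{cohA} is simply the assembly of the summand-wise computations carried out in the preceding lemmas, once one observes that cohomology commutes with the direct-sum decomposition of the complex. So the proof will be a bookkeeping step rather than a new computation.

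First I would invoke the first lemma above, which shows that $\Delta_{-1}$ respects the splitting \eqref{split1}, i.e. that $\Delta_{-1}$ is block-diagonal with respect to it. In particular the relations $\hat{d}_i(\hcC_j^\textnormal{nt}) = 0$ for $i \neq j$ recorded there guarantee that $\Delta_{-1}$ preserves each individual summand $\hcC_i^\textnormal{nt}[\lambda]$, so that the middle block is itself an internal direct sum of subcomplexes indexed by $i$. Since taking cohomology commutes with finite direct sums of complexes, this yields
\begin{equation*}
H(\hcA[\lambda], \Delta_{-1}) = H(\hcC[\lambda], \Delta_{-1}) \oplus \left( \bigoplus_{i=1}^N H(\hcC_i^\textnormal{nt}[\lambda], \Delta_{-1}) \right) \oplus H(\hcM[\lambda], \Delta_{-1}).
\end{equation*}
It then remains to substitute the three pieces already established: $\Delta_{-1}$ vanishes identically on $\hcC[\lambda]$, so $H(\hcC[\lambda], \Delta_{-1}) = \hcC[\lambda]$; the lemma on $\hcC_i^\textnormal{nt}$ gives $H(\hcC_i^\textnormal{nt}[\lambda], \Delta_{-1}) = \hat{d}_i(\hcC_i)[\lambda]/\bigl((-\lambda+u^i)\hat{d}_i(\hcC_i)[\lambda]\bigr)$; and the acyclicity lemma gives $H(\hcM[\lambda], \Delta_{-1}) = 0$. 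The last summand drops out and the first two reassemble precisely into the right-hand side of \eqref{cohA}.

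Since every constituent lemma is already in hand, the proposition is a pure assembly with no genuine obstacle: all of the analytic content lives in the lemmas, in particular in the acyclicity of $(\hcM[\lambda], \Delta_{-1})$ proved by induction on $N$, and in the Poincaré-type computation \eqref{poincare1} identifying $\ker \hat{d}_i$ with $\hat{d}_i(\hcC_i)$ on $\hcC_i^\textnormal{nt}$. The only point requiring care at this final step is the observation that the middle block decomposes over $i$, which is exactly the off-diagonal vanishing $\hat{d}_i(\hcC_j^\textnormal{nt}) = 0$ noted above; without it one could not pull the direct sum outside the cohomology functor.
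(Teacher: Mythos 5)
Your proposal is correct and matches the paper exactly: the paper states this proposition as the culmination of the same sequence of lemmas (the invariance of the splitting \eqref{split1} under $\Delta_{-1}$, the Poincar\'e-type identification \eqref{poincare1}, the computation of $H(\hcC_i^\textnormal{nt}[\lambda],\Delta_{-1})$, and the acyclicity of $\hcM[\lambda]$), with the final assembly left implicit. Your only addition — spelling out that cohomology commutes with the finite direct sum and that the off-diagonal vanishing $\hat{d}_i(\hcC_j^\textnormal{nt})=0$ is what makes the middle block split over $i$ — is precisely the bookkeeping the paper takes for granted.
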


%
%
%
%

\subsection{}\label{sec:1E1}

The first page $\tensor*[^1]{E}{_1}$ of the first spectral sequence is given by the cohomology of the complex $H(\hcA[\lambda], \Delta_{-1})$ with the differential induced by the operator $\Delta_0$:
\begin{equation}
(\tensor*[^1]{E}{_1} , \tensor*[^1]{d}{_1} ) = (H(\hcA[\lambda], \Delta_{-1}) , \Delta_0 ).
\end{equation}
We recall the formula for the operator $\Delta_0$ in the appendix.
To get the second page $\tensor*[^1]{E}{_2}$ of the first spectral sequence we have to compute the cohomology of this complex.

Let $\deg_{\theta^1}$ be the degree on $\hcA$ defined by setting
\begin{equation}
\deg_{\theta^1} \theta^1_i =1 \quad i=1, \dots ,N
\end{equation}
and zero on the other generators. The operator $\Delta_0$ splits in its homogeneous components
\begin{equation}
\Delta_0 = \Delta_{0,1} + \Delta_{0,0}+\Delta_{0,-1}
\end{equation}
where $\deg_{\theta^1} \Delta_{0,k} =k$.

To the degree $\deg_{\theta^1} - \deg_{\theta}$ we associate a decreasing filtration of $H(\hcA[\lambda], \Delta_{-1})$, and denote by $\tensor*[^2]{E}{}$ the associated spectral sequence. 
The zero page $\tensor*[^2]{E}{_0}$ is given by $H(\hcA[\lambda], \Delta_{-1})$ with the differential induced by $\Delta_{0,1}$:
\begin{equation}
(\tensor*[^2]{E}{_0} , \tensor*[^2]{d}{_0} ) = (H(\hcA[\lambda], \Delta_{-1}) , \Delta_{0,1} ).
\end{equation}
The first page $\tensor*[^2]{E}{_1}$ is given by the cohomology of this complex.

\subsection{}
\label{D01}

To obtain a simple expression for the action of $\Delta_{0,1}$ on the cohomology~\eqref{cohA}, it is convenient to perform a change of basis in $\hcA$.
Let $\Psi$ be the invertible operator that rescales the generators of $\hcA$ as follows
\begin{equation}
u^{i,s} \mapsto (f^i)^{\frac{s}2} u^{i,s} , \quad \theta_i^s \mapsto (f^i)^{\frac{s+1}2} \theta_i^s .
\end{equation}
The operator $\Delta_{0,1}$ has a  simpler form when conjugated with $\Psi$, and since $\Psi$ leaves invariant all the subspaces that we consider, such conjugation does not affect the computation of the cohomology.

\begin{lemma} \label{lemmaD01}
The operator $\Delta_{0,1}$ acts on the cohomology~\eqref{cohA} as $\Psi \tilde{\Delta}_{0,1} \Psi^{-1}$, where
\begin{equation}
\tilde\Delta_{0,1} =\sum_i (-\lambda+u^i) \theta_i^1 \frac{\partial }{\partial u^i} 
+\sum_{i,j} (-\lambda+u^j) (\gamma_{ij} \theta_j^1 - \gamma_{ji} \theta_i^1 ) \theta^0_j \frac{\partial }{\partial \theta_i^0} 
+\sum_i \theta_i^1 \cE_i
\end{equation}
and leaves invariant each of the summands in \cref{cohA}. Here $\cE_i$ is the Euler operator that multiplies any monomial $m$ by its weight $w_i(m)$ defined by
\begin{equation}
w_i(u^{i,s}) = \frac{s}2 +1 , \quad w_i(\theta_i^{s-1}) = \frac{s}2-1 \quad s\geq1
\end{equation}
and zero on the other generators.

\end{lemma}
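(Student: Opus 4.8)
The plan is to prove the statement by a direct computation in two stages: first I would extract the bidegree-$(0,1)$ part $\Delta_{0,1}$ of $D_\lambda$ from the explicit formula \eqref{diff-biham}, and then verify the conjugation identity $\Delta_{0,1}=\Psi\tilde\Delta_{0,1}\Psi^{-1}$ by matching the two sides monomial-type by monomial-type on the cohomology \eqref{cohA}. For the first stage I would run through the four blocks of \eqref{diff-biham} and keep only the summands of degree $\deg_u=0$ and $\deg_{\theta^1}=1$. Using $\partial^s(g^i\theta_i^1)=\sum_{a+b=s}\binom{s}{a}(\partial^a g^i)\theta_i^{1+b}$ and the analogous Leibniz expansions for the other lines, one checks that the last line (the $\theta^0\theta^0\,\partial/\partial\theta$ block) never reaches $\deg_u=0$ and drops out, while the surviving contributions come only from the $s=0$ term of the first line, the $s=1$ term of the second line, and the $s=0,1$ terms of the third line. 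This yields an explicit expression for $\Delta_{0,1}$ containing both base derivatives $\frac{\d}{\d u^i},\frac{\d}{\d\theta_i^0}$ and jet derivatives $\frac{\d}{\d u^{i,1}},\frac{\d}{\d\theta_i^1}$, with coefficients built from $g^i=(-\lambda+u^i)f^i$ and its first derivatives $\d_j g^i$.

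For the second stage I would conjugate by $\Psi$. Since $\Psi$ acts diagonally on monomials by a $u$-dependent scalar factor $\rho$, one has $\Psi^{-1}\frac{\d}{\d u^i}\Psi=\frac{\d}{\d u^i}+\d_{u^i}\!\log\rho$, so the base-derivative term of $\tilde\Delta_{0,1}$ necessarily produces correction terms proportional to $\d_{u^i}\!\log\rho$. Writing $\d_i\log f^j=-2\gamma_{ij}H_i/H_j$ for $i\neq j$ (the definition of the rotation coefficients, via $\d_i H_j=H_i\gamma_{ij}$ and $H_i=(f^i)^{-1/2}$) and observing that the $f$-powers recorded in $\rho$ track exactly the half-integer weights $w_i$ entering $\cE_i$, these corrections reproduce precisely the $f$-dependent parts of the jet-derivative terms found in the first stage. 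I would then match terms by their $\theta$-monomial type: the purely base part gives $\sum_i(-\lambda+u^i)\theta_i^1\frac{\d}{\d u^i}$; the diagonal jet contributions (from $\frac{\d}{\d u^{i,1}}$ and $\frac{\d}{\d\theta_i^1}$ together with the scaling corrections) assemble into the Euler term $\sum_i\theta_i^1\cE_i$; and the off-diagonal $f$-derivatives collapse, using the definition of $\gamma_{ij}$ and, where a symmetric combination appears, the Ferapontov relations \eqref{eq:dgammaijk}--\eqref{udgamma}, into the rotation term $\sum_{i,j}(-\lambda+u^j)(\gamma_{ij}\theta_j^1-\gamma_{ji}\theta_i^1)\theta_j^0\frac{\d}{\d\theta_i^0}$.

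Finally I would check invariance of the summands of \eqref{cohA}: the first and the $\gamma$-terms involve only $u,\theta_i^0,\theta_i^1$ and hence map $\hcC[\lambda]$ into itself, while $\cE_i$ is a weight operator preserving both $\hcC[\lambda]$ and each $\hat{d}_i(\hcC_i)[\lambda]$; any genuinely mixed monomial produced in the raw expression lands in $\hcM[\lambda]$, which is acyclic for $\Delta_{-1}$, so it vanishes on \eqref{cohA}. The step I expect to be the main obstacle is the bookkeeping in the second stage: keeping careful track of Koszul signs and of the $f$-power corrections coming from $\Psi$, and showing that the jet-derivative terms (those with $\frac{\d}{\d u^{i,1}}$ and $\frac{\d}{\d\theta_i^1}$) are \emph{exactly} absorbed into $\cE_i$ and into the conjugation corrections of the first term, rather than leaving behind spurious operators on $\hcC[\lambda]$. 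It is precisely here that the specific half-integer weights in the definition of $\cE_i$ and the precise exponents in the rescaling $\Psi$ are indispensable.
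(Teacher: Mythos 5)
Your overall strategy (extract $\Delta_{0,1}$, conjugate by $\Psi$, kill leftover terms by cohomological vanishing) is the paper's strategy, but your stage 1 contains a genuine error that propagates. The component $\Delta_{0,1}$ is \emph{not} the finite truncation you describe: for every $s\geq 1$ the first line of \eqref{diff-biham} contributes a term $\partial_j g^i\, u^{j,s}\,\theta_i^1\,\partial/\partial u^{i,s}$ (take $b=0$ in the Leibniz expansion and the part of $\partial^s g^i$ linear in jet variables), and likewise the second and third lines contribute terms with $\partial/\partial u^{i,s}$ and $\partial/\partial \theta_i^s$ for \emph{all} $s$; compare the appendix formula \eqref{eq:Delta0}, each block of which has a nonzero $\deg_{\theta^1}=1$ component for every $s$. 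These higher jet-derivative terms cannot be discarded, because the summands $\hat{d}_i(\hcC_i)[\lambda]/(-\lambda+u^i)\hat{d}_i(\hcC_i)[\lambda]$ of \eqref{cohA} contain monomials in $u^{i,s},\theta_i^{s+1}$ of arbitrarily high order. Indeed, the Euler operator $\cE_i$ in the target formula assigns nonzero weights $w_i(u^{i,s})=\tfrac{s}{2}+1$, $w_i(\theta_i^{s-1})=\tfrac{s}{2}-1$ to \emph{all} these generators, so it cannot be assembled, as you propose, from $\partial/\partial u^{i,1}$, $\partial/\partial\theta_i^1$ and the $\Psi$-correction of $\partial/\partial u^i$ alone (that correction carries $f$-dependent coefficients $\partial_i\log f^j$, not the universal half-integer weights).

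The second gap is that the statement is not an operator identity on $\hcA[\lambda]$ up to terms landing in $\hcM[\lambda]$, so your two mechanisms (exact term-matching plus mixed monomials dying in $\hcM[\lambda]$) are insufficient. The actual computation gives $\Psi^{-1}\Delta_{0,1}\Psi=\tilde\Delta_{0,1}$ plus two explicit families of extra terms summed over all $s$; their diagonal $j=i$ parts preserve $\hcC_i^{\textnormal{nt}}[\lambda]$, hence neither cancel identically nor fall into $\hcM[\lambda]$. They act as zero on \eqref{cohA} only because they carry a prefactor $(-\lambda+u^i)$ and the $i$-th summand is a quotient by $(-\lambda+u^i)\hat{d}_i(\hcC_i)[\lambda]$, i.e.\ one may set $\lambda=u^i$ there; your proposal never invokes this quotient structure. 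Relatedly, invariance of $\hat{d}_i(\hcC_i)[\lambda]$ does not follow from inspecting $\cE_i$: the paper deduces it from the facts that the induced operator maps this summand into $\hcC_i^{\textnormal{nt}}[\lambda]$ and, being an operator on cohomology, must land in the kernel of $\hat{d}_i$ there, which equals $\hat{d}_i(\hcC_i)[\lambda]$ by acyclicity of $(\hcC_i^{\textnormal{nt}},\hat{d}_i)$. Finally, no Ferapontov relations \eqref{eq:dgammaijk}--\eqref{udgamma} are needed for this lemma (the paper stresses they enter only through $D_\lambda^2=0$); if your matching appears to require them, that is a symptom of the mis-truncation rather than a feature of the proof.
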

\begin{proof}
Recall that $\Delta_{0,1}$ is the $\deg_u =0$ and $\deg_{\theta^1} = 1$ homogeneous component of the differential $D_\lambda$. An explicit expression can be found in~\cite{cps15}. 
By a straightforward computation, we have that $\Psi^{-1} \Delta_{0,1} \Psi$ is equal to $\tilde{\Delta}_{0,1}$ plus two extra terms
\begin{align}
&-\sum_{i,j}\sum_{s\geq1} (-\lambda+u^i) \left( \frac{f^i}{f^j} \right)^{\frac{s+1}2} \left(
(s+2) \gamma_{ji} \theta_i^1 +s \gamma_{ij}\theta_j^1 
\right) u^{j,s} \frac{\partial }{\partial u^{i,s} } \\
&+\sum_{i,j}\sum_{s\geq2} (-\lambda+u^j) \left( \frac{f^i}{f^j} \right)^{\frac{s}2} \left(
(1-s) \gamma_{ij} \theta_j^1-(1+s) \gamma_{ji} \theta_i^1
\right) \theta_j^s \frac{\partial }{\partial \theta_i^s}. 
\end{align}
The following formulas are useful in the computation of the conjugated operator:
\begin{equation}
\Psi^{-1} \frac{\partial }{\partial u^{i,s}} \Psi	 = (f^i)^{\frac{s}2}  \frac{\partial }{\partial u^{i,s}} ,
\quad  
\Psi^{-1} u^{i,s} \Psi = (f^i)^{-\frac{s}2} u^{i,s},
\end{equation}
\begin{equation}
\Psi^{-1} \frac{\partial }{\partial \theta_i^s} \Psi	 = (f^i)^{\frac{s+1}2}  \frac{\partial }{\partial \theta_i^s} ,
\quad  
\Psi^{-1} \theta_i^s \Psi = (f^i)^{-\frac{s+1}2} \theta_i^s,
\end{equation}
\begin{equation}
\Psi^{-1} \frac{\partial }{\partial u^i} \Psi= \frac{\partial }{\partial u^i} + \sum_{j} \frac{\partial \log f^j}{\partial u^i} \sum_{s\geq0} \left(
\frac{s}2 u^{j,s} \frac{\partial }{\partial u^{j,s}} + \frac{s+1}2 \theta_j^s \frac{\partial }{\partial \theta_j^s}  
\right)   .
\end{equation}

By construction the operator $\Delta_{0,1}$ induces a map on the cohomology~\eqref{cohA}, and so does the conjugated operator $\Psi^{-1} \Delta_{0,1} \Psi$.

Let us make a few easy to check observations in order to simplify this operator:
\begin{enumerate}
\item
$\tilde{\Delta}_{0,1}$ maps $\hcC[\lambda]$ to itself, while the two extra terms send it to zero;

\item 
the two extra terms, when $j\not= i$, send $\hat{d}_i(\hcC_i)[\lambda]$ to $\hcM[\lambda]$ which is trivial in cohomology;

\item 
both $\tilde{\Delta}_{0,1}$ and the extra terms for $j=i$ map $\hat{d}_i(\hcC_i)[\lambda]$ to $\hcC^{\textup{nt}}_i[\lambda]$, and, because they need to act on cohomology, they actually send it to $\hat{d}_i(\hcC_i)[\lambda]$;

\item
terms in $\hat{d}_i(\hcC_i)[\lambda]$ which are proportional to $\lambda - u^i$ actually vanish in cohomology, so we can set $\lambda$ equal to $u^i$; this in particular kills the $i=j$ part of the extra terms. 
\end{enumerate}
The lemma is proved.
\end{proof}

Let us identify
\begin{equation}
\label{ide}
\frac{\hat{d}_i (\hcC_i)[\lambda]}{(-\lambda+u^i)\hat{d}_i (\hcC_i)[\lambda]} \simeq \hat{d}_i (\hcC_i)
\end{equation}
by setting $\lambda$ equal to $u^i$.  
Let $\cD_i$ be the operator induced by $\Delta_{0,1}$ on $\hat{d}_i (\hcC_i)$ by this identification. Its explicit form is given in the next corollary.

\begin{corollary}
The operator $\cD_i$ on $\hat{d}_i (\hcC_i)$ is given by $\cD_i= \Psi \tilde{\cD}_i  \Psi^{-1}$ with 
\begin{equation}
\tilde{\cD}_i =\sum_k \theta_k^1 \left[ (u^k - u^i) \left( \frac{\partial }{\partial u^k} + \sum_j \gamma_{jk} \theta_k^0 \frac{\partial }{\partial \theta_j^0}  \right)
+ \sum_j (u^i - u^j) \gamma_{jk} \theta_j^0 \frac{\partial }{\partial \theta_k^0} 
+\cE_k \right].
\end{equation}
\end{corollary}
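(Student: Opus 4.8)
The plan is to obtain $\tilde{\cD}_i$ as a direct specialization of \cref{lemmaD01}. By that lemma $\Delta_{0,1}$ acts on the cohomology~\eqref{cohA} as $\Psi\tilde{\Delta}_{0,1}\Psi^{-1}$, leaving each summand invariant, and $\Psi$ preserves all the subspaces under consideration. It therefore suffices to compute the operator $\tilde{\cD}_i$ induced by $\tilde{\Delta}_{0,1}$ on the $i$-th summand $\hat{d}_i(\hcC_i)[\lambda]/(-\lambda+u^i)\hat{d}_i(\hcC_i)[\lambda]$, transported to $\hat{d}_i(\hcC_i)$ through the identification~\eqref{ide}; the relation $\cD_i=\Psi\tilde{\cD}_i\Psi^{-1}$ then follows formally, since $\Psi$ is $\lambda$-independent and commutes with setting $\lambda=u^i$.

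First I would check that $\tilde{\Delta}_{0,1}$ maps $\hat{d}_i(\hcC_i)[\lambda]$ into itself, which is essentially observation (3) in the proof of \cref{lemmaD01}. Term by term, the operators $\partial/\partial u^k$ and $\partial/\partial\theta^0_m$ supercommute with $\hat{d}_i$, the Euler operator $\cE_k$ preserves $\hat{d}_i(\hcC_i)$ because $\hat{d}_i$ is $w_k$-homogeneous, and multiplication by functions of $u$ and by the generators $\theta^0_j,\theta^1_j$ (all annihilated by $\hat{d}_i$) sends $\hat{d}_i(\hcC_i)=\ker\hat{d}_i\cap\hcC_i^{\textnormal{nt}}$ back into itself. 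Hence $\tilde{\Delta}_{0,1}$ restricts to $\hat{d}_i(\hcC_i)[\lambda]$ with no projection needed.

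Next I would descend this restriction to the quotient. Since $\tilde{\Delta}_{0,1}$ commutes with multiplication by $\lambda$ and $[\tilde{\Delta}_{0,1},-\lambda+u^i]=(-\lambda+u^i)\theta^1_i$ — the only surviving contribution coming from the $k=i$ term $(-\lambda+u^i)\theta^1_i\,\partial/\partial u^i$ of $\tilde{\Delta}_{0,1}$ — the submodule $(-\lambda+u^i)\hat{d}_i(\hcC_i)[\lambda]$ is preserved, so $\tilde{\Delta}_{0,1}$ descends. Choosing $\lambda$-independent representatives, the induced operator $\tilde{\cD}_i$ is then obtained simply by substituting $\lambda=u^i$ into the coefficients of $\tilde{\Delta}_{0,1}$, which is exactly the content of~\eqref{ide}.

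What remains is bookkeeping. After setting $\lambda=u^i$, the first term of $\tilde{\Delta}_{0,1}$ becomes $\sum_k(u^k-u^i)\theta^1_k\,\partial/\partial u^k$ and the Euler term $\sum_k\theta^1_k\cE_k$ is unchanged; these give the $\partial/\partial u^k$ and $\cE_k$ pieces of $\tilde{\cD}_i$ directly. The middle term $\sum_{m,j}(-\lambda+u^j)(\gamma_{mj}\theta^1_j-\gamma_{jm}\theta^1_m)\theta^0_j\,\partial/\partial\theta^0_m$ splits into two pieces which, after renaming the summation indices, reproduce exactly the two $\gamma$-terms of $\tilde{\cD}_i$, with the $k=i$ (respectively $j=i$) summand vanishing automatically. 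I expect this index-and-sign matching of the antisymmetric $\theta^0$-term to be the only genuine point of care; it is routine, and the real content of the statement lies entirely in \cref{lemmaD01}.
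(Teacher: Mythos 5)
Your proposal is correct and takes essentially the same route as the paper: there the corollary is stated without proof precisely because it is the direct specialization of \cref{lemmaD01} under the identification \eqref{ide} (restrict $\tilde{\Delta}_{0,1}$ to the $i$-th summand, set $\lambda=u^i$, rename dummy indices), which is exactly what you carry out. The invariance and descent checks you spell out are the routine verifications the paper leaves implicit.
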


The first page of the second spectral sequence is therefore given by the following direct sum
\begin{equation} \label{2E1}
\tensor*[^2]{E}{_1} \simeq H( \hcC[\lambda],\Delta_{0,1} ) \oplus \left( 
\bigoplus_{i=1}^N H( \hat{d}_i(\hcC_i) , \cD_i )
\right) .
\end{equation}

\subsection{} \label{cohC}

A vanishing result for the cohomology of $\hcC[\lambda]$ is  obtained by a simple degree counting argument. 
\begin{proposition} \label{vanC}
The cohomology $H^p_d(\hcC[\lambda], \Delta_{0,1})$ vanishes for all $(p,d)$, unless
\begin{equation}
d=0, \dots ,N, \quad 
p=d, \dots , d+N.
\end{equation}
\end{proposition}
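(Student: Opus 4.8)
The plan is to reduce the statement to an elementary count of the two gradings on $\hcC[\lambda]$, observing that the differential $\Delta_{0,1}$ plays no role in the vanishing: since $H^p_d(\hcC[\lambda], \Delta_{0,1})$ is a subquotient of the chain group $\hcC[\lambda]^p_d$, it suffices to show that the chain group \emph{itself} vanishes outside the asserted region. So this is really a statement about the support of the bigraded space $\hcC[\lambda]$, not about its cohomology.

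First I would fix a monomial basis of $\hcC$ over $C^\infty(U)$. Because the generators $\theta_1^0, \dots, \theta_N^0, \theta_1^1, \dots, \theta_N^1$ are odd, every such monomial has the form $\theta_I^0 \theta_J^1$ for subsets $I, J \subseteq \{1, \dots, N\}$, with each index occurring at most once. For such a monomial the standard ($\d_x$-)degree counts only the $\theta^1$ generators, so $d = |J|$, while the cohomological ($\theta$-)degree counts all $\theta$ generators, so $p = |I| + |J|$; the scalars in $C^\infty(U)$ carry bidegree $(0,0)$. The formal parameter $\lambda$ enters $D_\lambda$ in combination with $u^i$, as the factor $(-\lambda + u^i)$ in $\Delta_{-1}$ shows, so it too carries bidegree $(0,0)$, exactly like $u^i$; hence adjoining it does not alter the supporting bidegrees and $\hcC[\lambda]^p_d = \hcC^p_d[\lambda]$.

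Combining these observations, a nonzero chain in bidegree $(p,d)$ forces $|J| = d$ and $|I| = p - d$, each of which is the cardinality of a subset of $\{1, \dots, N\}$ and therefore lies in $\{0, \dots, N\}$. This yields precisely $0 \le d \le N$ and $d \le p \le d + N$; outside this region $\hcC[\lambda]^p_d = 0$, so the cohomology vanishes a fortiori. There is no genuine obstacle here: the only points demanding care are the bookkeeping of the two independent gradings and the convention, read off from $\Delta_{-1}$, that $\lambda$ sits in bidegree $(0,0)$.
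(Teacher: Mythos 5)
Your proposal is correct and is essentially the paper's own proof: the paper likewise observes that the possible bidegrees of elements of $\hcC[\lambda]$ are exactly those in the stated region, so the cohomology vanishes elsewhere because the chain groups themselves do. Your write-up just makes explicit the monomial bookkeeping ($d=|J|$, $p=|I|+|J|$, $\lambda$ in bidegree $(0,0)$) that the paper leaves implicit.
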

\begin{proof}
The possible bi-degrees of the elements of $\hcC$ are precisely those excluded in the proposition. 
\end{proof}

\subsection{}
\label{cohdC}

We have the following vanishing result for the cohomology of $(\hat{d}_i (\hcC_i) , \cD_i)$.

\begin{proposition} \label{vandC}
The cohomology $H^p_d(\hat{d}_i(\hcC_i),  \cD_i )$ vanishes for all $(p,d)$, unless
\begin{equation}
d=2, \cdots, N+2, \quad 
p=d, \dots , d+N-1 .
\end{equation}
\end{proposition}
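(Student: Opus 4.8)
The plan is to transfer the computation to the conjugated operator $\tilde{\cD}_i$ of the preceding corollary: since $\Psi$ preserves all the relevant gradings, $H(\hat{d}_i(\hcC_i),\cD_i)\cong H(\hat{d}_i(\hcC_i),\tilde{\cD}_i)$, so I work with $\tilde{\cD}_i$ throughout. The starting observation is that every monomial in the image $\hat{d}_i(\hcC_i)$ contains at least one generator $\theta_i^{\,t}$ with $t\geq 2$. This already forces $d\geq 2$ at the level of the space; moreover, such a generator contributes $1-t\leq -1$ to $p-d$, the bosonic $u^{i,s}$ ($s\geq1$) contribute $-s\leq-1$, while the only positive contributions to $p-d$ come from the at most $N$ factors $\theta_j^0$, so $p\leq d+N-1$ on the nose. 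The two remaining, genuinely cohomological, bounds $p\geq d$ and $d\leq N+2$ I would extract from a spectral sequence.

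Concretely, I would filter $\hat{d}_i(\hcC_i)$ by the combined degree $\Phi := C\cdot\#\{\theta_k^1 : k\neq i\} + w_i$ for a large constant $C$, where $w_i$ is the weight of \cref{lemmaD01}. A short inspection of the four families of terms in $\tilde{\cD}_i$ shows that every summand with $k\neq i$ raises $\#\{\theta_k^1:k\neq i\}$ by one and changes $w_i$ by at most $\tfrac12$ in absolute value, while the single off-diagonal term in the $k=i$ summand, $\theta_i^1\sum_j(u^i-u^j)\gamma_{ji}\theta_j^0\tfrac{\partial}{\partial\theta_i^0}$, raises $w_i$ by $\tfrac12$; hence for $C$ large all of these strictly increase $\Phi$, and the associated graded differential is exactly the diagonal piece $\theta_i^1\cE_i$. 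This operator is elementary: writing each monomial as $m_0$ or $\theta_i^1 m_0$ with $m_0$ free of $\theta_i^1$, it sends $m_0\mapsto w_i(m_0)\,\theta_i^1 m_0$ and $\theta_i^1 m_0\mapsto 0$, so its cohomology is spanned precisely by the $w_i$-weight-zero monomials.

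It then remains to identify the $w_i$-weight-zero monomials inside $\hat{d}_i(\hcC_i)$ and read off their bidegrees. Among the index-$i$ generators only $\theta_i^0$ has negative weight ($-\tfrac12$), while every positive one has weight $\geq\tfrac12$; since the image requires at least one $\theta_i^{\,t}$ with $t\geq 2$, vanishing of the total index-$i$ weight forces the index-$i$ content to be exactly $\theta_i^0\theta_i^2$, optionally times $\theta_i^1$, contributing bidegree $(2,2)$ or $(3,3)$. The complementary factor is an arbitrary monomial in $C^\infty(U)[\{\theta_j^0,\theta_j^1\}_{j\neq i}]$, contributing $(|A|+|B|,|B|)$ with $A,B\subseteq\{1,\dots,N\}\setminus\{i\}$. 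Combining the two cases gives exactly $d\in\{2,\dots,N+2\}$ and $p-d=|A|\in\{0,\dots,N-1\}$, i.e. the stated window. As the first page is already supported in that window and every later page is a subquotient, the spectral sequence converges to a cohomology supported there, which is the asserted vanishing outside it.

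The main obstacle is arranging the filtration so that the Euler term $\theta_i^1\cE_i$ is genuinely the leading differential: the term $\theta_k^1(u^i-u^j)\gamma_{jk}\theta_j^0\tfrac{\partial}{\partial\theta_k^0}$ with $j=i$ actually lowers $w_i$, so filtering by $w_i$ alone fails, and one must first separate off the $k\neq i$ summands by the $\theta^1$-count — which is precisely what the combined degree $\Phi$ achieves. I would also verify the two routine points needed for convergence and for the graded differential to make sense, namely that $\theta_i^1\cE_i$ preserves $\hat{d}_i(\hcC_i)$ (it multiplies by a scalar and by the $\hat{d}_i$-closed generator $\theta_i^1$) and that $\Phi$ takes only finitely many values in each fixed bidegree.
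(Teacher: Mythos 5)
Your proof is correct, and it lands on the same surviving space as the paper (monomials whose index-$i$ content is $\theta_i^0\theta_i^2$, optionally times $\theta_i^1$, tensored with the $\theta^0,\theta^1$ variables of the other indices), but it gets there by a genuinely different filtration. The paper filters by $\deg_{\theta_i^1}-\deg_\theta$ alone, so its zeroth-page differential is the full $k=i$ summand $\cD_{i,1}=\theta_i^1\bigl[\sum_j(u^i-u^j)\gamma_{ji}\theta_j^0\partial/\partial\theta_i^0+\cE_i\bigr]$, off-diagonal term included; it then decomposes $\hat{d}_i(\hcC_i)$ into subcomplexes $\hcC\,\hat{d}_i(\m)$ over monomials $\m$ in the index-$i$ variables, splits $\hcC=\hcC_0^i\oplus\theta_i^0\hcC_0^i$, and invokes the acyclic-subcomplex/projection principle to conclude that only $\m=u^{i,1}$ contributes. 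Your combined degree $\Phi=C\cdot\#\{\theta_k^1:k\neq i\}+w_i$ pushes that off-diagonal $\gamma_{ji}$-term (which raises $w_i$ by $\tfrac12$) into strictly positive filtration degree as well, so the associated graded differential is the bare Euler operator $\theta_i^1\cE_i$ and the first page is read off in one line as the $w_i$-weight-zero part; this replaces the paper's key lemma entirely, and you additionally obtain two of the four bounds ($d\geq 2$ and $p\leq d+N-1$) for free at the level of the underlying space. The one point worth phrasing carefully (your argument already contains the needed facts): $\hat{d}_i(\hcC_i)$ is not spanned by monomials, but it is homogeneous for both $w_i$ and $\Phi$ because $\hat{d}_i$ is homogeneous for these degrees, so "the weight-zero part" is well defined; it is contained in the span of weight-zero monomials containing some $\theta_i^{\geq 2}$, which gives the vanishing window, and is in fact exactly $\hcC_0^i\theta_i^0\theta_i^2$ since $g\,\theta_i^0\theta_i^2=\pm\hat{d}_i\bigl(g\,\theta_i^0u^{i,1}\bigr)$. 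Since both routes produce the same first page, your argument also supports the later use of this identification in \cref{subcomplexdCi}.
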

\begin{proof}
To prove this result let us introduce a third spectral sequence. For fixed $i$, let $\deg_{\theta^1_i}$ be the degree that assigns
degree one to  $\theta_i^1$ and degree zero to the remaining generators. Consider the decreasing filtration associated to the degree $\deg_{\theta_i^1} - \deg_\theta$. Let $\tensor*[^3]{E}{}$ be the associated spectral sequence. Let $\cD_{i,1}$ be the homogeneous component of $\cD_i$ with $\deg_{\theta_i^1} =1$, i.e., $\cD_{i,1} = \Psi \tilde{\cD}_{i,1} \Psi^{-1}$ with
\begin{equation}
\tilde{\cD}_{i,1} = \theta_i^1 \left[ \sum_j (u^i - u^j) \gamma_{ji} \theta_j^0 \frac{\partial }{\partial \theta_i^0} 
+\cE_i \right].
\end{equation}
The zero page $\tensor*[^3]{E}{_0}$ is given by $\hat{d}_i(\hcC_i)$ with differential $\cD_{i,1}$:
\begin{equation}
(\tensor*[^3]{E}{_0}, \tensor*[^3]{d}{_0} ) = ( \hat{d}_i(\hcC_i), \cD_{i,1}).
\end{equation}
To prove the proposition it is sufficient to prove the vanishing of the cohomology of this complex in the same degrees, which we will do in the next lemma. 
\end{proof}

\begin{lemma}
The cohomology $H^p_d(\hat{d}_i(\hcC_i), \cD_{i,1})$ vanishes for all $(p,d)$, unless
\begin{equation}
d=2, \cdots, N+2, \quad 
p=d, \dots , d+N-1 .
\end{equation}
\end{lemma}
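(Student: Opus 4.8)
The plan is to compute the cohomology of $(\hat{d}_i(\hcC_i),\cD_{i,1})$ outright and read off its support. Since $\cD_{i,1}=\Psi\tilde{\cD}_{i,1}\Psi^{-1}$ with $\Psi$ invertible and degree-preserving, it is equivalent to work with $\tilde{\cD}_{i,1}=\theta_i^1\,B$, where $B:=\sum_{j\ne i}(u^i-u^j)\gamma_{ji}\theta_j^0\frac{\partial}{\partial\theta_i^0}+\cE_i$. First I would exploit the prefactor $\theta_i^1$: it makes $\tilde{\cD}_{i,1}$ raise the degree $\deg_{\theta_i^1}$ by one, and since $\theta_i^1$ is odd this degree takes only the values $0$ and $1$ on $\hat{d}_i(\hcC_i)$ (and $\tilde{\cD}_{i,1}^2=0$ because $\theta_i^1\theta_i^1=0$). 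Let $W\subseteq\hat{d}_i(\hcC_i)$ be the $\theta_i^1$-free part; then $\hat{d}_i(\hcC_i)=W\oplus\theta_i^1 W$, and $B$ preserves $W$ (here $N_i$ commutes with $\hat{d}_i$ and $\cE_i$ respects the weight grading that $\hat{d}_i(\hcC_i)$ carries). Hence the complex reduces to the two-term complex $W\xrightarrow{\,w\mapsto\theta_i^1 B(w)\,}\theta_i^1 W$, whose cohomology is $\ker(B|_W)$ in $\deg_{\theta_i^1}=0$ and $\theta_i^1\cdot\operatorname{coker}(B|_W)$ in $\deg_{\theta_i^1}=1$.

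Next I would study $B$ through the weight $w_i$ underlying $\cE_i$. Writing $B=\cE_i+N_i$ with $N_i:=\sum_{j\ne i}(u^i-u^j)\gamma_{ji}\theta_j^0\frac{\partial}{\partial\theta_i^0}$, the operator $\cE_i$ is semisimple (it scales a monomial by its $w_i$-weight) while $N_i$ is nilpotent and raises $w_i$ by $\tfrac12$. Both are $C^\infty(U)$-linear and preserve each bidegree $(p,d)$, in which $W$ is a finite-rank free $C^\infty(U)$-module spanned by finitely many weights. Ordering these weights, $B$ becomes block-triangular with diagonal blocks $w\cdot\Id$; hence $B|_{W^p_d}$ is invertible whenever the weight $0$ does not occur in $W^p_d$. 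Consequently both $\ker(B|_W)$ and $\operatorname{coker}(B|_W)$ vanish in every bidegree in which the weight-zero subspace $W_0\subseteq W$ is trivial, so it remains to locate $W_0$.

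A nonzero monomial of $W$ always carries at least one $\theta_i^t$ with $t\ge2$, since $W=\hat{d}_i(\hcC_i)$ lies in the image of $\hat{d}_i$. Using $w_i(u^{i,s})=\tfrac{s}2+1$, $w_i(\theta_i^0)=-\tfrac12$, $w_i(\theta_i^t)=\tfrac{t-1}2$ and $w_i=0$ on the remaining generators, the condition $w_i=0$ forces such a monomial to contain no $u^{i,s}$, exactly one factor $\theta_i^2$, and the factor $\theta_i^0$, because every $u^{i,s}$ contributes weight $\ge\tfrac32$, every $\theta_i^{\ge2}$ weight $\ge\tfrac12$, and only the single possible $\theta_i^0$ can contribute the compensating $-\tfrac12$. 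Thus $W_0=\theta_i^0\theta_i^2\cdot C^\infty(U)[\{\theta_j^0,\theta_j^1\}_{j\ne i}]$ (these are indeed exact, being $\pm\hat{d}_i(\theta_i^0 u^{i,1}g)$). Writing a basis element $g$ with $r$ factors $\theta_j^0$ and $s$ factors $\theta_j^1$ ($0\le r,s\le N-1$), the element $\theta_i^0\theta_i^2 g$ has bidegree $(p,d)=(2+r+s,\,2+s)$. The $\ker(B|_W)$ part occupies exactly these bidegrees, i.e. $d\in[2,N+1]$ with $p-d=r\in[0,N-1]$; the $\operatorname{coker}(B|_W)$ part carries the extra $\theta_i^1$ and is therefore shifted by $(1,1)$, giving $d\in[3,N+2]$ with again $p-d\in[0,N-1]$. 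Their union is precisely $d=2,\dots,N+2$ and $p=d,\dots,d+N-1$, as claimed.

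I expect the weight bookkeeping fixing $W_0$ to be the real content of the lemma: everything collapses because $w_i=0$, together with the positivity of all contributions except the unique $\theta_i^0$, leaves only the rigid shape $\theta_i^0\theta_i^2 g$. The second point needing care is the block-triangular invertibility of $B$ off weight zero, which must be carried out over the ring $C^\infty(U)$ rather than over a field; here the key observation is that the diagonal blocks are scalars $w\ne0$ and hence invertible in $C^\infty(U)$.
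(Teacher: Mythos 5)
Your proof is correct, and it reaches the lemma by a differently organized argument than the paper's. The paper decomposes $\hat{d}_i(\hcC_i)$ into subcomplexes $\hcC\,\hat{d}_i(\m)$ indexed by monomials $\m$ in the variables $u^{i,s},\theta_i^{s+1}$, $s\geq1$; inside each piece it splits $\hcC=\hcC_0^i\oplus\theta_i^0\hcC_0^i$, discards the half $\hcC_0^i\hat{d}_i(\m)$ as acyclic (its Euler eigenvalue $w_i(\m)-1$ never vanishes), and by the acyclic-subcomplex lemma is left with multiplication by $\theta_i^1\bigl(w_i(\m)-\tfrac32\bigr)$, which is acyclic unless $\m=u^{i,1}$. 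You instead split by $\deg_{\theta_i^1}$, turning the whole complex at once into the two-term complex $W\to\theta_i^1W$, so that the cohomology is $\ker(B|_W)\oplus\theta_i^1\operatorname{coker}(B|_W)$ for the single $C^\infty(U)$-linear operator $B=\cE_i+N_i$, and you kill both kernel and cokernel off the weight-zero locus by triangularity (a nilpotent perturbation of an invertible diagonal). Both arguments pivot on the same weight $w_i$ and end with the same degree count on $\theta_i^0\theta_i^2\,C^\infty(U)[\{\theta_j^0,\theta_j^1\}_{j\neq i}]$, so the mechanism is shared; the difference lies in where the linear algebra happens, and your version buys two things. First, it is genuinely global: the paper's conclusion that the cohomology of $\hat{d}_i(\hcC_i)$ is that of $\hcC\hat{d}_i(u^{i,1})$ tacitly treats $\hat{d}_i(\hcC_i)$ as the direct sum of the pieces $\hcC\hat{d}_i(\m)$, which is not literally true since distinct monomials can give the same piece (for instance $\hat{d}_i(u^{i,1}\theta_i^3)=-\hat{d}_i(u^{i,2}\theta_i^2)$); this is easily repaired by selecting monomials whose images form a basis, but your argument never needs the repair. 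Second, the $(1,1)$ shift between the $\ker$ and $\operatorname{coker}$ contributions makes visible, already at this stage, the two rows $d=2$ and $d=3$ of the cohomology found later in \cref{lem13}. What the paper's route buys in exchange is that its reduction to the explicit subcomplex $\hcC_0^i\theta_i^0\theta_i^2$ with the projected differential is precisely the setup reused in \cref{subcomplexdCi} to compute the full cohomology of $(\hat{d}_i(\hcC_i),\cD_i)$, whereas your kernel/cokernel description would still have to be unwound into that form.
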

\begin{proof}
As before let us work with the operator $\tilde{\cD}_{i,1}$. Let $\m$ be a monomial in the variables $u^{i,s}$, $\theta_i^{s+1}$ for $s\geq1$. For $g\in\hcC$, we have
\begin{equation}
\tilde{\cD}_{i,1} \left( g \hat{d}_i (\m) \right) = \theta_i^1 \left(
\sum_j (u^i - u^j) \gamma_{ji} \theta_j^0 \frac{\partial }{\partial \theta_i^0} g  + (w_i(g) + w_i(\m) -1) g 
\right) \hat{d}_i(\m),
\end{equation}
where $w_i$ is the weight defined in \cref{lemmaD01}.
Therefore $\tilde{\cD}_{i,1}$ leaves $\hcC \hat{d}_i(\m)$ invariant for each monomial $\m$. We will now prove that the cohomology of the subcomplex $\hcC \hat{d}_i(\m)$ vanishes for all monomials $\m$, except for the case $\m=u^{i,1}$, therefore the cohomology of $\hat{d}_i(\hcC_i)$ is just given by the cohomology of $\hcC \hat{d}_i(u^{i,1})$.
Notice that $\hat{d}_i(\m)$ is nonzero only for $w_i(\m) \geq \frac32$, and the case $w_i(\m)=\frac32$ corresponds to $\m=u^{i,1}$ and $\hat{d}_i(\m) = \theta_i^2$.

Let us split $\hcC = \hcC^i_0 \oplus \theta_i^0 \hcC^i_0$, where $\hcC^i_0$ is the subspace spanned by monomials that do not contain $\theta_i^0$. Given $g \in \hcC^i_0$ we have
\begin{equation}
\tilde{\cD}_{i,1} \left( g \hat{d}_i(\m) \right)= \theta_i^1(w_i(\m)-1)g \hat{d}_i(\m).
\end{equation}
Notice that the coefficient $w_i(\m)-1$ is non-vanishing, therefore $\tilde{\cD}_{i,1}$ is acyclic on the subcomplex $\hcC^i_0 \hat{d}_i(\m)$.
For $g \in \theta_i^0 \hcC^i_0$, the differential $\tilde{\cD}_{i,1}$ maps $g \hat{d}_i(\m)$ to $\theta_i^1(w_i(\m)-\frac32)g \hat{d}_i(\m) \in \theta^0_i \hcC^i_0 \hat{d}_i(\m)$ plus an element in $\hcC^i_0 \hat{d}_i(\m)$. 

It is well-known that when a complex $(C,d)$ contains an acyclic subcomplex $C'$, its cohomology is given by the cohomology of a subspace $C''$ complementary to $C'$ with differential given by the restriction and projection of $d$ to $C''$. 

In the present case this implies that the cohomology of $\hcC \hat{d}_i(\m)$ is equivalent to the cohomology of $\theta_i^1 \hcC^i_0$ with differential given by the operator of multiplication by the element $\theta_i^1(w_i(\m)-\frac32)$. Such complex is acyclic as long as $w_i(\m) \not= \frac32$. The only nontrivial case is when $\m = u^{i,1}$, and in such case the cohomology is given by
\begin{equation}
\theta_i^0 \hcC^i_0 \hat{d}_i(u^{i,1}) =\hcC^i_0 \theta_i^0 \theta_i^2 .
\end{equation}
Counting the degrees of the possible elements in this space we obtain the vanishing result above. 
\end{proof}

\subsection{}

From the previous two propositions it follows that $\tensor*[^2]{E}{_1}$ is zero if the bi-degree $(p,d)$ is not in one of the two specified ranges, i.e., in their union given in \cref{vanth}. Clearly the vanishing of $\tensor*[^2]{E}{_1}$ in certain degrees implies the vanishing of $\tensor*[^1]{E}{_2}$ and consequently of $H(\hcA[\lambda], D_\lambda)$ in the same degrees. This concludes the proof of \cref{vanth}.

\section{The cohomology of \texorpdfstring{$(\hat{d}_i(\hcC_i), \cD_i)$}{dhat(Ci,Di)}}
\label{subcomplexdCi}

In this section we extend the vanishing result of \cref{cohdC} to a computation of the full cohomology of the complex $(\hat{d}_i(\hcC_i), \cD_i)$.

First, we can represent the space $\hat{d}_i(\hcC_i)$ as a direct sum
\begin{equation}
\hat{d}_i(\hcC_i) =
\hcC_0^i \theta_i^2 \oplus \hcC^i_0 \theta_i^0 \theta_i^2 \oplus \hcC \otimes \hat{d}_i(V_i)
\end{equation}
where, as before in \cref{cohdC}, we denote by $\hcC^i_0$ the subspace of $\hcC$ spanned by monomials that do not contain $\theta_i^0$. We denote by $V_i$ the space of polynomials in $u^{i,\geq 1}$ , $\theta_i^{\geq2}$ of standard degree $\geq2$. 

\begin{lemma}
The differential $\cD_i$ leaves invariant the spaces $\hcC_0^i \theta_i^2$ and $\hcC \otimes \hat{d}_i(V_i)$, while
\begin{equation}
\cD_i ( \hcC_0^i \theta_i^0 \theta_i^2 ) \subset \hcC \theta_i^2 
= \hcC_0^i \theta_i^2 \oplus \hcC^i_0 \theta_i^0 \theta_i^2 .
\end{equation}
\end{lemma}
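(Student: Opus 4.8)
The plan is to conjugate $\Psi$ out of the way, work with the explicit operator $\tilde{\cD}_i$ from the preceding corollary, and then read off the three claims from two structural features of its formula. First I would note that $\Psi$ merely rescales each generator by a power of $f^i\in C^\infty(U)$, so it sends every monomial to a $C^\infty(U)$-multiple of itself; in particular $\Psi$ and $\Psi^{-1}$ each preserve all three summands $\hcC^i_0\theta_i^2$, $\hcC^i_0\theta_i^0\theta_i^2$ and $\hcC\otimes\hat{d}_i(V_i)$. Since $\cD_i=\Psi\tilde{\cD}_i\Psi^{-1}$, it therefore suffices to prove the stated inclusions with $\cD_i$ replaced by $\tilde{\cD}_i$.

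The first structural fact is that $\tilde{\cD}_i$ does not move elements between different degrees in the jet variables $u^{i,s}$ ($s\geq1$) and $\theta_i^{\geq2}$. Inspecting the formula for $\tilde{\cD}_i$, each term is a left multiplication by some $\theta_k^1$ followed by one of: $\partial/\partial u^k$ acting on the $C^\infty(U)$-coefficients, a shift of the $\theta^0$-generators (the operators $\gamma_{jk}\theta_k^0\,\partial/\partial\theta_j^0$ and $\gamma_{jk}\theta_j^0\,\partial/\partial\theta_k^0$), or the Euler operator $\cE_k$. The $\theta_k^1$ and the first two operators act only on the $\hcC$-factor, and although $\cE_k$ formally differentiates the jet variables, it acts diagonally, as multiplication by the weight $w_k$, hence leaves the jet-content of every monomial intact. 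Consequently $\tilde{\cD}_i$ preserves the standard degree in $u^{i,\geq1},\theta_i^{\geq2}$ and keeps the factor $\hat{d}_i(\m)$ (respectively $\theta_i^2$) untouched. As $\hcC\theta_i^2$ has jet-degree exactly $2$ while $\hat{d}_i(V_i)$ has jet-degree $\geq3$ (because $\hat{d}_i$ raises the standard degree by one), this already shows that $\hcC\theta_i^2=\hcC^i_0\theta_i^2\oplus\hcC^i_0\theta_i^0\theta_i^2$ and $\hcC\otimes\hat{d}_i(V_i)$ are each invariant.

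The second fact refines this on the jet-degree-$2$ block. The only terms of $\tilde{\cD}_i$ that could introduce a new factor $\theta_i^0$ are the $k=i$ instance of $\gamma_{jk}\theta_k^0\,\partial/\partial\theta_j^0$ and the $j=i$ instance of $\gamma_{jk}\theta_j^0\,\partial/\partial\theta_k^0$, and both carry the coefficient $(u^i-u^i)=0$; hence $\tilde{\cD}_i$ can only preserve or delete the $\theta_i^0$-content, never create it. Combining the two facts: $\hcC^i_0\theta_i^2$ has no $\theta_i^0$ and jet-content $\theta_i^2$, so its image stays in $\hcC^i_0\theta_i^2$; an element $g\theta_i^0\theta_i^2$ with $g\in\hcC^i_0$ maps, term by term, either to something still containing $\theta_i^0$ (in $\hcC^i_0\theta_i^0\theta_i^2$) or, via the two $\theta_i^0$-deleting terms, to something in $\hcC^i_0\theta_i^2$, whence $\cD_i(\hcC^i_0\theta_i^0\theta_i^2)\subset\hcC\theta_i^2$; and $\hcC\otimes\hat{d}_i(V_i)$ is invariant because $\hcC$ is the whole algebra, so the $\theta^0$-shifts stay inside it.

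I expect the only delicate point to be the bookkeeping around $\cE_k$: one must be sure that, although it is built from $\partial/\partial u^{i,s}$ and $\partial/\partial\theta_i^s$, it acts as a scalar on each monomial and so cannot transport an element to a summand of different jet-degree. Once this, together with the vanishing of the two $(u^i-u^i)$-coefficients, is verified directly from the displayed formula for $\tilde{\cD}_i$, the three inclusions follow immediately, and no genuine difficulty remains.
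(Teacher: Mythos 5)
Your overall strategy is sound, and most of it is exactly the "simple check" the paper alludes to: the observation that every term of $\tilde{\cD}_i$ either acts on the $\hcC$-factor (multiplication by $\theta_k^1$, the derivatives $\partial/\partial u^k$, the $\theta^0$-shifts) or diagonally (the Euler terms), combined with your remark that the only $\theta_i^0$-creating terms carry the coefficient $(u^i-u^i)=0$, is the right skeleton, and your treatment of the two monomial-spanned summands $\hcC_0^i\theta_i^2$ and $\hcC_0^i\theta_i^0\theta_i^2$ is correct as written. However, there is a genuine gap at the step you yourself flagged as delicate: the invariance of $\hcC\otimes\hat{d}_i(V_i)$ under the diagonal operators, namely under $\Psi$ (needed already for your reduction from $\cD_i$ to $\tilde{\cD}_i$) and under the Euler terms $\theta_k^1\cE_k$. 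Unlike the other two summands, $\hcC\otimes\hat{d}_i(V_i)$ is \emph{not} spanned by monomials: a typical element is $g\,\hat{d}_i(\m)$ where $\hat{d}_i(\m)$ is a sum of several monomials, e.g.\ $\hat{d}_i(u^{i,1}u^{i,2})=\theta_i^2u^{i,2}+u^{i,1}\theta_i^3$, and the individual monomials do not lie in the summand. An operator that rescales each monomial separately preserves monomial-spanned subspaces, but not subspaces of this kind: if the scalars attached to the different monomials of $\hat{d}_i(\m)$ were different, the image of $g\,\hat{d}_i(\m)$ would be a linear combination of the correct jet-degree that is no longer of the form $\hcC\cdot\hat{d}_i(\m)$. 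So "acts as a scalar on each monomial, hence keeps the factor $\hat{d}_i(\m)$ untouched" is a non sequitur as stated.

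The gap is easily closed by one homogeneity check: $\hat{d}_i$ trades a factor $u^{i,s}$ for $\theta_i^{s+1}$, which changes the $w_i$-weight by $s/2-(s/2+1)=-1$ and the $\Psi$-exponent by $(s+2)/2-s/2=+1$, both independently of $s$; hence all monomials of $\hat{d}_i(\m)$ have the same $w_i$-weight $w_i(\m)-1$, the same $\Psi$-exponent, and $w_k$-weight $0$ for $k\neq i$. Therefore $\Psi$ and each $\cE_k$ act on $g\,\hat{d}_i(\m)$ by one overall scalar, and the summand is preserved. It is worth noting that the paper's one-line proof, via the identity $[\tilde{\cD}_i,\hat{d}_i]_+=-\theta_i^1\hat{d}_i$, is designed precisely to make this bookkeeping automatic: writing an element as $\pm\hat{d}_i(g\m)$ and using $\tilde{\cD}_i\hat{d}_i=-\hat{d}_i\tilde{\cD}_i-\theta_i^1\hat{d}_i$, one only ever applies $\tilde{\cD}_i$ to the preimage $g\m$, which \emph{is} a monomial times $\hcC$, so the diagonal action is unproblematic there and the image lands manifestly in $\hcC\,\hat{d}_i(\m)$. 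With the homogeneity remark added, your direct-inspection argument becomes a complete and fully explicit alternative to that identity.
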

\begin{proof}
As before we can equivalently work with $\tilde{\cD}_i$. The statement is a simple check, noticing that $[\tilde{\cD}_i, \hat{d}_i ]_+ = - \theta_i^1 \hat{d}_i$.
\end{proof}

As we know from \cref{cohdC} the cohomology is a subquotient of $\hcC^i_0 \theta_i^0 \theta^2_i$. Therefore the subcomplexes $\hcC_0^i \theta_i^2$ and $\hat{d}_i(V_i)$ are acyclic and the cohomology is given by
\begin{equation}
H(\hat{d}_i(\hcC_i), \cD_i) = H( \hcC_0^i \theta_i^0\theta_i^2, \cD_i'),
\end{equation}
where $\cD_i'$ is the restriction and projection of $\cD_i$ to $ \hcC_0^i \theta_i^0\theta_i^2$. Explicitly $\cD_i' = \Psi \tilde{\cD}_i' \Psi^{-1}$ is given by removing the terms in $\tilde{\cD}_i$ that decrease the degree in $\theta_i^0$, which gives
\begin{equation}
\tilde{\cD}_i' =\sum_{k\not= i} \theta_k^1 \left[ (u^k - u^i) \left( \frac{\partial }{\partial u^k} + \sum_{j\not= i} \gamma_{jk} \theta_k^0 \frac{\partial }{\partial \theta_j^0}  \right)
+ \sum_j (u^i - u^j) \gamma_{jk} \theta_j^0 \frac{\partial }{\partial \theta_k^0} 
+\cE_k \right] .
\end{equation}
Notice that $\cE_i$ maps $\hcC^i_0 \theta_i^0 \theta^2_i$ to zero, since both $\theta_i^1$ and $\theta_i^0 \theta_i^2$ have degree $w_i$ equal to zero. We can now split $\hcC^i_0 \theta_i^0 \theta^2_i$ in the direct sum $\hcC^i_{0,1} \theta_i^0 \theta^2_i \oplus \hcC^i_{0,1} \theta_i^0 \theta_i^1 \theta^2_i$ where $\hcC^i_{0,1}$ is the subspace of $\hcC^i_0$ spanned by monomials that do not depend on $\theta_i^1$. Since $\tilde{\cD}_i'$ does not act on $ \theta_i^1 \theta^2_i$ or $\theta_i^0 \theta_i^1 \theta^2_i$, we can reduce our problem to computing the cohomology of the complex $(\hcC^i_{0,1}, \tilde{\cD}_i')$. Let us denote by $\hat{\delta}_k^i$ the coefficient of $\theta_k^1$ in $\tilde{\cD}_i'$, i.e., 
\begin{equation}
\tilde{\cD}_i' = \sum_{k\not=i} \theta_k^1 \hat{\delta}_k^i. 
\end{equation}

\begin{lemma}
The cohomology $H^p_d(\hcC^i_{0,1}, \tilde{\cD}_i')$ is nontrivial only in degrees $d=0$ and $p=0, \dots , N-1$. In degree $(d=0,p)$ it is isomorphic to $C^\infty (u^i) \otimes \bigwedge^p \R^{N-1}$ and is represented by an element 
\begin{equation}
F = \sum_{\substack{J \subseteq [n]\setminus \{ i \} \\ | J|=p} }
F^J(u^1, \dots , u^N) \theta^0_J
\in \bigcap_{k \not =i} \ker \hat{\delta}_k^i ,
\end{equation}
which depends on a single function of the variable $u^i$.
\end{lemma}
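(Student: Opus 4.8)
The plan is to recognise $(\hcC^i_{0,1},\tilde{\cD}_i')$ as the de Rham complex of a flat connection and to compute it by a twisted Poincaré lemma. Writing $\tilde{\cD}_i'=\sum_{k\neq i}\theta_k^1\hat\delta_k^i$, the key observation is that each $\hat\delta_k^i$ acts only on the $\theta^1$-free part $W:=C^\infty(U)\otimes\bigwedge[\theta_j^0:j\neq i]$, since it involves only $u$, the $\theta_j^0$, and derivatives in these; in particular it is inert on the $\theta_k^1$. As $\tilde{\cD}_i'$ is a differential and the $\theta_k^1$ anticommute, expanding $(\tilde{\cD}_i')^2=0$ forces $[\hat\delta_k^i,\hat\delta_l^i]=0$ for all $k,l\neq i$. (This commutation can also be checked directly from the Ferapontov equations \eqref{eq:dgammaijk}--\eqref{udgamma}, but it is automatic here.) Thus $\hcC^i_{0,1}=W\otimes\bigwedge[\theta_k^1:k\neq i]$ is precisely the Chevalley--Eilenberg/de Rham complex of the commuting family $\{\hat\delta_k^i\}_{k\neq i}$, with the $\theta_k^1$ in the role of the $du^k$ and the standard degree $d$ equal to the number of $\theta^1$'s.

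First I would strip off the leading coefficients. Because $U$ lies off the diagonals, each $u^k-u^i$ is invertible in $C^\infty(U)$, and $\hat\delta_k^i=(u^k-u^i)\nabla_k$ with $\nabla_k=\tfrac{\partial}{\partial u^k}+A_k$, where $A_k\in C^\infty(U)\otimes\operatorname{End}(\bigwedge[\theta^0])$ collects the rotation terms and $\cE_k$ and preserves the $\theta^0$-degree. Rescaling each $\theta_k^1$ by the invertible factor $u^k-u^i$ is a bidegree-preserving isomorphism of complexes from $(\hcC^i_{0,1},\nabla)$ to $(\hcC^i_{0,1},\tilde{\cD}_i')$, where $\nabla:=\sum_{k\neq i}\theta_k^1\nabla_k$; the check uses that $\partial_{u^k}$ annihilates $\prod_{l\in K}(u^l-u^i)$ whenever $k\notin K$. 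Moreover $[\hat\delta_k^i,\hat\delta_l^i]=0$ transfers to $[\nabla_k,\nabla_l]=0$, so $\nabla$ is a flat connection in the $N-1$ directions $u^k$ ($k\neq i$), with $u^i$ a spectator parameter, on the trivial bundle over $U$ with fibre $\bigwedge[\theta_j^0:j\neq i]$.

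The heart of the argument is then a Poincaré lemma for this flat connection. Since $U$ is convex, the slices $\{u^i=\text{const}\}\cap U$ are convex, hence contractible and simply connected; flatness therefore yields a smooth invertible flat frame $g\in C^\infty(U)\otimes\operatorname{GL}(\bigwedge[\theta^0])$, chosen degree by degree in $\theta^0$, that gauges $\nabla$ into the untwisted relative de Rham differential $d_0=\sum_{k\neq i}\theta_k^1\tfrac{\partial}{\partial u^k}$ (equivalently, one builds the contracting homotopy directly by integrating along the straight-line contraction of each $u^i$-slice, twisted by parallel transport). The ordinary relative Poincaré lemma on the convex fibres shows that $(C^\infty(U)\otimes\bigwedge[\theta^0],d_0)$ has cohomology concentrated in $\theta^1$-degree $0$, equal to $C^\infty(u^i)\otimes\bigwedge[\theta^0]$. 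Transporting back, $H^p_d(\hcC^i_{0,1},\tilde{\cD}_i')$ vanishes unless $d=0$, and in $d=0$ it equals the $\nabla$-flat sections of $\theta^0$-degree $p$; these exist only for $0\leq p\leq N-1$. Since $d=0$ is the bottom degree there are no coboundaries, so every class is represented by some $F=\sum_{|J|=p}F^J\theta^0_J\in\bigcap_{k\neq i}\ker\hat\delta_k^i$, as claimed. A flat section is determined by its restriction to a base slice $\{u^k=u^k_0:k\neq i\}$, an arbitrary element of $C^\infty(u^i)\otimes\bigwedge^p\R^{N-1}$; concretely the frame $g$ gives a $C^\infty(u^i)$-basis $\{e_J\}$ with $e_J$ reducing to $\theta^0_J$ on the slice, so a general class is $\sum_J g_J(u^i)e_J$, each generator governed by a single function of $u^i$. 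For $p=0$ this recovers $\ker=C^\infty(u^i)$ at once, since $(u^k-u^i)\partial_{u^k}F=0$ forces $F$ to depend on $u^i$ alone.

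The main obstacle is the twisted Poincaré lemma in the smooth category: one must guarantee that the flat frame (equivalently the parallel-transport homotopy) exists globally and smoothly on $U$, respects the $\theta^0$-grading, and keeps the whole reduction inside $\hcC^i_{0,1}$. The structural input making this possible is the flatness $[\nabla_k,\nabla_l]=0$, inherited from $(\tilde{\cD}_i')^2=0$ (and ultimately a shadow of the Ferapontov equations); once flatness and the invertibility of the $u^k-u^i$ are in hand, the remainder is bookkeeping of the $(p,d)$-bidegrees.
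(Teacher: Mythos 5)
Your proposal is correct and takes essentially the same approach as the paper: the paper likewise splits $\hcC^i_{0,1}$ by $\theta^0$-degree, absorbs the invertible prefactors into the basis of $\theta^1$-monomials, and identifies each piece as a de Rham complex in the variables $u^k$, $k\neq i$, twisted by a flat linear connection, whose cohomology is concentrated in $\theta^1$-degree zero and given by flat sections, determined by their restriction to a slice and hence by functions of $u^i$. Your derivation of flatness from $(\tilde{\cD}_i')^2=0$ and the smooth parametric Poincar\'e lemma are the same ingredients, just spelled out in more detail than in the paper.
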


\begin{proof}

We represent the space of coefficients $\hcC^i_{0,1}$ as a direct sum $\bigoplus_{\ell,t=0}^{n-1} K^{\ell,t}$, where an element of $K^{\ell,t}$ can be written down as 
$$
\sum_{\substack{I\subset [n]\setminus \{i\} \\ |I|=\ell} }
 f^I \theta^1_I \cdot \sum_{\substack{J\subset [n]\setminus \{i\} \\ |J|=t }} \theta^0_JF_{I,J}(u^1,\dots,u^n).
$$
The action of $\mathcal{D}_i'$ can be described, in both cases, as a map $K^{\ell,t}\to K^{\ell+1,t}$ given by the following formula on the components of the corresponding vectors: $F_{I,J}\mapsto G_{S,T}$, where
$$
G_{S,T} = \sum_{s\in S} \frac{\d}{\d u^{s}}F_{I\setminus\{s\},T} +  (A_{s;t})^J_{T} F_{I\setminus\{s\},J},
$$
where the coefficients of the matrices $(A_{s;t})^J_T$ can easily be reconstructed from the formula for the operator $\tilde{\mathcal{D}}_i'$. So, this way we can describe each of the subcomplexes $K^{\bullet,t}\theta^0_i\theta^2_i$, $K^{\bullet,t}\theta^0_i\theta^1_i\theta^2_i$, $t=0,\dots,n-1$, as a tensor product of the de Rham complex of smooth functions in $n-1$ variable $u^k$, $k\not= i$, with a vector space whose basis is indexed by monomials of degree $t$ in $\theta^0_q$, $q\not=i$. The differential (the restriction of $\tilde{\mathcal{D}}_i'$ to this subcomplex) is equal to the de Rham differential $\sum_{p\not=i} \theta^1_p \frac{\d}{\d u^p}$ twisted by a linear map:
\begin{equation} \label{eq:deformed-de-Rham}
\sum_{p\not=i} \theta^1_p \cdot \left( \frac{\d}{\d u^p} + A_{p;t}\right).
\end{equation}
(the coefficients of $A_{p;t}$ depend on whether we consider the case of  $K^{\bullet,t}\theta^0_i\theta^2_i$ or $K^{\bullet,t}\theta^0_i\theta^1_i\theta^2_i$, but the shape of the differential is the same in both cases).

The cohomology of the differential~\eqref{eq:deformed-de-Rham} is isomorphic to the cohomology of the de Rham differential $\sum_{p\not=i} \theta^1_p  \frac{\d}{\d u^p}$. It is represented by the differential forms of order $0$, that is, it is non-trivial only for $\ell=0$, whose vector of coefficients $F_{\emptyset,J}$ solves the differential equations
\begin{equation}\label{eq:equation-for-coeff-F}
\frac{\partial F_{\emptyset,J}}{\partial u^p} + (A_{p;t})_J^T F_{\emptyset,T} = 0
\end{equation}
for $p\not=i$. The solution of this equation is uniquely determined by the restriction $F_{\emptyset,J}|_{u_p=0,\ p\not=i}$, that is, by a single function of $u^i$. So, finally, we obtain the statement of the lemma.
\end{proof}

Taking into account the action of $\Psi$ we obtain the cohomology the complex $(\hat{d}_i(\hcC_i), \mathcal{D}_i)$.

\begin{proposition} 
\label{lem13}
The cohomology of $(\hat{d}_i(\hcC_i), \mathcal{D}_i)$ is nontrivial only in degrees equal to $(p,d)=(2,2),\dotsc, (N+1,2)$ and $(p,d)=(3,3),\dots, (N+2,3)$. In the degrees $(2+t,2)$ and $(3+t,3)$ it is isomorphic to $C^\infty(u^i) \otimes \bigwedge^t\mathbb{R}^{N-1}$, $t=0, \dots , N-1$. 
More precisely, representatives of cohomology classes in degrees $(2+t,t)$ and $(3+t,3)$  are given respectively by elements of the form
\begin{equation}
F \cdot (f^i)^{t/2 +2}  \theta_i^0 \theta_i^2, \qquad G \cdot (f^i)^{t/2 +3}  \theta_i^0 \theta_i^1 \theta_i^2
\end{equation}
for $F$, $G$ representatives of $H^t_0(\hcC^i_{0,1} ,\tilde{\cD}_i')$ as given in the previous lemma. 
\end{proposition}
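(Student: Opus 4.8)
The plan is to assemble the reductions already carried out above with the preceding lemma, and then to transport the answer back through the conjugation by $\Psi$. Recall that the computation has been reduced to $H(\hcC^i_0\theta_i^0\theta_i^2, \cD_i')$, and that $\hcC^i_0\theta_i^0\theta_i^2$ decomposes as $\hcC^i_{0,1}\theta_i^0\theta_i^2 \oplus \hcC^i_{0,1}\theta_i^0\theta_i^1\theta_i^2$ according to whether the generator $\theta_i^1$ occurs. The first step is to observe that $\tilde{\cD}_i' = \sum_{k\neq i}\theta_k^1\hat{\delta}_k^i$ involves none of the generators $\theta_i^0,\theta_i^1,\theta_i^2$ and no differentiation with respect to them (the apparent $j=i$ term in $\hat{\delta}_k^i$ is killed by its $(u^i-u^j)$ prefactor), so it preserves $\hcC^i_{0,1}$ and treats the external factors $\theta_i^0\theta_i^2$ and $\theta_i^0\theta_i^1\theta_i^2$ as inert. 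Hence the differential acts on each summand only through the $\hcC^i_{0,1}$-factor, up to an overall sign on the odd summand which is immaterial for cohomology, and the cohomology of each summand equals $H(\hcC^i_{0,1},\tilde{\cD}_i')$ tensored with the corresponding $\theta$-monomial.

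Next I would feed in the preceding lemma, which tells us that $H^p_d(\hcC^i_{0,1},\tilde{\cD}_i')$ is concentrated in standard degree $d=0$, for cohomological degrees $p=0,\dots,N-1$, where it is isomorphic to $C^\infty(u^i)\otimes\bigwedge^p\R^{N-1}$ with representatives $F=\sum_{|J|=p}F^J\theta_J^0$. A class of bidegree $(t,0)$ tensored with $\theta_i^0\theta_i^2$ acquires bidegree $(t+2,2)$, and tensored with $\theta_i^0\theta_i^1\theta_i^2$ acquires bidegree $(t+3,3)$, since $\theta_i^0\theta_i^2$ has $\deg_\theta=2$ and standard degree $2$, while $\theta_i^0\theta_i^1\theta_i^2$ has $\deg_\theta=3$ and standard degree $3$. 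This produces exactly the two families of nonvanishing groups, in degrees $(2+t,2)$ and $(3+t,3)$ for $t=0,\dots,N-1$, each isomorphic to $C^\infty(u^i)\otimes\bigwedge^t\R^{N-1}$, as claimed.

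It remains to exhibit representatives in the genuine complex $(\hat{d}_i(\hcC_i),\cD_i)$ rather than in the conjugated picture where $\cD_i = \Psi\tilde{\cD}_i'\Psi^{-1}$. For this I would apply $\Psi$ to $F\theta_i^0\theta_i^2$ and to $G\theta_i^0\theta_i^1\theta_i^2$. Since $\Psi$ sends $\theta_i^s\mapsto (f^i)^{(s+1)/2}\theta_i^s$, it multiplies $\theta_i^0\theta_i^2$ by $(f^i)^{1/2+3/2}=(f^i)^2$ and $\theta_i^0\theta_i^1\theta_i^2$ by $(f^i)^{1/2+1+3/2}=(f^i)^3$, while also rescaling each of the $t$ generators $\theta_q^0$ inside $F$ (resp.\ $G$) by a half-power of $f^q$. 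Collecting the external powers ($+2$, resp.\ $+3$) with the $t$ internal half-powers yields the stated normalized representatives $F\cdot(f^i)^{t/2+2}\theta_i^0\theta_i^2$ and $G\cdot(f^i)^{t/2+3}\theta_i^0\theta_i^1\theta_i^2$.

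Almost all of the real content sits in the preceding lemma, so the only genuinely delicate point here is this last piece of bookkeeping: one must keep careful track of the half-integer powers of the $f^j$ produced by $\Psi$ acting both on the external factors $\theta_i^0,\theta_i^1,\theta_i^2$ and on the internal generators $\theta_q^0$ of $F$ and $G$, in order to present each cohomology class in the normalized form above. Everything else is a direct transcription of the splitting of $\hcC^i_0\theta_i^0\theta_i^2$ together with the previous lemma.
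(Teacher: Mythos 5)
Your proof follows essentially the same route as the paper's: the paper's own argument for this proposition is exactly the assembly you perform, namely the reduction to $H(\hcC^i_0\theta_i^0\theta_i^2,\cD_i')$, the splitting $\hcC^i_0\theta_i^0\theta_i^2=\hcC^i_{0,1}\theta_i^0\theta_i^2\oplus\hcC^i_{0,1}\theta_i^0\theta_i^1\theta_i^2$, the observation that $\tilde{\cD}_i'=\sum_{k\neq i}\theta_k^1\hat{\delta}_k^i$ acts only through the $\hcC^i_{0,1}$-factor, the previous lemma, and finally the conjugation by $\Psi$ (the paper compresses this last step into the single sentence ``taking into account the action of $\Psi$''). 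Your verification of the vanishing statement, of the bidegree bookkeeping, and of the isomorphism type $C^\infty(u^i)\otimes\bigwedge^t\R^{N-1}$ is correct.

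The last step, however, contains a non sequitur, precisely at the point you yourself single out as the delicate one. As you correctly observe, $\Psi$ rescales each internal generator $\theta_q^0$, $q\in J$, by $(f^q)^{1/2}$ --- a half-power of $f^q$, not of $f^i$. These factors cannot be ``collected'' with the external $(f^i)^2$ (resp.\ $(f^i)^3$) into $(f^i)^{t/2+2}$ (resp.\ $(f^i)^{t/2+3}$). What the computation actually yields as a representative is
\begin{equation}
\Psi\big(F\,\theta_i^0\theta_i^2\big)
=\sum_{\substack{J\subseteq \{1,\dots,N\}\setminus\{i\}\\|J|=t}}
F^J\Big(\prod_{q\in J}(f^q)^{1/2}\Big)\theta_J^0\,(f^i)^{2}\,\theta_i^0\theta_i^2\,,
\end{equation}
and similarly with $(f^i)^3$ for the $G$-family. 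Rewriting this as $F'\cdot(f^i)^{t/2+2}\theta_i^0\theta_i^2$ with $F'$ again a representative as in the previous lemma would require the map $F\mapsto (f^i)^{-t/2}\sum_J F^J\prod_{q\in J}(f^q)^{1/2}\theta_J^0$ to preserve $\bigcap_{k\neq i}\ker\hat{\delta}_k^i$; since the $\hat{\delta}_k^i$ contain $\partial/\partial u^k$ and this kernel is the solution space of the specific linear system \eqref{eq:equation-for-coeff-F}, there is no reason for this to hold, and indeed for $t\geq1$ there is no reason for the element $F\cdot(f^i)^{t/2+2}\theta_i^0\theta_i^2$ to be $\cD_i$-closed at all. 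The two normalizations agree exactly when $t=0$, which is the only case invoked later (in the proof of \cref{resultp=d}), so nothing downstream is affected; but a correct write-up should either present the representatives in the form displayed above, or explicitly flag that the normalized form claimed in the proposition (whose imprecision you have inherited rather than detected) is only literally accurate for $t=0$.
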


%

\section{The cohomology of \texorpdfstring{$(\hcC[\lambda], \Delta_{0,1})$}{C[l],D01} at \texorpdfstring{$p=d$}{p=d}}
\label{subcomplexpd}

In this section we extend the result of \cref{cohC} by computing the cohomology of the subcomplex of $(\hcC[\lambda], \Delta_{0,1})$ defined by setting $p=d$. 

From \cref{vanC} we already know that the complex \( (\hcC [\l ], \Delta_{0,1}) \) is non-trivial only for \( d \in \{ 0, \dotsc, n\}\) and \( p \in \{ d, \dotsc, d+n\} \). As usual, as the differential is of bidegree \( (p,d) = (1,1)\), it splits in subcomplexes of constant \( p-d \). Here we consider the case $p=d$.

\begin{proposition}
\label{pro14}
For $p=d$ the cohomology of the complex $(\hcC[\l], \Delta_{0,1})$ is given by
\begin{equation} 
H_p^p (\hcC [\l ], \Delta_{0,1} ) \simeq \begin{cases}
\R [\l ] & p = 0,\\
\bigoplus_{i=1}^N C^\infty (u^i) \th_i^1 & p= 1,\\
0 & \text{else.}
\end{cases}
\end{equation}
\end{proposition}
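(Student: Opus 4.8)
The plan is to make the $p=d$ complex completely explicit and then to compute it by peeling off one variable at a time. Working, as allowed by \cref{lemmaD01}, with the conjugated operator $\tilde\Delta_{0,1}$ (conjugation by $\Psi$ preserves the bigrading and hence only rescales representatives by powers of $f^i$, leaving the isomorphism type of the cohomology unchanged), I first note that at $p=d$ a monomial $F(u)\theta^0_I\theta^1_J$ must have $I=\emptyset$, so the $p=d$ part of $\hcC[\l]$ is the exterior algebra $\bigwedge^\bullet\langle\theta_1^1,\dots,\theta_N^1\rangle$ over $C^\infty(U)[\l]$. On this subspace both the middle term of $\tilde\Delta_{0,1}$ (which carries a $\partial/\partial\theta_i^0$) and the Euler term $\sum_i\theta_i^1\cE_i$ (as $w_i$ vanishes on every $u^j$ and every $\theta_j^1$) act by zero, so the differential reduces to the twisted de Rham operator
\begin{equation}
\tilde\Delta_{0,1}\big|_{p=d}=\sum_{i=1}^N (u^i-\l)\,\theta_i^1\,\frac{\partial}{\partial u^i}.
\end{equation}
A direct check gives $\tilde\Delta_{0,1}|_{p=d}=\sum_i d^{(i)}$ with $d^{(i)}:=(u^i-\l)\theta_i^1\,\partial/\partial u^i$ pairwise anticommuting and squaring to zero, so this is a Koszul-type (twisted de Rham) complex. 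In degree $0$ the kernel consists of $F$ with $(u^i-\l)\,\partial F/\partial u^i=0$ for all $i$; since each $u^i-\l$ is a non-zero-divisor in $C^\infty(U)[\l]$ this forces $F\in\R[\l]$, giving $H^0_0\cong\R[\l]$.

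For the remaining degrees I would induct on $N$, peeling off $u^N$. Filter by the $\theta_N^1$-degree (which takes only the values $0$ and $1$), so that $\tilde\Delta_{0,1}|_{p=d}=d^{(N)}+d'$ with $d'=\sum_{i<N}d^{(i)}$; this is a two-column double complex with horizontal differential $d^{(N)}$. Taking horizontal cohomology first, $d^{(N)}$ is the one-variable operator $(u^N-\l)\,\partial/\partial u^N$, whose kernel at $\theta_N^1$-degree $0$ is the forms with coefficients independent of $u^N$, and whose cokernel at $\theta_N^1$-degree $1$ is $\big(C^\infty(U)[\l]/(u^N-\l)\big)\otimes(\text{forms in }\theta^1_1,\dots,\theta^1_{N-1})\,\theta_N^1$. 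Here I use that $\partial/\partial u^N$ is surjective on $C^\infty(U)[\l]$, which holds because $U$ is convex (integrate along the $u^N$-direction), and that quotienting by $(u^N-\l)$ is the substitution $\l\mapsto u^N$. The base case $N=1$ is this same computation by hand: $H^0=\R[\l]$ and $H^1=\big(C^\infty(U)[\l]/(u^1-\l)\big)\theta_1^1\cong C^\infty(u^1)\theta_1^1$.

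The $E_1$ page then has two rows. The $\theta_N^1$-degree $0$ row is exactly the twisted de Rham complex in $u^1,\dots,u^{N-1}$ (with parameter $\l$) over the convex, off-diagonal domain obtained by projecting out $u^N$, so the induction hypothesis applies and contributes $\R[\l]$ in total degree $0$ and $\bigoplus_{i<N}C^\infty(u^i)\theta_i^1$ in total degree $1$. The $\theta_N^1$-degree $1$ row, after the substitution $\l\mapsto u^N$, carries the differential $\sum_{i<N}(u^i-u^N)\theta_i^1\,\partial/\partial u^i$; this is the key new feature. Since $U$ lies outside the diagonals, each coefficient $u^i-u^N$ is nowhere vanishing, so $(u^i-u^N)\,\partial/\partial u^i$ is surjective on $C^\infty(U)$ (again by integrating in the $u^i$-direction, using convexity). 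Hence this row is a product of one-variable complexes each acyclic in positive degree, its cohomology is concentrated in internal degree $0$ equal to the functions independent of $u^1,\dots,u^{N-1}$, namely $C^\infty(u^N)$; carrying the $\theta_N^1$ factor it sits in total degree $1$ and contributes $C^\infty(u^N)\theta_N^1$.

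Finally I would observe that the double complex has only two columns, so the spectral sequence degenerates at $E_2$ for degree reasons ($d_2$ would shift the column index by $2$). Assembling the rows yields $\R[\l]$ in degree $0$, $\bigoplus_{i<N}C^\infty(u^i)\theta_i^1\oplus C^\infty(u^N)\theta_N^1=\bigoplus_{i=1}^N C^\infty(u^i)\theta_i^1$ in degree $1$, and $0$ in degrees $\geq 2$, which is the claim. The main obstacle — and the only place where the geometry of $U$ genuinely enters — is the analysis of the $\theta_N^1$-degree $1$ row: recognizing that after setting $\l=u^N$ the off-diagonal hypothesis makes the coefficients $u^i-u^N$ invertible and thereby collapses that row to a single copy of $C^\infty(u^N)$. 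The bookkeeping of the degree shift induced by the extra $\theta_N^1$, which is what places this residue contribution in cohomological degree $1$ rather than $0$, must also be handled with care.
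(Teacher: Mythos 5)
Your proposal is correct and follows essentially the same route as the paper's proof: both reduce the $p=d$ complex to $C^\infty(U)[\lambda][\theta^1_1,\dots,\theta^1_N]$ with differential $\sum_i(u^i-\lambda)\theta_i^1\,\partial/\partial u^i$ and then peel off one variable at a time (the paper via $N$ nested $\theta_i^1$-filtrations, you via induction on $N$), using convexity of $U$ for exactness of each one-variable piece and the off-diagonal hypothesis to kill all terms carrying two or more $\theta^1$'s — your substitution $\lambda\mapsto u^N$ making $u^i-u^N$ invertible is exactly the paper's observation that the ideal sum $\sum_{j\in J}(u^j-\lambda)$ contains the invertible element $u^i-u^j$ for $|J|\geq 2$.
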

\begin{proof}
For $p=d$ the complex $\hcC[\lambda]$ is equal to 
\begin{equation} 
C^\infty (U) [ \theta^1_1 , \dots , \theta_N^1 ] .
\end{equation}
Let us compute the cohomology of $\tilde{\Delta}_{0,1}$. Because there is no dependence on $\theta^0_k$ and the degree $w_k$ of $\theta_k^1$ is zero, the differential simplifies to 
\begin{equation}
\tilde{\Delta}_{0,1} = \sum_i \delta_i ,
\quad 
\delta_i = (-\lambda+ u^i) \theta_i^1 \frac{\partial }{\partial u^i}. 
\end{equation}
We will let \( J \subseteq \{ 1, \dotsc, N\} \) denote a multi-index and write $\theta_J^1$ for the lexicographically ordered product $\prod_{j\in J} \theta_j^1$.
For each of the \( \th_1^1, \th_2^1, \dotsc, \th_N^1 \), we can define a degree \( \deg_{\th_i^1} - \deg_\th \), which again induces a decreasing filtration. The filtration associated to \( \th_i^1\) has \( \delta_i \) as differential on the zeroth page of the spectral sequence. Considering all these filtrations, we get the following picture:
\begin{equation}
\xymatrix@C=0pt{
& C^\infty (U) [\l ] \th_k^1 \ar[dd]^(0.3){\delta_i} \ar[rr]^{\delta_j} && C^\infty(U)[\l ] \th_j^1 \th_k^1\ar[dd]^{\delta_i} \\
C^\infty (U)[\l ] \ar[dd]_{\delta_i} \ar[rr]_(0.3){\delta_j} \ar[ru]^{\delta_k} && C^\infty (U)[\l ] \th_j^1 \ar[dd]^(0.3){\delta_i} \ar[ru]_{\delta_k} & \\
& C^\infty (U)[\l ] \th_i^1 \th_k^1 \ar[rr]_(0.35){\delta_j} && C^\infty (U)[\l ] \th_i^1 \th_j^1 \th_k^1 \\
C^\infty (U)[\l ] \th_i^1 \ar[rr]_{\delta_j} \ar[ru]_{\delta_k} && C^\infty (U)[\l ] \th_i^1 \th_j^1 \ar[ru]_{\delta_k}& 
}
\end{equation}
So the complex can be visualised as an \( N\)-dimensional hypercube with a term in every corner.

On the first page of the \( \th_1^1 \)-spectral sequence, the differential is  \( \sum_{j \neq 1} \delta_j \), and we can use the \( \th_2^1\)-filtration to get another spectral sequence. This procedure can be repeated inductively.

Consider an element in \( C^\infty (U)[\l ] \th_J^1\). Clearly it is in $\ker \delta_1$ if $J$ contains $1$ or if it does not depend on $u^1$:
\begin{equation}
\ker \delta_1 = \bigoplus_{J \ni 1} C^\infty (U)[\lambda]\theta_J^1 \oplus \bigoplus_{J \not\ni 1} C^\infty (u^2, \dots , u^N)[\lambda] \theta_J^1 ,
\end{equation}
where $C^\infty (u^2, \dots , u^N)$ denotes the functions in $C^\infty (U)$ which are constant in $u^1$.
On the other hand, we clearly have
\begin{equation}
\im \delta_1 = \bigoplus_{J \ni 1}(u^1-\l ) C^\infty (U) [\l ]  \th_J^1 ,
\end{equation}
therefore the first page of the spectral sequence is 
\begin{equation}
H(\hcC[\lambda], \delta_1 ) =  \bigoplus_{J \ni 1} \frac{C^\infty (U)[\lambda]}{(u^i-\lambda)C^\infty (U)[\lambda]} \theta_J^1\oplus \bigoplus_{J \not\ni 1} C^\infty (u^2, \dots , u^N)[\lambda] \theta_J^1.
\end{equation}
As these arguments do not depend on the \( \th_i^1 \) for \( i \neq 1 \) in any way, on the first page of the spectral sequence we can use the \( \th_2^1 \) filtration and use the same arguments to find the first page of its spectral sequence. Completing the induction, we get the following result for the \( \tilde{\Delta}_{0,1}\)-cohomology on \( \hcC [\l ]\):
\begin{equation}
\bigoplus_{ J \subseteq \{ 1, \dotsc, N\}} \frac{C^\infty ( \{ u^j\}_{j \in J} )[\l ]}{\sum_{j \in J} (u^j - \l )} \th_J^1
\end{equation}
where the sum in the denominator is an ideal sum. If \( |J| \geq 2\), this ideal sum contains the invertible element \( u^i - u^j = (u^i - \l ) - (u^j -\l )\) for \( i, j \in J\), so the cohomology is zero. The cohomology of $\tilde{\Delta}_{0,1}$ is therefore nontrivial only in degree zero, where it equals $\R[\lambda]$, and in degree one, where it is given by the sum $\bigoplus_{i=1}^N C^\infty (u^i) \th_i^1$. To find the cohomology of $\Delta_{0,1}$ we need to take into account the action of the operator $\Psi$. Hence the cohomology of $\Delta_{0,1}$ in degree one is $\bigoplus_{i=1}^N C^\infty (u^i) f^i(u) \th_i^1$.
The proposition is proved.
\end{proof}

\section{A vanishing result for \texorpdfstring{$\tensor*[^1]{E}{_2}$}{1E2} at \texorpdfstring{$(p,d)=(3,2)$}{(p,d)=(3,2)}}
\label{subcomplexpdpu}

We now go back to the first spectral sequence $\tensor*[^1]{E}{}$ associated with $\deg_u$ in \cref{degu} and prove a vanishing result for its second page.

\begin{proposition}
\label{prop32}
The cohomology of the complex $(H(\hcA[\lambda], \Delta_{-1}) , \Delta_0 )$ vanishes in degree $(p,d)=(3,2)$.
\end{proposition}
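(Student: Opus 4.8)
The plan is to work entirely inside the second spectral sequence $\tensor*[^2]{E}{}$, whose first page is given by \eqref{2E1}, namely $\tensor*[^2]{E}{_1}=H(\hcC[\lambda],\Delta_{0,1})\oplus\bigoplus_{i}H(\hat{d}_i(\hcC_i),\cD_i)$, and to show that $\tensor*[^2]{E}{_\infty}$ vanishes at $(p,d)=(3,2)$; by convergence of the second spectral sequence this is exactly the claim that $\tensor*[^1]{E}{_2}=H(H(\hcA[\lambda],\Delta_{-1}),\Delta_0)$ vanishes at $(3,2)$, i.e.\ \cref{prop32}. At bidegree $(3,2)$ the first summand is concentrated in $\deg_{\theta^1}=2$: since $\hcC$ contains no $u^{i,s}$ with $s\ge1$, standard degree $2$ forces exactly two factors $\theta^1$ and, together with $p=3$, exactly one factor $\theta^0$. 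By \cref{lem13} the second summand lives in $\deg_{\theta^1}=0$ and equals $\bigoplus_i C^\infty(u^i)\otimes\bigwedge^1\R^{N-1}$, with representatives proportional to $\theta_j^0\theta_i^0\theta_i^2$ for $j\ne i$.

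First I would do the degree bookkeeping around $(3,2)$. Each $\Delta_{0,k}$ raises both $p$ and $d$ by one and changes $\deg_{\theta^1}$ by $k$, so $\tensor*[^2]{d}{_1}$ is induced by $\Delta_{0,0}$ (preserving $\deg_{\theta^1}$) and $\tensor*[^2]{d}{_2}$ by $\Delta_{0,-1}$ (lowering $\deg_{\theta^1}$ by one), and there are no further differentials. Enumerating the generators shows that at $(2,1)$ only the $\hcC$-summand contributes, at $\deg_{\theta^1}=1$; that $\hat{d}_i(\hcC_i)$ is empty in standard degree $\le1$; and that at $(4,3)$ the $\hcC$-summand sits in $\deg_{\theta^1}=3$ while the $\hat{d}_i$-summand sits in $\deg_{\theta^1}=1$. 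Hence $\tensor*[^2]{d}{_1}$ vanishes throughout the range $(2,1),(3,2),(4,3)$, so $\tensor*[^2]{E}{_2}=\tensor*[^2]{E}{_1}$ there, and the two summands at $(3,2)$ decouple completely: the $\deg_{\theta^1}=2$ class can only map under $\tensor*[^2]{d}{_2}$ into the $\hat{d}_i$-summand at $(4,3)$, and the $\deg_{\theta^1}=0$ class can only receive $\tensor*[^2]{d}{_2}$ from the $\hcC$-summand at $(2,1)$. Therefore \cref{prop32} is equivalent to the two rank statements: (i) $\tensor*[^2]{d}{_2}\colon H^3_2(\hcC[\lambda],\Delta_{0,1})\to\bigoplus_i H^4_3(\hat{d}_i(\hcC_i),\cD_i)$ is injective, and (ii) $\tensor*[^2]{d}{_2}\colon H^2_1(\hcC[\lambda],\Delta_{0,1})\to\bigoplus_i H^3_2(\hat{d}_i(\hcC_i),\cD_i)$ is surjective.

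Next I would compute the two groups $H^2_1(\hcC[\lambda],\Delta_{0,1})$ and $H^3_2(\hcC[\lambda],\Delta_{0,1})$, which is the off-diagonal case $p=d+1$ of \cref{pro14}. The difference from the diagonal case is that a monomial now carries a single $\theta^0$, so the middle term of $\tilde\Delta_{0,1}$ in \cref{lemmaD01}, which mixes the $\theta_i^0$ through the rotation coefficients $\gamma_{ij}$, is now active. To tame it I would introduce the linear change of generators $\theta_i^0\mapsto\bar\theta_i^0$ flagged in the remark after \cref{thm:mainmain}, chosen so that this $\theta^0$-term is straightened into de Rham shape; the iterated $\theta_i^1$-filtration argument of \cref{pro14} then applies and expresses both cohomology groups, after undoing the rescaling $\Psi$, as explicit direct sums of spaces of smooth functions of a single canonical coordinate $u^i$.

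The hard part will be the last step: writing the connecting differential $\tensor*[^2]{d}{_2}$ explicitly and checking the rank conditions. This $\tensor*[^2]{d}{_2}$ is the usual second-page connecting map: one represents a class by a $\Delta_{0,1}$-cocycle in $\hcC$, applies $\Delta_{0,0}$ to obtain a $\Delta_{0,1}$-coboundary (it is here that $D_\lambda^2=0$, and so ultimately the Ferapontov relations \eqref{eq:dgammaijk}--\eqref{udgamma}, are needed), trivialises it, and reads off the $\hat{d}_i(\hcC_i)$-component of $\Delta_{0,-1}$ of the chosen primitive as a class in the cohomology of $\cD_i$ described by \cref{lem13}. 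Carrying this out in the $\Psi$-conjugated, $\bar\theta_i^0$-adapted picture reduces both (i) and (ii) to a comparison of one-variable data, and the requisite non-degeneracy should come from the non-vanishing of the weight factors $w_i(\m)-\tfrac32$ and of the coefficients $(u^i-u^j)\gamma_{ji}$ that control $\tilde\cD_i$ and $\tilde\cD_{i,1}$; verifying that these combine into an injection in case (i) and a surjection in case (ii) is the genuine obstacle.
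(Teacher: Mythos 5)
Your spectral-sequence bookkeeping is correct, and it gives a legitimate reformulation of \cref{prop32}: since $\tensor*[^2]{d}{_1}$ vanishes near $(3,2)$ for $\deg_{\theta^1}$-parity reasons and no differential beyond $\tensor*[^2]{d}{_2}$ can touch this spot, the statement is indeed equivalent to your two rank conditions, injectivity of $\tensor*[^2]{d}{_2}\colon H^3_2(\hcC[\lambda],\Delta_{0,1})\to\bigoplus_i H^4_3(\hat d_i(\hcC_i),\cD_i)$ and surjectivity of $\tensor*[^2]{d}{_2}\colon H^2_1(\hcC[\lambda],\Delta_{0,1})\to\bigoplus_i H^3_2(\hat d_i(\hcC_i),\cD_i)$. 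This is also genuinely different from the paper, whose proof abandons the $\deg_{\theta^1}$-filtration at this point: it restricts to the $\deg_u=0$ subcomplex of $\tensor*[^1]{E}{_1}$ (justified via \cref{lem13}) and introduces a new filtration by $\deg_{\theta^0}$, the spectral sequence $\tensor*[^4]{E}{}$. But your proposal stops at the reformulation. Neither rank condition is established, and you say yourself that verifying them is the genuine obstacle; what you have is a correct reduction plus a plan, not a proof.

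Moreover, the route you sketch for the two missing steps runs into a difficulty that the paper explicitly flags as unresolved. To compute $H^2_1$ and $H^3_2$ of $(\hcC[\lambda],\Delta_{0,1})$ you propose to use $\theta_i^0\mapsto\bar\theta_i^0=\theta_i^0+2(u^j-u^i)\gamma_{ji}\theta_j^0$ as a change of generators and then rerun the iterated $\theta^1$-filtration of \cref{pro14}. That substitution is a change of basis of $\hcC$ only if the matrix $\delta_{ij}+2(u^j-u^i)\gamma_{ji}$ is invertible, and the remarks following \cref{thm:mainmain} and \cref{prop32} state that precisely this invertibility is unknown --- it is named there as the obstruction to computing the remaining off-diagonal groups. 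The paper's own argument is engineered to avoid it: in the splitting of $\ker\Delta_0^{1,1}|_{\hcC[\lambda]}$ into $F\oplus\bar G$, the combinations $\bar\theta_i^0$ occur only as multipliers of prescribed generators and are never inverted; all one needs is that $\bar\Delta$ maps $F$ into $F\oplus\bar G$ and preserves $\bar G$, with deformed de Rham differentials on the graded pieces, whose cohomology then sits in the lowest $\theta^1$-degree. Finally, even granting the computation of the two $E_1$ entries, the connecting map $\tensor*[^2]{d}{_2}$ is a zig-zag (apply $\Delta_{0,0}$, trivialize the resulting $\Delta_{0,1}$-coboundary, read off $\Delta_{0,-1}$ of the primitive), and your appeal to the non-vanishing of the weights $w_i(\m)-\tfrac32$ and of the coefficients $(u^i-u^j)\gamma_{ji}$ does not by itself control the rank of a map between spaces you have not yet determined; compare the diagonal case, where the paper computes such a $\tensor*[^2]{d}{_2}$ in the proof of \cref{resultp=d} and obtains an isomorphism only because both sides were already known explicitly. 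So there is a genuine gap at both of the remaining steps.
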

\begin{proof}
In \cref{sec:1E1} the vanishing result for $\tensor*[^1]{E}{_2}$ is proved by introducing a filtration in the degree $\deg_{\theta^1}$. In order to extend the vanishing to the case $(p,d)=(3,2)$, we split the differential $\Delta_0$ in a different way. Recall that the operator $\Delta_0$ is by definition the homogeneous component of $D_\lambda$ of degree $\deg_u$ equal to zero. It induces a differential on the first page $\tensor*[^1]{E}{_1}$ of the first spectral sequence, that is on the cohomology $H(\hcA[\lambda ], \Delta_{-1}) $ given by \cref{cohA}.

From \cref{lem13} we know that the cohomology of this complex is vanishing for $\deg_u$ positive. We can therefore limit our attention to the subcomplex with $\deg_u$ equal to zero  
\begin{equation}
\tensor*[^1]{E}{_1^0} =\hcC[\lambda]\oplus \bigoplus_{i=1}^N \frac{\hcC \llbracket \theta^{\geq 2}_i\rrbracket^\textnormal{nt} [\lambda]}{(\lambda-u^{i})\hcC \llbracket\theta^{\geq 2}_i\rrbracket^\textnormal{nt} [\lambda]}, 
\end{equation}
where the superscript in $\hcC\llbracket \theta^{\geq 2}_i\rrbracket^\textnormal{nt}$ indicates that every monomial should include at least one $\theta_i^{\geq 2}$. 

Let us denote by $\deg_{\theta^0}$ the degree that counts the number of $\theta^0_j$, $j=1, \dots , N$, and split $\Delta_0$ it its homogeneous components
\begin{equation}
\Delta_0 = \Delta_0^1 + \Delta_0^0,
\end{equation} 
where $\deg_{\theta^0} \Delta_0^k = k$. 

The decreasing filtration on $\tensor*[^1]{E}{_1^0}$ associated to the degree $\deg_\theta - \deg_{\theta^0}$ induces a spectral sequence $\tensor*[^4]{E}{}$, whose zero page is clearly $\tensor*[^1]{E}{_1^0}$, with differential $\tensor*[^4]{d}{_0} = \Delta_0^1$. The first page $\tensor*[^4]{E}{_1}$ is given by the cohomology of $(\tensor*[^1]{E}{_1^0},\Delta_0^1)$ which we now consider.

The form of $\Delta_0^1$ can be easily derived from the explicit expression of $\Delta_0$, see \cref{sec:D0}. When acting on $\tensor*[^1]{E}{_1^0}$ it simplifies to the following operator, which for simplicity we still denote $\Delta_0^1$:
\begin{equation}
\Delta_0^1 = \frac{1}{2} \sum_{i}  \tilde\theta_i^0 \sum_{s\geq1} \theta^{s+1}_i \frac{\d}{\d \theta^s_i},
\end{equation}
with 
\begin{equation}
\tilde\theta_i^0:= f^i \theta_i^0 + \sum_{j\not=i} ( u^i -u^j) \frac{f^j \d_j f^i}{f^i} \theta_j^0.
\end{equation}

We consider now the spectral sequence on $\tensor*[^1]{E}{_1^0}$ induced by the degree $\deg_{\theta^{\geq2}}$, which assigns degree one to all $\theta_i^s$ with $s\geq2$. Let $\Delta_0^1 = \Delta_0^{1,0} + \Delta_0^{1,1}$ where 
\begin{equation}
\Delta_0^{1,0} =  \frac{1}{2} \sum_{i}  \tilde\theta_i^0 \sum_{s\geq2} \theta^{s+1}_i \frac{\d}{\d \theta^s_i}, 
\quad 
\Delta_0^{1,1} =  \frac{1}{2} \sum_{i}  \tilde\theta_i^0 \theta^{2}_i \frac{\d}{\d \theta^1_i}, 
\end{equation}
are of degree $\deg_{\theta^{\geq2}} \Delta_0^{1,k} = k$.

We can rewrite our complex as 
\begin{equation}
\hcC[\lambda]\oplus \bigoplus_{i=1}^N \bigoplus_{k \geq1} \frac{\hcC\llbracket\theta^{\geq 2}_i \rrbracket^{(k)} [\lambda]}{(\lambda-u^{i})\hcC \llbracket \theta^{\geq 2}_i \rrbracket^{(k)} [\lambda]}, 
\end{equation}
where $\hcC \llbracket \theta^{\geq 2}_i\rrbracket^{(k)}$ denotes the homogeneous polynomials with $\deg_{\theta^{\geq2}}$ equal to $k$. 

Each of the summands is invariant under $\Delta_0^{1,0}$, so it forms a subcomplex whose cohomology we can compute independently. Notice that the differential vanishes on $\hcC[\lambda]$, while it acts like multiplication by $\tilde{\theta}^0_i$ on the $k=1$ subcomplex
$$
\hcC \theta^2_i \to \hcC \theta^3_i \to  \hcC \theta^4_i \to \cdots ,
$$
which is therefore acyclic except for the first term, where the cohomology is given by the kernel of the multiplication map, i.e., the ideal of $\tilde\theta_i^0$ in $\hcC$ multiplied by $\theta^2_i$. 

The first page of the spectral sequence is therefore given by 
\begin{equation} \label{firstpage}
\hcC[\lambda] 
\oplus \bigoplus_i \frac{\hcC \tilde{\theta}_i^0 \theta_i^2 [\lambda]}{(\lambda-u^i)\hcC \tilde{\theta}_i^0 \theta_i^2 [\lambda]}
\oplus \bigoplus_{k\geq 2}  \bigoplus_{i} 
H(\hcC \llbracket \theta^{\geq 2}_i\rrbracket^{(k)}, \Delta_0^{1,0}) .
\end{equation}

While it is not difficult to compute the cohomology groups appearing in the third summand, it can be easily seen that they give no contribution to  $\tensor*[^1]{E}{_2}$. Indeed, we know from \cref{lem13} that the cohomology with standard degree $d\geq4$ is a subquotient of $\hcC[\lambda]$, but the minimal degree of elements in the third summand above is $d = 5$. 

On this page the differential is induced by $\Delta_0^{1,1}$, which has  $\deg_{\theta^{\geq 2}}$ equal to one. When acting on the second summand $\hcC \tilde{\theta}_i^0 \theta_i^2$  it vanishes, since it produces a mixed term $\theta^2_i \theta^2_j$ which cannot be in $\hcC \llbracket \theta^{\geq 2}_i\rrbracket^{(k)}$ for $k\geq 2$. Therefore the cohomology of the first two summands is determined by the kernel and the image of the map
\begin{equation}
\Delta_0^{1,1}:
\hcC[\lambda] 
\to \bigoplus_i \frac{\hcC \tilde{\theta}_i^0 \theta_i^2 [\lambda]}{(\lambda-u^i)\hcC \tilde{\theta}_i^0 \theta_i^2 [\lambda]} .
\end{equation}

The image can be computed in the following way: first of all, it is clear that an element in the image is a linear combination of $\theta^2_i$, $i=1,\dots,N$, where the coefficient of each $\theta^2_i$ does not depend on $\theta^1_i$ and is in the ideal generated by $\tilde\theta_i^0$ in $\hcC$. Therefore the image is a subspace of 
\begin{equation}
\label{imageA011}
\bigoplus_{i=1}^N \frac{ \hcC_1^i \tilde{\theta}_i^0 \theta_i^2 [\lambda]}{(\lambda-u^i)},
\end{equation}
where $\hcC_1^i$ is the subspace of $\hcC$ generated by monomials that do not depend on $\theta_i^1$.
Second, it is sufficient to consider the fact that the image of the ideal
$\prod_{j\not=i} (-\lambda+u^j) \hcC[\lambda]$ under $\Delta_{0}^{1,1}$ is
\begin{equation}
\frac{\hcC_1^i \tilde{\theta}_i^0 \theta_i^2 [\lambda]}{(\lambda-u^i)}
\end{equation}
to conclude that the image of $\Delta_0^{1,1}$ is the whole space~\eqref{imageA011}.

So, the cohomology of  $\Delta_0^{1,1}$ on the second term in~\eqref{firstpage} is 
\begin{equation}
\bigoplus_{i=1}^N \frac{ \hcC_1^i \tilde{\theta}_i^0 \theta_i^1 \theta_i^2 [\lambda]}{(\lambda-u^i)}.
\end{equation}
In particular, we see that it cannot give any contribution to the cohomology of degree $(p,d)=(3,2)$. 

The second page of the spectral sequence associated to $\deg_{\theta^{\geq2}}$ is 
\begin{equation}
\label{cohA01}
\ker \Delta_0^{1,1}|_{\hcC[\lambda]} \oplus 
\bigoplus_{i} \frac{ \hcC_1^i \tilde{\theta}_i^0 \theta_i^1 \theta_i^2 [\lambda]}{(\lambda-u^i)} \oplus 
\bigoplus_{k\geq 2}  \bigoplus_{i} 
H(H(\hcC \llbracket \theta^{\geq 2}_i \rrbracket^{(k)}, \Delta_0^{1,0}), \Delta_0^{1,1}) ,
\end{equation}
where, as discussed before, the third summand does not give any contribution to $\tensor*[^1]{E}{_2}$, and can therefore be ignored here. Since $\Delta_0^1$ vanishes on this page, \cref{cohA01} gives the cohomology of $(\tensor*[^1]{E}{_1^0},  \Delta_0^1)$ which coincides with the first page $\tensor[^4]{E}{_1}$ of the spectral sequence $\tensor*[^4]{E}{}$.

The differential $\,^4d_1$ on $\,^4E_1$ is the one induced by $\Delta_0^0$, the degree $\deg_{\theta^0}$ zero part of $\Delta_0$. The three summands in \cref{cohA01} are invariant under the action of the differential $\Delta_0^0$, which in particular vanishes on the second term. To see this, observe that since the standard degree of the second term is $d=3$ and that of the third term is $d\geq5$, there can be no terms mapped between these two spaces by $\Delta_0^0$, nor from the second space to itself. The third term cannot map to the first one, since $\Delta_0^0$ cannot remove more than one $\theta^{\geq2}$. 

The operator $\Delta_0^0$ has to increase the standard degree and the $\theta$-degree by one,  while keeping $\deg_{\theta^0}$ unchanged. This can only be achieved on $\hcC[\lambda]$ by increasing $\deg_{\theta^1}$ by one, implying $\Delta_0^0 = \Delta_{0,1}$, which is given in \cref{lemmaD01}. Explicitly:
\begin{align}
\Delta_{0}^0 &= (u^i - \l )f^i   \th_i^1 \big( \frac{\d}{\d u^i} - (\d_i \log f^k) \th_k^1 \frac{\d}{\d \th_k^1}  - \frac{1}{2} (\d_i \log f^k) \th_k^0 \frac{\d}{\d \th_k^0} \big) \\
 & \qquad - \frac{1}{2} (u^j-\l ) \d_i f^j \th_j^1 \th_j^0 \frac{\d}{\d \th_i^0} + \frac{1}{2} f^i \tilde{\theta_i^0} \th_i^1 \frac{\d}{\d \th_i^0} + (u^i - \l ) f^j \frac{\d_j f^i}{f^i} \th_j^0 \th_i^1 \frac{\d}{\d \th_i^0}.
\end{align}
From this formula it is easy to see that $\Delta_0^0$ maps $\hcC[\lambda]$ to itself. Finally, from the formula for $\Delta_0$, we easily see that there are no terms that remove the dependence on $\theta^2_i$ in the second summand in \cref{cohA01}, therefore such summand cannot map to the first. 

To get the second page $\,^4E_2$ we therefore need to compute the cohomology of the differential $\Delta_0^0$ on $\ker \Delta_0^{1,1}|_{\hcC[\lambda]} $. The discussion so far was for general bidegrees $(p,d)$. However to be able to say something more we need to restrict to the subcomplex $p=d+1$. 

We see that an element proportional to $\theta^1_i$ is in the kernel of $\Delta_0^{1,1}$ if and only if it is also proportional either to $(-\lambda+u^i)$ or to $\tilde\theta^0_i$. Therefore, it can be represented as a sum over all subsets $I\subset \{1,\dots,n\}$, $|I|=t$, of the elements of the form 
$$
 \sum_{j=1}^n F^j (u,\lambda) \theta^0_j \cdot \prod_{i\in I} (-\lambda+u^i) \theta^1_i + \sum_{i\in I} G^i(u) \tilde \theta^0_i \theta^1_i \cdot \prod_{\substack{j\in I \\ j\not=i}} (-\lambda+u^j) \theta^1_j .
$$
This representation naturally splits the kernel of $\Delta_0^{1,1}$ into two summands, let us call them $F$ and $G$. 

Observe that the splitting of the $p=d+1$ part of the kernel of  $\Delta_0^{1,1}$ on $\hcC[\lambda]$ into the direct sum $F\oplus G$ defines a filtration for the operator $\Delta_0^0 = \Delta_{0,1}$.
We can see this by using the base change \( \Psi \). First, define
\begin{equation}
\bar{\th}_i^0 \coloneqq \Psi^{-1} \tilde{\th}_i^0 = \th_i^0 +2 (u^j - u^i ) \gamma_{ji} \th_j^0
\end{equation}
From the formula above for $\Delta_0^0$ we have that we can write \( \Delta_0^0 = \Psi \bar{\Delta} \Psi^{-1} \), for
\begin{align}
\bar{\Delta} &= (u^i - \l ) \th_i^1 \frac{\d}{\d u^i} + (u^i - \l ) \gamma_{ji} \th_i^1 \th_i^0 \frac{\d}{\d \th_j^0} - (u^i -\l ) \gamma_{ji} \th_i^1 \th_j^0 \frac{\d}{\d \th_i^0} + \frac{1}{2} \bar{\th}_i^0 \th_i^1 \frac{\d}{\d \th_i^0}
\end{align}
The first three terms preserve \( F = \Psi^{-1}F\), while the last sends \( F \) to \( \bar{G} \coloneqq \Psi^{-1}G\). Moreover, the entire operator preserves \( \bar{G} \). Furthermore, the parts \( F \rightarrow F \) and \( \bar{G} \rightarrow \bar{G} \) form deformed de Rham differentials \( d + A \). Therefore, the only possible cohomology is in the lowest degree in $\theta^1_\bullet$, which is zero for $F$ and $1$ for $G$. So, only nontrivial cohomology in the case $p=d+1$ is possible in the degree $(t+1,t)=(1,0)$ and $(t+1,t)=(2,1)$. This implies the the cohomology of degree $(3,2)$ is equal to zero.
\end{proof}


\begin{remark}\label{DeltapresG}
Note that it is not directly clear from the definitions that  $\bar{\Delta} \bar{G} \subset \bar{G}$. However, we know that \( \bar{\Delta} \) must preserve the kernel of $\Delta_0^{1,1}$ twisted by \( \Psi \), which is \( F \oplus \bar{G} \). Moreover, looking at the \( \lambda \)-degree, we see that for elements of \( \bar{G}\) it is one less \( \deg_{\th^1} \) while for elements of \( F\) it is at least \( \deg_{\th^1} \). As \( \deg_{\th^1} \bar{\Delta} =1 \), and none of its terms increase the \( \lambda\)-degree by more than \( 1\), this proves that \( \bar{\Delta} \) cannot map \( \bar{G} \) outside of \( \bar{G} \). A more direct proof requires Ferapontov's flatness equations for $f^i$\cite{Ferapontov01}. 
We give this calculation in \cref{sec:D0}.\par
\end{remark}

\begin{remark}
In the proof, we restricted to  \( p = d+1\). In order to extend the argument, one would have to show that the transformation \( \th_i^0 \mapsto \bar{\th}_i^0 \) is invertible. This would allow for a splitting similar to the splitting in \( F\) and \( G\) here.
\end{remark}

\section{Proofs of the main theorems}
\label{cohomologyA}
In this section we collect all results from the rest of the paper to compute the cohomology of the complex $(\hcA[\lambda], D_\lambda)$, proving \cref{resultp=d,extravanishing}.

\begin{proof}[Proof of \cref{resultp=d}]
As observed in \cref{D01}, the first page $\tensor*[^2]{E}{_1}$ is given by the direct sum~\eqref{2E1}. From \cref{lem13,pro14} we get
\begin{equation}
(\tensor*[^2]{E}{_1})^p_p \cong \begin{cases}
\R [\l ] & p = 0,\\
\bigoplus_{i=1}^N C^\infty (u^i) \th_i^1 & p = 1,\\
\bigoplus_{i=1}^N C^\infty (u^i) \th_i^0 \th_i^2 & p =2,\\
\bigoplus_{i=1}^N C^\infty (u^i) \th_i^0 \th_i^1 \th_i^2 & p = 3,\\
0 & \text{else}.
\end{cases}
\end{equation}

On this first page, the differential $\tensor*[^2]{d}{_1}$ must lower the spectral sequence degree $\deg_{\theta^1} - \deg_{\theta}$ by one, in other words, since the differential must still be of bidegree $(1,1)$, it must leave the degree $\deg_{\theta^1}$ unchanged, which is impossible on this subcomplex. Hence, the differential \( \tensor*[^2]{d}{_1} \) is equal to zero, and \( (\tensor*[^2]{E}{_2})_p^p \cong (\tensor*[^2]{E}{_1})_p^p \).

On the second page, the differential $\tensor*[^2]{d}{_2}$ must lower the spectral sequence degree by two, i.e., it must be of degree $\deg_{\theta^1}$ equal to $-1$. Therefore, on this subcomplex the differential can only be non-trivial between \( p =1\) and \( p =2\). Looking back at the formula for \( \Delta_0 \), one can easily identify the terms of degree $\deg_{\theta_1} = -1$, which give
\begin{align}
\Delta_{0,-1}= \sum_{i} \frac{1}{2}  \Big[ \sum_j(u^j-\l ) \big( \d_i f^j \th_j^0 \th_j^2 +f^j \frac{\d_j f^i}{f^i} (\th_i^0 \th_j^2 - \th_j^0 \th_i^2) \big)+  f^i \th_i^0 \th_i^2 \Big] \frac{\d}{\d \th_i^1} .
\end{align}

$\Delta_{0,-1}$ induces an operator on $H(\hcA[\lambda], \Delta_{-1})$. Since we are interested only in the differential at degree $p=1$, we need to consider just the action of such operator on $\hcC[\lambda]$, which is, taking into account the identification~\eqref{ide}
\begin{equation}
\sum_i \frac12 \Big[ (u^i-u^j) \frac{f^j}{f^i} \partial_j f^i \theta^0_j \theta^2_i + f^i \theta^0_i \theta^2_i \Big] \frac{\d}{\d \th_i^1} .
\end{equation}
The image of $\hcC[\lambda]$ through this operator is thus in $\bigoplus_i H(\hat{d}_i(\hcC_i) , \cD_i)$, where the first term, being in $\hcC^i_{0,1} \theta_i^2$ vanishes. 
Hence, the only surviving term is \( \frac{1}{2} f^i \th_i^0 \th_i^2 \frac{\d}{\d \th_i^1}\), which gives an isomorphism \( \tensor*[^2]{d}{_2} : (\tensor*[^2]{E}{_2})_1^1 \rightarrow (\tensor*[^2]{E}{_2})_2^2 \). 

The differential is therefore zero on $(\tensor*[^2]{E}{_2})^p_p$ for $p\not=1$ and an isomorphism for $p=1$, so \( (\tensor*[^2]{E}{_3})_p^p \) is zero unless $p=0$ or $p=3$, when it is equal to $(\tensor*[^2]{E}{_2})_p^p$. This spectral sequence has no other non-trivial differentials, so \( (\tensor*[^2]{E}{_\infty} )_p^p \) has the same form. As \( \tensor*[^2]{E}{} \Longrightarrow \,\tensor*[^1]{E}{_2} \), we get that \( (\tensor*[^1]{E}{_2})_p^p \) is of this form as well. Because all differentials must have \( (p,d)\)-bidegree \( (1,1)\), there can be no higher non-trivial differentials on this part of the first spectral sequence. Now, \( \tensor*[^1]{E}{} \Longrightarrow H(\hcA [\l ], D_\l )\), yielding the result.

\end{proof}

\begin{proof}[Proof of \cref{extravanishing}]
We take \cref{vanth} as a starting point. Then the extra vanishing at degrees $d=N, N+1$ follows from \cref{lem13}, and the vanishing at $(3,2)$ follows from \cref{prop32}.
\end{proof}

\appendix
\section{Formula for and calculations with \texorpdfstring{$\Delta_0$}{D0}}
\label{sec:D0}
We recall from~\cite{cps15} the formula for the degree $\deg_u$ zero part of the operator $D_\lambda$.
\begin{align} \label{eq:Delta0}
\Delta_0 & = 
(-\lambda+u^i)f^i\theta_i^{1}\frac{\d}{\d u^i} 
\\ \notag &
+  \! \!
\sum_{\substack{
		s=a+b \\
		s, a \geq 1;
		b\geq 0
	}} \! \! (-\lambda+u^i) \binom {s}{b} \partial_j f^i u^{j,a} \theta_i^{1+b} \frac{\d}{\d u^{i,s}}
	+  \! \! \sum_{\substack{
			s=a+b \\
			s, a \geq 1;
			b\geq 0
		}}  \! \! \binom {s}{b} f^i u^{i,a} \theta_i^{1+b} \frac{\d}{\d u^{i,s}}
		\\ \notag
		& +  \! \! \frac{1}{2}\sum_{\substack{
				s=a+b \\
				s \geq 1;
				a,b\geq 0
			}}   \! \! (-\lambda+u^i) \binom{s}{b}
			\d_jf^i u^{j,1+a} \theta_i^b
			\frac{\d}{\d u^{i,s}}
			+\frac{1}{2}  \! \! \sum_{\substack{
					s=a+b \\
					s \geq 1;
					a,b\geq 0
				}}  \! \! \binom{s}{b}
				f^i u^{i,1+a} \theta_i^b
				\frac{\d}{\d u^{i,s}}
				\\ \notag
				& +\frac{1}{2}  \! \! \sum_{\substack{
						s=a+b \\
						s \geq 1;
						a,b\geq 0
					}}  \! \! (-\lambda+u^i) \binom{s}{b}  
					f^i\frac{\d_i f^j}{f^j} u^{j,1+a}\theta_j^b 
					\frac{\d}{\d u^{i,s}}
					+\frac{1}{2}  \! \! \sum_{\substack{
							s=a+b \\
							s \geq 1;
							a,b\geq 0
						}}  \! \! \binom{s}{b}  
						f^i u^{i,1+a}\theta_i^b 
						\frac{\d}{\d u^{i,s}}
						\\ \notag
						& -\frac{1}{2}  \! \! \sum_{\substack{
								s=a+b \\
								s \geq 1;
								a,b\geq 0
							}}  \! \!
							(-\lambda+u^j) \binom{s}{b} 
							f^j\frac{\d_j f^i}{f^i} u^{i,1+a}\theta_j^b
							\frac{\d}{\d u^{i,s}}
							-\frac{1}{2}  \! \! \sum_{\substack{
									s=a+b \\
									s \geq 1;
									a,b\geq 0
								}}  \! \!
								\binom{s}{b} 
								f^i u^{i,1+a}\theta_i^b
								\frac{\d}{\d u^{i,s}}
								\\ \notag
								& + \frac{1}{2} \sum_{\substack{
										s=a+b \\
										s,a,b\geq 0
									}}  
									(-\lambda+u^j)\binom{s}{b}
									\d_if^j  \theta_j^a \theta_j^{1+b} 
									\frac{\d}{\d \theta_i^s}
									+ \frac{1}{2} \sum_{\substack{
											s=a+b \\
											s,a,b\geq 0
										}}  
										\binom{s}{b}
										f^i  \theta_i^a \theta_i^{1+b} 
										\frac{\d}{\d \theta_i^s}
										\\ \notag
										&
										+ \frac{1}{2} \sum_{\substack{
												s=a+b \\
												s,a,b\geq 0
											}}  
											(-\lambda+u^j)\binom{s}{b}
											f^j\frac{\d_j f^i}{f^i} \theta_i^a \theta_j^{1+b}   \frac{\d}{\d \theta_i^s}
											+\frac{1}{2} \sum_{\substack{
													s=a+b \\
													s,a,b\geq 0
												}}  
												\binom{s}{b}
												f^i \theta_i^a \theta_i^{1+b}   \frac{\d}{\d \theta_i^s}
												\\ \notag
												&
												- 
												\frac{1}{2} \sum_{\substack{
														s=a+b \\
														s,a,b\geq 0
													}}  
													(-\lambda+u^j)\binom{s}{b}
													f^j\frac{\d_j f^i}{f^i} \theta_j^a \theta_i^{1+b}
													\frac{\d}{\d \theta_i^s}
													- 
													\frac{1}{2} \sum_{\substack{
															s=a+b \\
															s,a,b\geq 0
														}}  
														\binom{s}{b}
														f^i \theta_i^a \theta_i^{1+b}
														\frac{\d}{\d \theta_i^s} .														
\end{align}

The direct proof that \( \bar{\Delta} \bar{G} \subset \bar{G} \) in \cref{prop32} is given below. Recall that its validity is deduced more abstractly in \cref{DeltapresG} as well.\par
\begin{lemma}
The operator \( \bar{\Delta} \) preserves \( \bar{G} \), where
\begin{align}
\bar{\Delta} &= (u^i - \l ) \th_i^1 \frac{\d}{\d u^i} + (u^i - \l ) \gamma_{ji} \th_i^1 \th_i^0 \frac{\d}{\d \th_j^0} - (u^i -\l ) \gamma_{ji} \th_i^1 \th_j^0 \frac{\d}{\d \th_i^0} + \frac{1}{2} \bar{\th}_i^0 \th_i^1 \frac{\d}{\d \th_i^0}
\end{align}
and 
\begin{align}
\bar{G} = \bigoplus_{i=1}^N C^\infty (U) \Big[ \big\{ (u^j -\l ) \th_j^1\big\}_{j=1}^N \Big] \bar{\th}_i^0 \th_i^1
\end{align}
\end{lemma}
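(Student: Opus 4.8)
The plan is to exploit that $\bar\Delta$ is an odd derivation of the Grassmann algebra and that $\bar G$ carries a very rigid module structure. Set $\eta_j := (u^j-\l)\th_j^1$, so that $\bar G$ is exactly the $C^\infty(U)[\{\eta_j\}_{j=1}^N]$-submodule of $\hcC[\l]$ generated by the even elements $\bar{\th}_i^0\th_i^1$, $i=1,\dots,N$. The first thing I would record are three elementary facts: $\bar\Delta\th_i^1=0$ (there is no $\partial/\partial\th_i^1$ in $\bar\Delta$), $\bar\Delta u^i=\eta_i$, and hence, by the graded Leibniz rule, $\bar\Delta\eta_j=0$ and $\bar\Delta g=\sum_k(\partial_k g)\eta_k$ for $g\in C^\infty(U)$. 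In particular $\bar\Delta$ preserves the coefficient ring $C^\infty(U)[\{\eta_j\}]$.

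Next I would reduce the statement to a single computation. Because $\bar\Delta\eta_j=0$ and $\bar G$ is a module over $C^\infty(U)[\{\eta_j\}]$, applying $\bar\Delta$ to a generator $g\,\eta_{j_1}\cdots\eta_{j_r}\,\bar{\th}_i^0\th_i^1$ and using Leibniz leaves, up to an irrelevant sign and up to a manifestly admissible term coming from $\bar\Delta(g\,\eta_{j_1}\cdots\eta_{j_r})$, only the factor $\bar\Delta(\bar{\th}_i^0\th_i^1)$ to control. Since $\bar\Delta\th_i^1=0$, this equals $\bar\Delta(\bar{\th}_i^0)\,\th_i^1$, so the whole lemma reduces to showing $\bar\Delta(\bar{\th}_i^0)\,\th_i^1\in\bar G$ for each fixed $i$.

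To organise this computation I would rewrite the differential by collecting the leftmost $\th^1$, writing $\bar\Delta=\sum_k\th_k^1 Y_k$ with
\begin{equation}
Y_k=(u^k-\l)\Big(\frac{\partial}{\partial u^k}+\sum_j\gamma_{jk}\th_k^0\frac{\partial}{\partial\th_j^0}-\sum_j\gamma_{jk}\th_j^0\frac{\partial}{\partial\th_k^0}\Big)-\tfrac12\bar{\th}_k^0\frac{\partial}{\partial\th_k^0},
\end{equation}
the sign in the last summand coming from moving $\th_k^1$ past $\bar{\th}_k^0$. Then $\bar\Delta(\bar{\th}_i^0)\th_i^1=\sum_k\th_k^1 Y_k(\bar{\th}_i^0)\th_i^1$, and the $k=i$ term vanishes automatically because $Y_i(\bar{\th}_i^0)$ is linear in the $\th^0_p$ and is squeezed between two copies of $\th_i^1$. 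For $k\neq i$ a short calculation gives $Y_k(\bar{\th}_i^0)=(u^k-\l)\big(\partial_k\bar{\th}_i^0+\Gamma_k\bar{\th}_i^0\big)-(u^k-u^i)\gamma_{ki}\,\bar{\th}_k^0$, with $\Gamma_k$ the rotation-coefficient connection appearing above. Sandwiching with $\th_k^1(\cdot)\th_i^1$ turns the prefactor $(u^k-\l)\th_k^1$ into $\eta_k$, so the first part already carries an explicit $\eta_k$, while the last term produces $(u^k-u^i)\gamma_{ki}\,\bar{\th}_k^0\th_k^1\th_i^1$.

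The remaining, and genuinely hard, step is to show that this sum lies in the span of the generators $\bar{\th}_q^0\th_q^1\eta_j$. I would expand everything in the Grassmann basis $\th_p^0\th_a^1\th_b^1$ and match coefficients against $\sum_{q,j}c_{qj}(u)\bar{\th}_q^0\th_q^1\eta_j$; the $\l$-degree then forces $(u^k-\l)$-factors to pair with $\eta_k$ and $(u^i-\l)$-factors with $\eta_i$, so that the purely $\l$-independent piece $(u^k-u^i)\gamma_{ki}\bar{\th}_k^0\th_k^1\th_i^1$ must be matched by cancellation of the $\l$-linear parts of several $\eta$-terms. This is exactly the place where Ferapontov's equations enter: \eqref{eq:dgammaijk} absorbs the genuinely off-diagonal derivatives $\partial_k\gamma_{mi}$ with three distinct indices, whereas \eqref{dgammaij} and the $u$-weighted equation \eqref{udgamma} are needed to handle the diagonal index cases $m\in\{i,k\}$ together with the contribution of the $-\tfrac12\bar{\th}_k^0\,\partial/\partial\th_k^0$ term, reconciling the three factors $(u^k-\l)$, $(u^i-\l)$ and $(u^k-u^i)$ so that the $\l$-dependence reassembles into honest $\eta_j$'s. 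I expect the main obstacle to be bookkeeping rather than conceptual: keeping track of the signs from reordering the odd variables and correctly isolating the diagonal index contributions, after which the three Ferapontov identities deliver the required recombination.
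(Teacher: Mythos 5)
Your setup is sound and follows the same strategy as the paper's appendix proof: the observations \( \bar\Delta\th_i^1=0 \), \( \bar\Delta\eta_j=0 \), \( \bar\Delta g=\sum_k(\partial_k g)\,\eta_k \) correctly reduce the lemma to showing \( \bar\Delta(\bar\th_i^0)\,\th_i^1\in\bar G \); the rewriting \( \bar\Delta=\sum_k\th_k^1 Y_k \) (with the sign in the last summand of \( Y_k \)) is right; the \( k=i \) term does vanish; your expression for \( Y_k(\bar\th_i^0) \), \( k\neq i \), agrees with direct expansion; and you even assign the three Ferapontov identities their correct roles. The problem is that you stop exactly where the content of the lemma begins. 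The assertion that \( \sum_{k\neq i}\th_k^1\,Y_k(\bar\th_i^0)\,\th_i^1 \) lies in \( \bar G \) \emph{is} the lemma --- everything before it is routine --- and ``I would expand and match coefficients; I expect the obstacle to be bookkeeping'' is a plan, not a proof. The paper's proof consists precisely of carrying this computation out.

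Moreover, your anticipated mechanism is not quite what happens, which is why the step cannot be waved through. Writing \( (u^k-u^i)=(u^k-\l)-(u^i-\l) \) in the term \( -(u^k-u^i)\gamma_{ki}\bar\th_k^0 \) splits it into \( \mp\gamma_{ki}\,\eta_i\,\bar\th_k^0\th_k^1 \), which is already in \( \bar G \) (and survives to the final answer), plus a piece proportional to \( \gamma_{ki}(u^k-\l)\th_k^1\,\bar\th_k^0\,\th_i^1 \), which is \emph{not} in \( \bar G \) and must cancel \emph{identically} against the \( (u^k-\l)\bigl(\partial_k\bar\th_i^0+\Gamma_k\bar\th_i^0\bigr) \) part; there is no recombination of \( \l \)-linear pieces into new \( \eta_j \)'s. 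Concretely: \eqref{eq:dgammaijk} turns the derivatives \( \partial_k\gamma_{li} \) with \( l\neq i,k \) into \( \gamma_{lk}\gamma_{ki} \), and these merge with the \( \th_l^0 \)-part of \( \gamma_{ki}\bar\th_k^0 \); the leftover coefficient of \( \th_k^0 \) is then
\( (\gamma_{ik}+\gamma_{ki})+2(u^k-u^i)\partial_k\gamma_{ki}+2\sum_{l\neq i,k}(u^l-u^i)\gamma_{lk}\gamma_{li} \),
and one needs \eqref{udgamma} combined with \eqref{dgammaij} to see that this vanishes. Only after exhibiting these cancellations --- which yield
\( \bar\Delta\bigl(G\,\bar\th_i^0\th_i^1\bigr)=\sum_k\partial_kG\,\bar\th_i^0\th_i^1\,(u^k-\l)\th_k^1-\sum_kG\,\gamma_{ki}\,(u^i-\l)\th_i^1\,\bar\th_k^0\th_k^1 \),
both summands manifestly in \( \bar G \) --- is the lemma actually proved.
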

\begin{proof}
When calculating the action of \( \bar{\Delta} \) on an element of the form \( G(u) \bar{\th}^0_i \th^1_i \in \bar{G}\), we get the following (where \( i\) is a fixed index, and \( k\), \( l\), and \( m\) are summed over)
\begin{align}
\bar{\Delta}G(u) \bar{\th}^0_i \th^1_i &= \frac{\d}{\d u^k} \big( G (\th_i^0 + 2(u^l - u^i ) \gamma_{li} \th_l^0 )\big) \th_i^1  (u^k-\l )\th_k^1 \\
 & \quad + G \gamma_{mk} \th_k^0 \frac{\d}{\d \th_m^0} \big( \th_i^0 + 2 (u^l - u^i )\gamma_{li} \th_l^0\big) \th_i^1  (u^k-\l )\th_k^1 \\ 
&\quad - G \gamma_{lk} \th_l^0   \frac{\d}{\d \th_k^0} (\th_i^0 + 2(u^m - u^i ) \gamma_{mi} \th_l^0 ) \th^1_i (u^k -\l ) \th_k^1\\
& \quad +\frac{1}{2}  G \frac{\d}{\d \th_k^0}\big( \th_i^0 + 2(u^l - u^i ) \gamma_{li} \th_l^0 \big) \th_i^1 \bar{\th}_k^0 \th_k^1 \displaybreak[0] \\ 
&= \frac{\d G}{\d u^k} \bar{\th}_i^0 \th_i^1  (u^k - \l ) \th_k^1 + 2 G \gamma_{ki} \th_k^0 \th_i^1 (u^k - \l )\th_k^1\\
&\quad + 2 G (u^l - u^i) \d_k \gamma_{li} \th_l^0 \th_i^1(u^k-\l )\th_k^1 + G\gamma_{ik} \th_k^0 \th_i^1 (u^k -\l )\th_k^1 \\
&\quad + 2G (u^l-u^i) \gamma_{lk} \gamma_{li} \th_k^0 \th_i^1 (u^k -\l )\th_k^1 \\
&\quad- 2 G (u^k-u^i) \gamma_{lk} \gamma_{ki} \th_l^0 \th_i^1 (u^k-\l )\th_k^1 + G (u^k- u^i) \gamma_{ki} \th_i^1 \bar{\th}_k^0 \th_k^1
\end{align}
Using \cref{eq:dgammaijk} for the third term if \( i,k,l\) distinct, that part of the third term adds up to the sixth term.
\begin{align}
\bar{\Delta}G(u) \bar{\th}^0_i \th^1_i &= \frac{\d G}{\d u^k} \bar{\th}_i^0 \th_i^1  (u^k - \l ) \th_k^1 + 2 G \gamma_{ki} \th_k^0 \th_i^1 (u^k - \l )\th_k^1\\
&\quad + 2 G (u^k - u^i) \d_k \gamma_{ki} \th_k^0 \th_i^1(u^k-\l )\th_k^1 + G\gamma_{ik} \th_k^0 \th_i^1 (u^k -\l )\th_k^1 \\
&\quad + 2 G (u^l-u^i) \gamma_{lk} \gamma_{li} \th_k^0 \th_i^1 (u^k -\l )\th_k^1 + G ( u^i-\l ) \gamma_{ki} \bar{\th}_k^0 \th_i^1 \th_k^1\\
&\quad + 2 G (u^l-u^k) \gamma_{lk} \gamma_{ki} \th_l^0 \th_i^1 (u^k-\l )\th_k^1 - G \gamma_{ki} \bar{\th}_k^0 \th_i^1 (u^k- \l ) \th_k^1
\end{align}
By the definition of \( \bar{\th}_k^0 \), the last two terms drop out against half of the second term. So we get
\begin{align}
\bar{\Delta}G(u) \bar{\th}^0_i \th^1_i &= \frac{\d G}{\d u^k} \bar{\th}_i^0 \th_i^1  (u^k - \l ) \th_k^1 + G (\gamma_{ik} + \gamma_{ki}) \th_k^0 \th_i^1 (u^k - \l )\th_k^1\\
&\quad + 2 G (u^k - u^i) \d_k \gamma_{ki} \th_k^0 \th_i^1(u^k-\l )\th_k^1 \\
&\quad + 2 G (u^l-u^i) \gamma_{lk} \gamma_{li} \th_k^0 \th_i^1 (u^k -\l )\th_k^1 + G ( u^i-\l ) \gamma_{ki} \bar{\th}_k^0 \th_i^1 \th_k^1
\end{align}
By \cref{udgamma}, we get
\begin{align}
\bar{\Delta}G(u) \bar{\th}^0_i \th^1_i &= \frac{\d G}{\d u^k} \bar{\th}_i^0 \th_i^1  (u^k - \l ) \th_k^1 - G u^i\d_i \gamma_{ik} \th_k^0 \th_i^1 (u^k - \l )\th_k^1\\
&\quad - 2 G u^i \d_k \gamma_{ki} \th_k^0 \th_i^1(u^k-\l )\th_k^1 \\
&\quad - 2 G u^i \gamma_{lk} \gamma_{li} \th_k^0 \th_i^1 (u^k -\l )\th_k^1 - G  \gamma_{ki} ( u^i-\l ) \th_i^1 \bar{\th}_k^0 \th_k^1
\end{align}
Applying \cref{dgammaij} gives
\begin{align}
\bar{\Delta}G(u) \bar{\th}^0_i \th^1_i &= \frac{\d G}{\d u^k} \bar{\th}_i^0 \th_i^1  (u^k - \l ) \th_k^1 - G  \gamma_{ki} ( u^i-\l ) \th_i^1 \bar{\th}_k^0 \th_k^1
\end{align}
Multiplying with a factor \( \prod_{j \in I} (u^j - \l ) \th_j^1 \) does not change the calculation, so we can extend this calculation to all of \( \bar{G}\), showing that \( \bar{\Delta} \) does indeed preserve this space.
\end{proof}

\printbibliography

\end{document}